\newtheorem{theorem}{Theorem}
\newtheorem{corollary}[theorem]{Corollary}
\newtheorem{lemma}[theorem]{Lemma}
\newtheorem{proposition}[theorem]{Proposition}
\newtheorem*{remark}{Remark}
\title{On the regularity of fractional integrals of \\ modular forms.}
\author{Carlos Pastor}
\date{}
\let\OLDthebibliography\thebibliography
\renewcommand\thebibliography[1]{
  \OLDthebibliography{#1}
  \setlength{\parskip}{1pt}
  \setlength{\itemsep}{1pt plus 0.3ex}
}
\DeclareMathOperator{\slgroup}{SL}
\DeclareMathOperator{\glgroup}{GL}
\def\CC{{C\nolinebreak[4]\hspace{-.05em}\raisebox{.4ex}{\tiny\bf ++}}}
\begin{document}

\maketitle

\begin{abstract}
In this paper we study some local and global regularity properties of Fourier series obtained as fractional integrals of modular forms. In particular we characterize the differentiability at rational points, determine their Hölder exponent everywhere (using several definitions) and compute the associated spectrum of singularities. We also show that these functions satisfy an approximate functional equation, and use it to discuss the graphs of ``Riemann's example'' and of fractional integrals of cusp forms for $\Gamma_0(N)$. We include some computer plots.
\end{abstract}

\section{Introduction}\label{sec:introd}

The function
\begin{equation}\label{eq:riemann}
\varphi(x) = \sum_{n \geq 1} \frac{\sin(n^2 \pi x)}{n^2}
\end{equation}
was introduced in \cite{weierstrass} by Weierstrass as an example supposedly given by Riemann of a continuous function which is not differentiable anywhere. It was later verified by Hardy \cite{hardy} that this is indeed the case except perhaps at the rational numbers of the form $\text{odd}/\text{odd}$ or $\text{even}/(4n+1)$. The behavior at the remaining points was not known until 1970, when Gerver proved in \cite{gerver, gerver2} that $\varphi$ is actually differentiable at those rationals of the form $\text{odd}/\text{odd}$ while is not in the other case (these assertions for small denominator are apparent from its graph when plotted with the aid of modern computers; see figure~\ref{fig:riemann}). 

\begin{figure}\label{fig:riemann}
\centering
\includegraphics[width=0.9\textwidth]{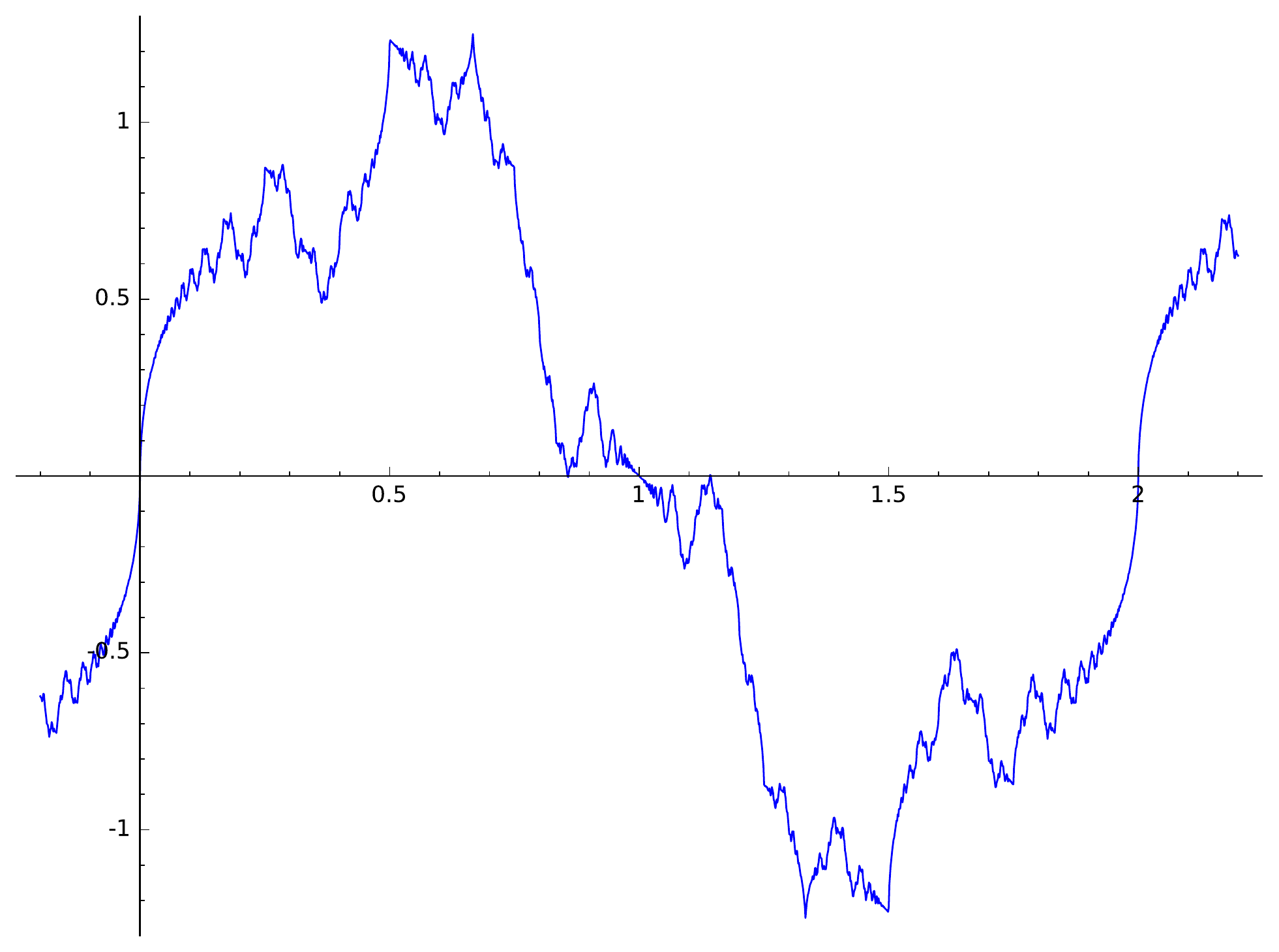}
\caption{Riemann's example.}
\end{figure}

In the light of historical analysis \cite{butzer_stark} it seems probable that Riemann never made such a claim. In spite of this, $\varphi$ has become known in the literature as ``Riemann's example'', and its regularity has been extensively studied by several authors. What lies underneath its apparently chaotic behavior is the action of a certain subgroup of $\slgroup_2(\mathbb{Z})$ on Jacobi's theta function
\[\theta(z) = \sum_{n \in \mathbb{Z}} e^{n^2 \pi i z} \qquad (\Im(z) > 0),\]
which is a modular form of weight $1/2$. The connection can be formally stated as $2\pi^{-1}\varphi'(x) = \Re\theta(x) - 1$. This leads to different fruitful strategies that can be used to study $\varphi$: for example, in \cite{holschneider_tchamitchian} and \cite{jaffard} the derivative in the left hand side is understood as a certain wavelet transform, and general theorems are applied which relate bounds on one side of the transform with regularity on the other; while in \cite{duistermaat} an approximate functional equation for $\varphi$ is deduced integrating the one for $\theta$.

In the same spirit, if we are given any modular form of positive weight for a subgroup of finite index of $\slgroup_2(\mathbb{Z})$ one can perform a formal fractional integration on its Fourier series, obtaining a new series which converges to a continuous function on the whole real line. Several aspects concerning the regularity of the resulting function have been studied for certain limited ranges in \cite{chamizo} and more recently \cite{chamizo_petrykiewicz_ruiz}. The so called pointwise Hölder exponent, for example, which measures how well a function can be locally approximated by polynomials, was determined by Chamizo, Petrykiewicz and Ruiz-Cabello under very restrictive hypothesis in \cite{chamizo_petrykiewicz_ruiz}.

In this paper we intertwine the wavelet transform and approximate functional equation approaches in order to study the pointwise Hölder exponent and related notions under general hypothesis, achieving a more complete understanding of the regularity of these functions. We also determine the differentiability at the rational points, and the spectrum of singularities, which consists of the mapping associating to each of the sets where the function attains a particular Hölder exponent its Hausdorff dimension. To compute the latter a version of the Jarník-Besicovith theorem on Diophantine approximation adapted to the set of rational numbers where the modular form is not cuspidal is needed. For the precise definitions see \S\ref{sec:results}.

As noted by Duistermaat in \cite{duistermaat}, the approximate functional equation also deepens our understanding of the graph of these functions. In particular it shows that around every rational number we should expect oscillations of the form $x^a g(1/x)$, where $g$, which depends on the rational number, is a periodic function also given by a fractional integral of a modular form (these oscillations are clearly visible in figure~\ref{fig:riemann}). Other features of these graphs are also unveiled by the approximate functional equation, such as non-differentiable singularities of the form $x^a$ or $x^a\log{x}$, and self-similarity around quadratic surds and certain rational numbers. To illustrate these concepts we perform a detailed analysis of ``Riemann's example'', recovering several results that appear in the literature.

We also consider the case of cusp forms for the group $\Gamma_0(N)$, that is, the subgroup of $\slgroup_2(\mathbb{Z})$ consisting of those matrices whose bottom-left entry is divisible by $N$. These modular forms are relevant because of the relation with elliptic curves (and other abelian varieties) via the acclaimed modularity theorem. It turns out that the question of determining around which rational numbers the aforementioned self-similarity takes place is linked to understanding when the normalizer of $\Gamma_0(N)$ acts transitively on $\mathbb{Q}$. We characterize the latter and provide sufficient conditions for the former. This is one more example of the fact that some algebro-geometric properties affect the aspect of these fractional integrals. For example, in \cite{chamizo} it is shown that under some circumstances the derivative at $0$ of these functions can be related to the rank of the associated elliptic curve.

The layout of this paper is the following: in \S\ref{sec:results} we formally define all the necessary concepts and state our main results. \S\ref{sec:lemmas} contains some preliminary lemmas, while \S\ref{sec:approx_eq} and \S\ref{sec:wavelet} are devoted to developing the main tools: the approximate functional equation and the wavelet transform. In \S\ref{sec:proofs} we focus on determining several Hölder exponents, while \S\ref{sec:spectrum} deals with the spectrum of singularities. Finally, in \S\ref{sec:riemann} and \S\ref{sec:gamma0} the previous theory is applied to ``Riemann's example'' and cusp forms for $\Gamma_0(N)$, respectively. These last sections also include computer-generated images.

\section{Notation and main results}\label{sec:results}

As it is customary, the notation $f(x) \ll g(x)$, or $f = O(g)$, will be employed to denote that the inequality $|f(x)| \leq C |g(x)|$ is satisfied for some unspecified positive constant $C$, usually as $x$ converges to a certain point.

We introduce the following spaces of complex-valued functions, defined either in all $\mathbb{R}$ or in an open subset of $\mathbb{R}$, to classify them according to their regularity:

\begin{itemize}
\item
For $0 \leq s \leq 1$ we define $\Lambda^s(x_0)$ as the set of all continuous functions which satisfy a $s$-Hölder condition at $x_0$, \emph{i.e},
\[|f(x) - f(x_0)| \ll |x-x_0|^s \qquad (x \rightarrow x_0).\]
We analogously define $\Lambda^s(\Omega)$ for a subset $\Omega \subset \mathbb{R}$ as the set of all continuous functions satisfying a uniform $s$-Hölder condition on $\Omega$.
\item
For any $s \geq 0$ we define $\mathcal{C}^s(x_0)$ as the set of all continuous functions for which there is some polynomial $P$ of degree less than $s$ satisfying
\[|f(x) - P(x-x_0)| \ll |x-x_0|^s \qquad (x \rightarrow x_0).\]
Note that for $0 \leq s \leq 1$ the spaces $\mathcal{C}^s(x_0)$ and $\Lambda^s(x_0)$ coincide.
\item
For any $0 \leq s \leq 1$ and any integer $k \geq 0$ we define $\mathcal{C}^{k, s}(x_0)$ as the set of all continuous functions for which $f^{(k)}$ exists in an interval $I$ containing $x_0$ and verifies $f^{(k)} \in \Lambda^s(x_0)$. Analogously one defines $\mathcal{C}^{k, s}(\Omega)$ for an open set $\Omega \subset \mathbb{R}$ as the set of all continuous functions for which $f^{(k)}$ exists in $\Omega$ and $f^{(k)} \in \Lambda^s(K)$ for every compact subset $K \subset \Omega$.
\end{itemize}

Finally we also define the spaces $\Lambda^s_{\log}$, $\mathcal{C}^s_{\log}$ and $\mathcal{C}^{k, s}_{\log}$ by replacing $|x-x_0|^s$ in the previous definitions with $|x-x_0|^s\log|x-x_0|$.

Motivated by the articles \cite{chamizo_petrykiewicz_ruiz, seuret_vehel}, we choose the following Hölder exponents as measures of the local regularity of a function $f$ at a certain point:
\begin{align*}
\beta(x_0) &:= \sup \{s : f \in \mathcal{C}^s(x_0)\}, \\
\beta^{*}(x_0) &:= \sup \{k+s : f \in \mathcal{C}^{k, s}(x_0)\}, \\
\beta^{**}(x_0) &:= \lim_{I \rightarrow \{x_0\}} \sup \{k+s : f \in \mathcal{C}^{k, s}(I)\}.
\end{align*}
In the last definition the limit is taken as $I$ runs over a sequence of nested open intervals whose intersection is $\{x_0\}$.

The first exponent, $\beta(x_0)$, also called the pointwise Hölder exponent, is the most local in nature and gives precise information about how well the function can be approximated by a polynomial in arbitrarily small neighborhoods of $x_0$, even when no derivative exists near that point (note that $P$ in the definition of $\mathcal{C}^s(x_0)$ need not be the Taylor polynomial).

The second one, $\beta^{*}(x_0)$, also called the restricted local Hölder exponent, is more demanding in the sense that $f$ must be differentiable enough times for it to coincide with $\beta(x_0)$. This is in some sense like imposing that the polynomial is the actual Taylor polynomial of $f$.

Finally, $\beta^{**}(x_0)$, the local Hölder exponent, requires $f$ not only to be differentiable in open neighborhoods, but also its derivatives to satisfy a Hölder condition in them. The importance of this last one resides in the fact that it behaves well under the action of a wide class of pseudo-differential operators (see \cite{seuret_vehel}).

It is not hard to prove that these exponents satisfy the inequalities
\[\beta(x) \geq \beta^{*}(x) \geq \beta^{**}(x),\]
and that there are examples for which both inequalities are strict (it suffices to consider functions of the form $x^a\sin{x^{-b}}$; see \cite{chamizo_petrykiewicz_ruiz}).

Unless otherwise stated from now on $f$ will denote a nonzero modular form of weight $r$ for a subgroup $\Gamma$ of finite index of $\slgroup_2(\mathbb{Z})$ and multiplier system $\{\mu_\gamma\}$ (\emph{cf.} \cite{iwaniec, rankin}). This means that $f$ is analytic in the upper half-plane $\mathbb{H}$, transforms by the law
\begin{equation}\label{eq:modularity}
f(\gamma z) = \mu_\gamma (cz+d)^r f(z) \quad \text{for every } \gamma = \left( \begin{matrix} a & b \\ c & d \end{matrix} \right) \in \Gamma,
\end{equation}
and has at most polynomial growth when $\Im{z} \rightarrow 0^+$. The term $\gamma z$ stands for the Möbius transformation $(az+b)/(cz+d)$, while $\mu_\gamma$ is a unimodular complex number associated to the matrix $\gamma$. We also introduce the classical notation $j_\gamma(z) := cz+d$. All the power and logarithm functions considered in this article correspond to the branch with argument determination $-\pi < \arg{z} \leq \pi$.

Given any matrix $\gamma$ in $\glgroup_2^{\!+\!}(\mathbb{R})$, the slash operator $|_\gamma$ acting on the modular form $f$ is defined by
\[f|_{\gamma}(z) := (\det \gamma)^{r/2} \frac{f(\gamma z)}{\big(j_{\gamma}(z)\big)^r}.\]
In particular if $\gamma \in \Gamma$ we have $f|_\gamma = \mu_\gamma f$. More generally, if the group $\gamma^{-1} \Gamma \gamma \cap \slgroup_2(\mathbb{Z})$ has again finite index in $\slgroup_2(\mathbb{Z})$, then $f|_\gamma$ is a modular form of weight $r$ for this group in the sense defined above. Note however that the multiplier system might change even if $\gamma^{-1} \Gamma \gamma = \Gamma$. The finiteness condition is satisfied in particular for any $\gamma \in \slgroup_2(\mathbb{Z})$. The slash operator also satisfies $(f|_\gamma) |_\delta = f|_{\gamma \delta}$ for any two matrices $\gamma, \delta \in \glgroup_2^+(\mathbb{R})$.

In this article we will employ the nonstandard notation $f^\gamma$ to mean the same as $f|_\gamma$ to avoid complications when adding subscripts.

We refer to the rational numbers together with the symbol $\infty$ as cusps. Given a cusp $\mathfrak{a}$ its width $m_\mathfrak{a}$ is the order of the stabilizer of $\mathfrak{a}$ in $\slgroup_2(\mathbb{Z})$ modulo $\Gamma$. A modular form $f$ admits a ``Fourier'' expansion at every cusp $\mathfrak{a}$ (see \cite[\S 4.1]{iwaniec}):
\begin{equation}\label{eq:fourier_expansion}
f^{\sigma_\mathfrak{a}}(z) = \sum_{n \geq 0} a_n^\mathfrak{a} e^{2 \pi i (n+\kappa_\mathfrak{a})z} \qquad (z \in \mathbb{H}).
\end{equation}
In this expression $\sigma_\mathfrak{a}$ stands for a scaling matrix for the cusp $\mathfrak{a}$, which is given by a product $\gamma \eta$ where $\gamma$ is any matrix in $\slgroup_2(\mathbb{Z})$ satisfying $\gamma(\infty) = \mathfrak{a}$ and $\eta = \left(\begin{smallmatrix} \sqrt{m_\mathfrak{a}} & 0 \\ 0 & 1/\sqrt{m_\mathfrak{a}}\end{smallmatrix}\right)$, $m_\mathfrak{a}$ denoting the width of $\mathfrak{a}$. To avoid the ambiguity of choosing between $\gamma$ and $-\gamma$ we will adopt the convention that either $c < 0$, or $c = 0$ and $d = 1$, where $(c, d)$ is the bottom row of $\gamma$. The cusp parameter $0 \leq \kappa_\mathfrak{a} < 1$ appearing in \eqref{eq:fourier_expansion} depends only on $\mathfrak{a}$, while the coefficients $a_n^\mathfrak{a} \in \mathbb{C}$ may assume a finite number of values, as multiplication of $\gamma$ on the right by an unit translation corresponds to the change of variables $z \mapsto z + 1/m_\mathfrak{a}$. Up to this, they are unique. Moreover if one replaces $\mathfrak{a}$ with any other cusp lying in the same orbit modulo $\Gamma$, the right hand side of (\ref{eq:fourier_expansion}) stays invariant up to multiplication by an unimodular constant and the aforementioned translations.

If either $\kappa_\mathfrak{a} > 0$ or $a_0^\mathfrak{a} = 0$ then we say that $f$ is cuspidal at $\mathfrak{a}$, or that $\mathfrak{a}$ is cuspidal for $f$, and we define $f(\mathfrak{a}) = 0$. Otherwise we define $f(\mathfrak{a}) = a_0^\mathfrak{a}$ (note this value depends on the choice of $\sigma_\mathfrak{a}$). Finally, if $f$ is cuspidal at every cusp we say that $f$ is a cusp form. 

We will assume that $\kappa_\mathfrak{a} \in \mathbb{Q}$ for any cusp $\mathfrak{a}$. This is not an important restriction since any modular form coming from an arithmetic setting satisfies this, and most examples are of this kind. Notice that under this assumption (\ref{eq:fourier_expansion}) is, up to a dilation, a Fourier series in the usual sense.

In the case $\mathfrak{a} = \infty$ we may choose $\sigma_\infty = \eta$ and the expansion (\ref{eq:fourier_expansion}) corresponds to
\begin{equation}\label{eq:simple_fourier_exp}
f(mz) = \sum_{n \geq 0} a_n e^{2 \pi i (n+\kappa)z} \quad (z \in \mathbb{H}),
\end{equation}
where $a_n = a^\infty_n$, $m = m_\infty$, $\kappa = \kappa_\infty$. Given $\alpha > 0$ we define the $\alpha$-fractional integral of $f$ as the formal series (\emph{cf.} \cite{chamizo, chamizo_petrykiewicz_ruiz, duistermaat, jaffard})
\begin{equation}\label{eq:fractal_integ}
f_\alpha(mx) := \sum_{n + \kappa > 0} \frac{a_n}{(n+\kappa)^\alpha} e^{2 \pi i (n+\kappa)x}.
\end{equation}
For example, with the notation used in the introduction, $\Im\theta_1(x) = 2\varphi(x)$.

For any $\gamma \in \glgroup_2^+(\mathbb{R})$ such that $\gamma^{-1} \Gamma \gamma \cap \slgroup_2(\mathbb{Z})$ has finite index in $\slgroup_2(\mathbb{Z})$ we may also define $f_\alpha^\gamma := \big(f^{\gamma}\big)_\alpha$. In particular we may always choose $\gamma = \sigma_\mathfrak{a}$, and in this fashion we obtain a collection of related formal series
\[f_\alpha^{\sigma_\mathfrak{a}}(x) = \sum_{n + \kappa_\mathfrak{a} > 0} \frac{a_n^\mathfrak{a}}{(n+\kappa_\mathfrak{a})^\alpha} e^{2 \pi i (n+\kappa_\mathfrak{a})x}.\]
From our previous remarks it follows that $f_\alpha^{\sigma_\mathfrak{a}}$ is uniquely determined by the orbit of the cusp $\mathfrak{a}$ modulo $\Gamma$ up to translation and multiplication by an unimodular constant.

Although the results in this section are stated for an arbitrary modular form, in the proofs (\S\S\ref{sec:lemmas}-\ref{sec:proofs}) we will restrict to the case $m_\infty = 1$, $\kappa_\infty = 0$. This simplifies some arguments and can be assumed without loss of generality, as for any integer $N \geq 1$ the function $f(Nz)$ is again a modular form.

Our first three theorems establish some global and local regularity properties of $f_\alpha$. We use the following notation: for any real $s$ we denote by $[s]$ its integer part, \emph{i.e.}, the biggest integer which is smaller than or equal to $s$, and by $\{s\}$ its decimal part $s - [s]$. We also define $\alpha_0 := r/2$ if $f$ is a cusp form and $\alpha_0 := r$ otherwise.

\begin{theorem}[Global regularity]\label{thm:global}
Let $\alpha > 0$. The following holds: 
\begin{enumerate}
\item
If $\alpha \leq \alpha_0$ the series (\ref{eq:fractal_integ}) defining $f_\alpha$ diverges in a dense set.
\item
If $\alpha > \alpha_0$ the series (\ref{eq:fractal_integ}) defining $f_\alpha$ converges uniformly to a continuous function in all the real line. Moreover $f_\alpha \in \mathcal{C}^{[\alpha - \alpha_0], \{\alpha - \alpha_0\}}(\mathbb{R})$ if $\alpha - \alpha_0 \notin \mathbb{Z}$ and $f_\alpha \in \mathcal{C}^{\alpha - \alpha_0 - 1, 1}_{\log}(\mathbb{R})$ otherwise.
\item
If $0 < \alpha - \alpha_0 \leq 1$ then $f_\alpha \notin \mathcal{C}^{1, 0}(I)$ for any open interval $I$. The same is true for $\Re f_\alpha$ and $\Im f_\alpha$.
\end{enumerate}
\end{theorem}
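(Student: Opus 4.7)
As the theorem notes, by scaling we may assume $m_\infty = 1$ and $\kappa_\infty = 0$. The common technical tool I use is the Mellin-type identity $1/n^\alpha = (2\pi)^\alpha/\Gamma(\alpha)\,\int_0^\infty e^{-2\pi n t}\, t^{\alpha-1}\,dt$, which (once we have absolute convergence) yields
\[f_\alpha(x) \;=\; \frac{(2\pi)^\alpha}{\Gamma(\alpha)}\int_0^\infty \widetilde f(x+it)\, t^{\alpha-1}\,dt, \qquad \widetilde f := f - a_0.\]
The two driving estimates are the uniform bounds $|f(x+iy)| \ll y^{-\alpha_0}$ and $|\partial_x f(x+iy)| \ll y^{-\alpha_0-1}$, both standard consequences of modularity together with the Hecke (cusp case) or Rankin--Selberg (non-cusp case) bound on the Fourier coefficients.

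For part (ii), uniform convergence of (\ref{eq:fractal_integ}) follows from summation by parts once one has Wilton's bound $|\sum_{n\leq N} a_n e^{2\pi i n x}| \ll N^{\alpha_0}$ uniformly in $x$ (in the non-cusp case, after subtracting an Eisenstein-type piece with explicit coefficients handled directly). The regularity claim reduces to checking $f_\beta \in \Lambda^{\beta-\alpha_0}(\mathbb{R})$ for $\beta-\alpha_0 \in (0,1]$, because differentiating the integral representation $k = [\alpha-\alpha_0]$ times gives $f_\alpha^{(k)} \propto f_{\alpha-k}$ with $\alpha-k-\alpha_0 \in (0,1]$. For such a $\beta$, writing $f_\beta(x)-f_\beta(y)$ through the integral representation and using $|\widetilde f(x+it)-\widetilde f(y+it)| \ll t^{-\alpha_0}$ for small $t$ and $\ll |x-y|\,t^{-\alpha_0-1}$ for larger $t$, then splitting the $t$-integral at $t = |x-y|$, each piece contributes $\ll |x-y|^{\beta-\alpha_0}$; at the integer boundary $\beta=\alpha_0+1$ the second piece produces an extra factor of $\log|x-y|$, which is exactly the $\mathcal{C}^{\,\cdot,1}_{\log}$ refinement stated.

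For part (i), the idea is that the coefficients $a_n/n^\alpha$ are too large to come from a reasonable function when $\alpha \leq \alpha_0$. In the non-cusp case, the Eisenstein-type contribution forces $|a_n/n^\alpha|\not\to 0$, so the Cantor--Lebesgue theorem yields divergence on a set of positive measure, while the $\Gamma$-orbit of any non-cuspidal cusp is dense in $\mathbb{R}$. For cusp forms, Rankin--Selberg gives $\sum_{n\leq N}|a_n|^2 \asymp N^r$, so $\sum|a_n/n^\alpha|^2 = \infty$ when $\alpha\leq r/2$, ruling out an $L^2$ sum; the approximate functional equation of \S\ref{sec:approx_eq} then promotes this to pointwise divergence along a $\Gamma$-orbit of rationals, which is again dense. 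Part (iii) follows by combining (i) with the same equation: around each rational $p/q$ it forces an expansion
\[f_\alpha(p/q+h) \;=\; P(h) \;+\; c(p,q)\,|h|^{\alpha-\alpha_0}\, g(1/h) \;+\; o\bigl(|h|^{\alpha-\alpha_0}\bigr),\]
where $g$ is a fractional integral of a modular form at the cusp $p/q$, and $c(p,q)\neq 0$ for a dense set of $p/q$. Since $\alpha-\alpha_0 \leq 1$ and $g$ is bounded, non-constant, and without limit at infinity by (i), this oscillation is incompatible with $f_\alpha$ being continuously differentiable on any interval containing such $p/q$; the same reasoning applies to $\Re f_\alpha$ and $\Im f_\alpha$ after selecting $c(p,q)$ with nonzero real or imaginary part.

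The main obstacle I expect is the rigorous setup of the approximate functional equation: it is the single engine needed both for the borderline divergence in (i) and for the cusp-type oscillation in (iii), and once it is in place the remaining steps reduce to either direct estimation (part (ii)) or a clean contradiction. A secondary technical nuisance is the integer boundary in part (ii), where the extra logarithm must be tracked cleanly through the splitting of the $t$-integral and through the chain of derivatives.
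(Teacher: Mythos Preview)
Your treatment of part~(ii) is essentially the paper's (the splitting estimate is packaged there as Lemma~\ref{lem:diff}), but parts~(i) and~(iii) both have genuine gaps.

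For (i): in the non-cusp case, Cantor--Lebesgue requires $a_n/n^\alpha \not\to 0$, which fails throughout the critical range $\alpha\in(r-1,r]$ (already for Eisenstein series $a_n\asymp n^{r-1}$, so $a_n/n^\alpha\to 0$ there). In the cusp case, the approximate functional equation is an identity between \emph{functions} and presupposes $\alpha>\alpha_0$, so it cannot be invoked to analyse the formal series at $\alpha\le\alpha_0$; an $L^2$ blow-up does not by itself give pointwise divergence on a dense set. The paper instead argues from the growth of $f$ on vertical lines: in the non-cusp case, the cusp expansion $f(x_0+it)\sim Ct^{-r}$ makes the Abel means of $f_r$ diverge at every non-cuspidal rational; in the cusp case, a summability-kernel argument shows that convergence of $f_{r/2}(x)$ at an irrational $x$ would force $y^{r/2}f(x+iy)\to 0$, contradicting the lower bound of Lemma~\ref{lem:simple_bounds}.

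For (iii): the exponent in your expansion is wrong. The oscillating term in Theorem~\ref{thm:func_eq} is $B|h|^{2\alpha-r}f_\alpha^\sigma(\sigma^{-1}x)$, so for cusp forms ($\alpha_0=r/2$) its order is $2(\alpha-\alpha_0)$, not $\alpha-\alpha_0$. Once $\alpha-\alpha_0>1/2$ this term is $o(|h|)$ and hence differentiable at $x_0$, so your local-oscillation contradiction fails on half of the range $(0,1]$. The paper's mechanism is global rather than local: assuming $f_\alpha\in\mathcal{C}^{1,0}(I)$ for some $I$, the functional equation (whose error is $\mathcal{C}^{1,0}$ away from the cusp) propagates $\mathcal{C}^{1,0}$ to $\gamma I$ for every $\gamma\in\Gamma$, hence to all of $\mathbb{R}$; then Bessel's inequality forces $\sum_n|a_n|^2 n^{2-2\alpha}<\infty$, contradicting the Rankin--Selberg lower bound $\sum_{n\le N}|a_n|^2\gg N^r$ when $\alpha\le r/2+1$. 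In the non-cusp case the obstruction at a non-cuspidal rational is the pure-power term $\psi(h)\sim h^{\alpha-r}$ from Theorem~\ref{thm:func_eq}, not an oscillating $g(1/h)$ factor as you wrote.
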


The statements in theorem~\ref{thm:global} concerning the convergence or divergence of the series (\ref{eq:fractal_integ}) are contained in proposition~3.1 of \cite{chamizo}. Part three is a generalization of lemma~3.5 of \cite{chamizo_petrykiewicz_ruiz}.

For the remaining results stated in this section we will assume $\alpha > \alpha_0$.

\begin{theorem}[Local regularity at rationals]\label{thm:local_rat}
Let $x$ be a rational number and $\beta(x), \beta^{*}(x)$ and $\beta^{**}(x)$ the Hölder exponents of either $f_\alpha$, $\Re f_\alpha$ or $\Im f_\alpha$. Then:
\begin{enumerate}
\item
If $f$ is cuspidal at $x$ then $\beta(x) = 2\alpha - r$. Otherwise $\beta(x) = \alpha - r$.
\item
If $f$ is a cusp form then
\[\beta^*(x) = [\alpha - r/2] + \min\big(1, 2\{\alpha - r/2\}\big).\]
If $f$ is not a cusp form then
\[\beta^*(x) = \begin{cases}
[\alpha - r] + \min\big(1, 2\{\alpha - r\} + r\big) & f \text{ cuspidal at } x, \alpha -r \notin \mathbb{Z} \\
\alpha - r & f \text{ not cuspidal at } x \text{ or } \alpha - r \in \mathbb{Z}.
\end{cases}\]
\item
In any case $\beta^{**}(x) = \alpha - \alpha_0$.
\item
If $0 < \alpha - \alpha_0 \leq 1$ then $f_\alpha$ (resp. $\Re f_\alpha$, $\Im f_\alpha\,$) is not differentiable at any rational point which is not cuspidal for $f$. If $x$ is cuspidal for $f$ then $f_\alpha$ (resp. $\Re f_\alpha$, $\Im f_\alpha\,$) is differentiable at $x$ if and only if $\alpha > (r+1)/2$, and in this case the derivative is given by
\[f_\alpha'(x) = \frac{(2 \pi)^\alpha}{(im)^\alpha \Gamma(\alpha)} \int_{(x)} (z-x)^{\alpha - 1} f'(z)\,dz,\]
where $(x)$ denotes the vertical ray connecting $x$ with $i\infty$.
\end{enumerate}
\end{theorem}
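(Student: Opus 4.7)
The overall strategy is to derive everything from an approximate functional equation (AFE) for $f_\alpha$ around a rational $x_0$, to be constructed in \S\ref{sec:approx_eq}. Letting $\sigma$ be a scaling matrix for the cusp $x_0$, the AFE should take the schematic form
\begin{equation*}
f_\alpha(x) = P(x-x_0) + c_1(x-x_0)^{\alpha-r} + c_2(x-x_0)^{2\alpha-r}\, g\bigl(-(x-x_0)^{-1}\bigr) + R(x),
\end{equation*}
where $P$ is a polynomial of degree $[\alpha-\alpha_0]$, the coefficient $c_1$ is proportional to $f(x_0)$ (and so vanishes exactly when $f$ is cuspidal at $x_0$), the function $g$ is essentially $f_\alpha^\sigma$ (continuous and bounded on $\mathbb{R}$ by Theorem~\ref{thm:global}), and the remainder $R$ is strictly more regular than the displayed singular terms. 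All four statements will then be read off this expansion.

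Parts~1 and~3 are the most direct. The pointwise exponent $\beta(x_0)$ equals that of the dominant singular term: $\alpha-r$ when $c_1\neq 0$, and $2\alpha-r$ otherwise. Sharpness in the cuspidal case follows from the non-triviality of the continuous periodic function $g$, which prevents the oscillatory factor $g(-(x-x_0)^{-1})$ from tending to $0$ along any sequence $x\to x_0$. For $\beta^{**}(x_0)$, the lower bound $\alpha-\alpha_0$ is immediate from Theorem~\ref{thm:global}; the upper bound follows by noting that every neighborhood of $x_0$ contains points of generic regularity $\alpha-\alpha_0$, a fact to be established via the wavelet transform in \S\ref{sec:wavelet}.

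Part~2 is the delicate step. One differentiates the oscillatory summand $(x-x_0)^{2\alpha-r}g(-(x-x_0)^{-1})$ termwise: each $\partial_x$ either decreases the exponent of the prefactor by $1$, or decreases it by $2$ while applying a derivative to $g$ through the chain rule. A derivative of $g$ globally behaves as a fractional integral of $f^\sigma$ of lower order (again by Theorem~\ref{thm:global}), so after $k=[\alpha-r/2]$ differentiations the surviving oscillatory factor has pointwise Hölder exponent $2\{\alpha-r/2\}$, clipped at $1$ since beyond Lipschitz regularity one would require smoothness of the remaining factor; this produces $[\alpha-r/2]+\min(1,2\{\alpha-r/2\})$ in the cusp-form case. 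For general $f$ one combines this analysis with the contribution of the $c_1(x-x_0)^{\alpha-r}$ term, yielding the stated piecewise formula.

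Finally, part~4 is a direct consequence of part~1: at a non-cuspidal rational the $(x-x_0)^{\alpha-r}$ term with exponent $<1$ forbids differentiability when $\alpha-\alpha_0\le 1$, while at a cuspidal rational differentiability holds iff $2\alpha-r>1$, that is, $\alpha>(r+1)/2$. The integral formula for $f_\alpha'(x)$ follows by substituting the Mellin identity $(n+\kappa)^{-\alpha}=\Gamma(\alpha)^{-1}\int_0^\infty t^{\alpha-1}e^{-(n+\kappa)t}\,dt$ into~\eqref{eq:fractal_integ}, interchanging sum and integral, differentiating under the integral sign, and rotating the $t$-contour to the vertical ray $(x)$; the cuspidal decay of $f'$ at $x$ validates these manipulations. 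The main obstacle will be the bookkeeping in part~2, where tracking the Hölder exponent that survives after $[\alpha-\alpha_0]$ differentiations, identifying the clipping at Lipschitz, and isolating the transitions between integer and non-integer values of $\alpha-r$ demand careful coordination between the power-law prefactor and the pointwise regularity of iterated derivatives of $g$.
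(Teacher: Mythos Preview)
Your overall plan via the approximate functional equation matches the paper for parts~1, 3 and (mostly)~4. The substantive divergence is part~2, where you propose to differentiate the oscillatory term $(x-x_0)^{2\alpha-r}g\bigl(\sigma^{-1}x\bigr)$ repeatedly and track the surviving regularity. This is both harder and riskier than necessary, and it runs into a concrete obstacle: the error term $E$ in the AFE is only shown to lie in $\mathcal{C}^{1,0}\bigl(\mathbb{R}\setminus\{x_0\}\bigr)$, so you cannot iterate the differentiation of the AFE decomposition beyond one step without first strengthening the regularity of $E$. The paper bypasses all of this bookkeeping with a single observation: termwise differentiation of the Fourier series gives $f_\alpha^{(k)}=(2\pi i)^k f_{\alpha-k}$ whenever $\alpha-k>\alpha_0$, so $\beta^*(x_0)=k+\min\bigl(1,\beta_{\alpha-k}(x_0)\bigr)$ with $k=[\alpha-\alpha_0]$ (or $k=\alpha-\alpha_0-1$ in the integer case), and $\beta_{\alpha-k}(x_0)$ is already known from part~1. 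This reduces part~2 to one line and makes your anticipated ``main obstacle'' disappear.

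Two smaller points. First, for $\Re f_\alpha$ and $\Im f_\alpha$ you need an extra argument in the cuspidal case of part~1 (and in part~4): the sharpness $\beta(x_0)\le 2\alpha-r$ could fail for a real projection if the image $f_\alpha^\sigma(\mathbb{R})$ happened to lie in a real line of $\mathbb{C}$; the paper rules this out by noting that $f_\alpha^\sigma$ has only positive frequencies. Second, your derivation of the integral formula for $f_\alpha'(x_0)$ by differentiating the Mellin representation directly is delicate, since dominated convergence for the derivative of the integrand is not uniform in a neighborhood of $x_0$. The paper instead reads the derivative off the explicit smooth error terms of the AFE (the two integrals over fixed contours not touching the real axis near $x_0$), where differentiation under the integral sign is unproblematic, and then integrates by parts to produce $f'(z)$.
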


Our previous knowledge on these Hölder exponents at rational points was very poor, specially in the non-cuspidal case (\emph{cf.} theorems~3.3, 3.4 and 3.6 of \cite{chamizo_petrykiewicz_ruiz}). Part $4$ of theorem~\ref{thm:local_rat} is essentially contained in theorem~2.2 of \cite{chamizo}.

The regularity at irrational points depends on how well these points can be approximated by rationals which are not cuspidal for $f$. This is precisely measured by the following quantity:\footnote{The symbol $\ll$ could be replaced by $\leq$ in this definition without affecting the value of $\tau_x$, but this convention simplifies some arguments later on.}
\begin{equation}\label{eq:tau_x}
\tau_x := \sup \left\{\tau : \left|x - \frac{p}{q}\right| \ll \frac{1}{q^\tau} \text{ for infinitely many non-cuspidal rationals } \frac{p}{q} \right\}.
\end{equation}
Note that the inequality $\tau_x \geq 2$ is always satisfied for any irrational number $x$ and, in fact, the number $2$ is always contained in the set on the right hand side of (\ref{eq:tau_x}). This follows from Hedlund's lemma (see \cite[\S3]{patterson}).

\begin{theorem}[Local regularity at irrationals]\label{thm:local_irrat}
Let $x$ be any irrational number and $\beta(x), \beta^{*}(x)$ and $\beta^{**}(x)$ the Hölder exponents of either $f_\alpha$, $\Re f_\alpha$ or $\Im f_\alpha$. Then:
\begin{enumerate}
\item
If $f$ is a cusp form then $\beta(x) = \beta^{*}(x) = \beta^{**}(x) = \alpha - r/2$.
\item
If $f$ is not a cusp form,
\begin{align*}
\beta(x) &= \alpha - \left(1- \frac{1}{\tau_x}\right)r\\
\beta^*(x) &= \begin{cases}
[\alpha - r] + \min\big(1, \{\alpha - r\} + r/\tau_x\big) & \alpha - r \notin \mathbb{Z} \\
\alpha - r & \alpha - r \in \mathbb{Z} 
\end{cases} \\
\beta^{**}(x) &= \alpha - r.
\end{align*}
\end{enumerate}
\end{theorem}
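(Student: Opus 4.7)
The plan is to combine the approximate functional equation (AFE) from \S\ref{sec:approx_eq} with the wavelet transform estimates from \S\ref{sec:wavelet}. The two cases require somewhat different ingredients, so I treat them separately.

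For part $1$ (cusp form case), the lower bound $\beta^{**}(x) \geq \alpha - r/2$ is immediate from Theorem~\ref{thm:global}(2), since $\alpha_0 = r/2$. Combined with $\beta^{**}(x) \leq \beta^*(x) \leq \beta(x)$, it remains only to prove $\beta(x) \leq \alpha - r/2$. Rational approximation alone cannot achieve this, because every rational is cuspidal and Theorem~\ref{thm:local_rat}(1) gives $\beta(p/q) = 2\alpha - r > \alpha - r/2$. Instead I would use the wavelet characterization of $\mathcal{C}^s(x)$: if $f_\alpha \in \mathcal{C}^s(x)$, then its wavelet coefficients $c_{j,k}$ satisfy $|c_{j,k}| \ll 2^{-j(s+1/2)}(1 + |2^j x - k|)^s$. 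The representation of $c_{j,k}$ developed in \S\ref{sec:wavelet} expresses it as an integral of $f$ against a test function supported in the upper half plane; because $f$ is holomorphic and nonzero, one can produce, for every $\epsilon > 0$, arbitrarily large $j$ and positions $k/2^j$ near $x$ for which $|c_{j,k}|$ is at least of order $2^{-j(\alpha - r/2 + 1/2 - \epsilon)}$, forcing $s \leq \alpha - r/2$.

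For part $2$ (non-cusp form case), $\beta^{**}(x) \geq \alpha - r$ again follows from Theorem~\ref{thm:global}(2) with $\alpha_0 = r$. The reverse inequality is obtained by observing that every open interval $I \ni x$ contains a non-cuspidal rational $p/q$ at which Theorem~\ref{thm:local_rat}(2) gives $\beta^*(p/q) = \alpha - r$, so $f_\alpha \notin \mathcal{C}^{k,s}(I)$ whenever $k + s > \alpha - r$. For the pointwise exponent, the upper bound $\beta(x) \leq \alpha - r + r/\tau_x$ arises from the AFE at non-cuspidal approximants $p/q$ with $|x - p/q| \ll q^{-\tau}$ for $\tau < \tau_x$ arbitrary: the singular term of order $|y - p/q|^{\alpha - r}$ appearing there is incompatible with polynomial approximation of order exceeding $\alpha - r + r/\tau$ at $x$, once one examines the behavior of $f_\alpha(y) - P(y - x)$ on points $y$ close to $p/q$. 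The matching lower bound is proved by choosing, for each $y$ near $x$, an optimal non-cuspidal rational $p/q$ at the scale of $|y - x|$ and applying the AFE there; the definition of $\tau_x$ then bounds the remainder by $|y - x|^{\alpha - r(1 - 1/\tau_x)}$. The formula for $\beta^*(x)$ requires the approximating polynomial at $x$ to be the Taylor polynomial, which raises the relevant threshold to $[\alpha - r]$ and produces the min-formula as soon as $\{\alpha - r\} + r/\tau_x$ reaches $1$.

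The main obstacle is the upper bound $\beta(x) \leq \alpha - r/2$ in the cusp form case. Since every rational is cuspidal with pointwise exponent strictly greater than $\alpha - r/2$, no rational-approximation argument can yield the bound, and the full wavelet machinery together with the non-vanishing of $f$ on $\mathbb{H}$ is required. A secondary difficulty in the non-cusp form case is carefully controlling the interplay between the $\mathcal{C}^{k,s}$-type regularity carried by the Taylor expansion at $x$ and the chirp-type singularity coming from the closest non-cuspidal rational, which is what ultimately determines whether the $\min$ in the formula for $\beta^*(x)$ is attained by its first or second argument.
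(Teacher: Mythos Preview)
Your route diverges from the paper's in a significant way, and in the non-cusp case it carries a real risk of failure.

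\textbf{How the paper actually argues.} The computation of $\beta(x)$ at irrationals never uses the approximate functional equation. It goes entirely through Corollary~\ref{cor:wavelet}, whose point is the explicit identity $W_{f_\alpha}(a,b)=C\,a^{\alpha}\bigl(f(b+ia)-f(\infty)\bigr)$: upper and lower bounds on $\beta(x_0)$ are literally equivalent to growth estimates for $f$ near the boundary. In the cusp case these come from Lemma~\ref{lem:simple_bounds} (both directions, the lower bound being $f(x_0+iy)\gg y^{-r/2}$ along a sequence $y\to0^+$); in the non-cusp case they come from Lemma~\ref{lem:irrat_bounds}, which is proved by Diophantine approximation via Ford circles. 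Then $\beta^*(x_0)$ follows from Lemma~\ref{lem:beta_star}, i.e.\ one differentiates $k=[\alpha-\alpha_0]$ times (or $k=\alpha-\alpha_0-1$ in the integer case) and reduces to the already-known $\beta$ of $f_{\alpha-k}$. Finally $\beta^{**}$ was already handled uniformly for all $x$ in the proof of Theorem~\ref{thm:local_rat}(3).

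\textbf{Where your sketch is weak.} For the cusp form upper bound your idea is in the right spirit, but you speak of dyadic coefficients $c_{j,k}$ and ``producing'' large ones; what actually makes this clean is the continuous-wavelet identity above together with the specific lower bound of Lemma~\ref{lem:simple_bounds} on the vertical ray $\Re z=x_0$. In the non-cusp case your plan to extract both the upper and lower bounds on $\beta(x)$ from the AFE at nearby non-cuspidal $p/q$ is problematic: the error term $E$ in Theorem~\ref{thm:func_eq} is only asserted to lie in $\mathcal{C}^{2\alpha-r+1}(x_0)$ and $\mathcal{C}^{1,0}(\mathbb{R}\setminus\{x_0\})$ with no uniformity in $x_0$, so letting $p/q$ vary with the scale is not justified as stated; and inferring an upper bound on $\beta(x)$ from the presence of a $(y-p/q)^{\alpha-r}$ singularity at a nearby point is exactly the kind of ``chirp detection'' that is hard to make rigorous without the wavelet criterion. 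Your description of $\beta^*$ is too vague to match the min-formula; the paper's reduction via $f_\alpha^{(k)}=(2\pi i)^k f_{\alpha-k}$ is what produces it directly.
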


\begin{remark}
Regarding the differentiability of these functions at irrational points we could not prove anything beside the obvious results: it cannot differentiable whenever $\beta(x) < 1$, while it must be for $\beta(x) > 1$.
\end{remark}

The cuspidal case of theorem~\ref{thm:local_irrat} was covered by theorem~3.1 of \cite{chamizo_petrykiewicz_ruiz}, while the non-cuspidal case was previously only known for ``Riemann's example'' (see \cite{jaffard}).

As mentioned in the introduction an approximate functional equation plays a key role in the proof of some of these results. This equation has interest on its own and for this reason we state it here:

\begin{theorem}[Approximate functional equation]\label{thm:func_eq}
Let $\sigma = \sigma_{x_0}$ be any scaling matrix for the cusp $x_0 \in \mathbb{Q}$. Then there exist two nonzero constants $A$, $B$ with $B > 0$ such that:
\[f_\alpha(x) = A i^{-\alpha} f(x_0)\psi(x-x_0) + B |x-x_0|^{2\alpha}(x-x_0)^{ - r} f_\alpha^\sigma\big(\sigma^{-1}x\big) + E(x)\]
where
\[\psi(x) = \begin{cases}
x^{\alpha - r} & \alpha -r \notin \mathbb{Z} \\
x^{\alpha - r}\log{x} & \alpha - r \in \mathbb{Z}
\end{cases}.\]
The error term $E(x)$ lies in the spaces $\mathcal{C}^{1, 0}\big(\mathbb{R} \setminus \{x_0\}\big)$ and $\mathcal{C}^{2\alpha - r + 1}(x_0)$.
\end{theorem}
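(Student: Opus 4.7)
The approach, inspired by Duistermaat's treatment of $\varphi$ in \cite{duistermaat}, is to represent $f_\alpha$ as a contour integral in the upper half-plane, apply the modularity at $x_0$ under the integrand, and then change variables so as to recover a matching contour integral for $f_\alpha^\sigma$. Under the reduction $m_\infty=1$, $\kappa_\infty=0$, the Mellin identity $\Gamma(\alpha)n^{-\alpha}=\int_0^\infty t^{\alpha-1}e^{-nt}\,dt$ applied termwise to (\ref{eq:simple_fourier_exp}) yields
\[f_\alpha(x) \;=\; \frac{(2\pi)^\alpha}{i^\alpha \Gamma(\alpha)}\int_{(x)} (z-x)^{\alpha-1}\bigl(f(z)-a_0\bigr)\,dz,\]
which converges absolutely thanks to the exponential decay of $f(z)-a_0$ as $\Im z\to\infty$.

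The modular transformation at $x_0$ translates, via the identity $j_\sigma(\sigma^{-1}z)=\lambda/(z-x_0)$ (where $\sigma=\sigma_{x_0}$, $\lambda := -1/c$, $c=\sigma_{21}$), into $f(z)=\lambda^r (z-x_0)^{-r}\bigl(f(x_0)+g(\sigma^{-1}z)\bigr)$, with $g:=f^\sigma-f(x_0)$ cuspidal at $\infty$. Substituting this into the integrand formally splits it into three pieces: a pure pole $\lambda^r(z-x_0)^{-r}f(x_0)$, the cuspidal remainder $\lambda^r(z-x_0)^{-r}g(\sigma^{-1}z)$, and the compensating constant $-a_0$. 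None of these is individually absolutely integrable against $(z-x)^{\alpha-1}$, so the decomposition is made rigorous by either integrating by parts sufficiently many times in the original representation before substituting, or by truncating the contour at height $T$ and passing to the limit with careful bookkeeping of boundary terms.

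The pole contribution, combined with the constant $-a_0$ piece, reduces via $u=(z-x)/(x-x_0)$ to a classical Beta integral proportional to $(x-x_0)^{\alpha-r}$, with the logarithmic variant in the limiting case $\alpha-r\in\mathbb Z$ emerging from the expected simple pole of this Beta integral; this identifies $A$ and fixes the $\psi$ term. For the cuspidal piece, the substitution $w=\sigma^{-1}z$ and the cocycle relations $z-x=(w-y)/(j_\sigma(w)j_\sigma(y))$, $dz=j_\sigma(w)^{-2}dw$, $j_\sigma(y)=\lambda/(x-x_0)$ (with $y:=\sigma^{-1}x$) produce the prefactor $\lambda^{1-\alpha}(x-x_0)^{\alpha-1}$ multiplying an integral $\int_\Gamma (w-y)^{\alpha-1}j_\sigma(w)^{r-1-\alpha}g(w)\,dw$, where $\Gamma = \sigma^{-1}\bigl((x)\bigr)$. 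Writing $j_\sigma(w)^{r-1-\alpha}=j_\sigma(y)^{r-1-\alpha}+[j_\sigma(w)^{r-1-\alpha}-j_\sigma(y)^{r-1-\alpha}]$ and deforming $\Gamma$ to the vertical ray $(y)$, the leading piece becomes precisely $\lambda^{r-2\alpha}(x-x_0)^{2\alpha-r}f_\alpha^\sigma(y)$ by the Mellin representation applied to $f^\sigma$; this yields the self-similar term, with the absolute value on $(x-x_0)^{2\alpha}$ absorbing the branch effects when $x<x_0$ and with $B$ proportional to $\lambda^{r-2\alpha}>0$ (using the sign convention $c<0$).

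The main obstacle is the regularity of $E$, which collects the subleading contributions: the remainder from the $j_\sigma(w)^{r-1-\alpha}$ split, residues picked up in the contour deformation of $\Gamma$ to $(y)$, and truncation tails. Away from $x_0$ all contours can be kept uniformly away from the real line, so differentiation under the integral sign gives $E\in\mathcal{C}^\infty(\mathbb R\setminus\{x_0\})$, in particular the required $\mathcal{C}^{1,0}$ regularity. The sharp claim $E\in\mathcal{C}^{2\alpha-r+1}(x_0)$ requires exhibiting a Taylor polynomial of the appropriate degree approximating $E$ near $x_0$: the crucial gain comes from the factor $j_\sigma(w)^{r-1-\alpha}-j_\sigma(y)^{r-1-\alpha}$ vanishing at $w=y$, which multiplies the exponentially decaying $g(w)$, so that iterated differentiation in $y$ of the remainder integral produces bounds of order $(x-x_0)^{2\alpha-r+1}$ after tracking the powers of $j_\sigma(y)$ through the change of variables.
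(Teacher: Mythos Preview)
Your proposal follows essentially the same route as the paper: Mellin integral representation for $f_\alpha$, change of variables $z=\sigma w$, the add--subtract trick on $j_\sigma(w)^{r-1-\alpha}$ to peel off the self-similar term $B|x-x_0|^{2\alpha}(x-x_0)^{-r}f_\alpha^\sigma(\sigma^{-1}x)$, and a Beta-type expansion of the pole piece to identify $\psi$. The paper organizes the bookkeeping slightly differently (it truncates the vertical ray at height $2i$ \emph{before} changing variables, which neatly avoids your regularization step and isolates the smooth tails (\ref{eq:func_error_1}), (\ref{eq:func_error_3}) from the start), but the architecture is the same.

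There is one genuine gap in your regularity argument for $E$. You assert that away from $x_0$ ``all contours can be kept uniformly away from the real line'' and hence $E\in\mathcal{C}^\infty(\mathbb{R}\setminus\{x_0\})$. This is not true for the remainder integral
\[
\int_{(y)}(w-y)^{\alpha-1}\bigl[j_\sigma(w)^{r-1-\alpha}-j_\sigma(y^+)^{r-1-\alpha}\bigr]g(w)\,dw,
\]
the analogue of the paper's term (\ref{eq:func_error_4}). The contour $(y)$ begins at the real point $y=\sigma^{-1}x$, which moves with $x$; the factor $(w-y)^{\alpha-1}$ has its branch point there; and $g(w)=f^\sigma(w)-f(x_0)$ blows up like $(\Im w)^{-\alpha_0}$ as $w$ approaches the real axis. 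You cannot simply push the contour off the real line, and naive differentiation under the integral sign fails at the endpoint. The paper establishes only $\mathcal{C}^{1,0}(\mathbb{R}\setminus\{x_0\})$ for this piece, which is all the theorem claims, via a dedicated technical lemma (Lemma~\ref{lem:diff}): the bracketed factor supplies one extra order of vanishing at $w=y$, raising the effective exponent just enough to justify a single differentiation in $x$. Your outline is missing this mechanism; the rest of your plan is sound.
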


This theorem remains true for any $\sigma \in \slgroup_2(\mathbb{R})$ as long as $f^\sigma$ is a modular form for a finite index subgroup of $\slgroup_2(\mathbb{Z})$ and the bottom-left entry of $\sigma$ is negative (see \S\ref{sec:approx_eq}). In this case $x_0 = \sigma(\infty)$, and $f(x_0)$ should be understood as the limit of $f^\sigma(z)$ as $\Im{z} \rightarrow \infty$. Note that the theorem may be applied to $f^{\sigma_\mathfrak{b}}$ and $\sigma = \sigma_\mathfrak{b}^{-1} \sigma_\mathfrak{a}$ to show that around some rational $x_0 = \sigma(\infty)$ the graph of the function $f_\alpha^{\sigma_\mathfrak{a}}$ looks like a deformed version of the graph of $f_\alpha^{\sigma_\mathfrak{b}}$. Note also that when $\sigma \in \Gamma$ we have $f(x_0) = 0$ and $f^\sigma = \mu_\sigma f$, and hence the result may be understood as an approximate version of (\ref{eq:modularity}) for $f_\alpha^\sigma$. In this particular case the theorem corresponds to lemma~3.8 of \cite{chamizo_petrykiewicz_ruiz}.

Theorem~\ref{thm:func_eq} was already known in the literature when $f$ is a classical cusp form of integer weight $r > 2$ and $\alpha = r - 1$. In this context $f_{r-1}$ is known as the Eichler integral of $f$ and the approximate equation is in fact exact, the error term corresponding to the period polynomial of $f$ of the Eichler-Shimura theory (\emph{cf.} \cite{eichler}). We recover this result:

\begin{corollary}\label{cor:eichler}
If $f$ is a cusp form of weight $r > 2$ and $\alpha = r-1$ then the error term $E(x)$ in theorem~\ref{thm:func_eq} is given by
\[E(x) = \frac{(2 \pi)^{r-1}}{(im)^{r-1} \Gamma(r-1)} \int_{(x_0)} (z-x)^{r - 2} f(z)\,dz.\]
If moreover $r$ is an integer then $E$ is a polynomial.
\end{corollary}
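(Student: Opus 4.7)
The plan is to derive an Eichler-type integral representation for $f_\alpha$ itself and then deform its contour so that it splits into the two pieces of Theorem~\ref{thm:func_eq}. Since $f$ is cuspidal at every cusp and $\alpha = r-1 > r/2$, the Fourier series defining $f_\alpha$ converges absolutely. Inserting the Mellin identity $n^{-\alpha} = (2\pi)^\alpha\Gamma(\alpha)^{-1}\int_0^\infty t^{\alpha-1}e^{-2\pi nt}\,dt$ and interchanging sum and integral yields
\[
f_\alpha(x)= \frac{(2\pi)^{r-1}}{(im)^{r-1}\,\Gamma(r-1)}\int_{(x)}(z-x)^{r-2}f(z)\,dz,
\]
which is the analogue for $f_\alpha$ of the formula for $f_\alpha'$ in part 4 of Theorem~\ref{thm:local_rat} (no integration by parts is needed since we are not differentiating).

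Write $\sigma^{-1}=\left(\begin{smallmatrix}a'&b'\\c'&d'\end{smallmatrix}\right)\in\slgroup_2(\mathbb{R})$ and let $C_\sigma\subset\mathbb{H}$ be the hyperbolic geodesic arc joining $x$ to $x_0$ (the image under $\sigma$ of the vertical ray $(\sigma^{-1}x)$). By Cauchy's theorem---the exponential decay of $f$ at both $i\infty$ and $x_0$ killing the arc that closes the deformation at infinity---we may replace $(x)$ by $C_\sigma\cup(x_0)$. In the integral over $C_\sigma$ we substitute $z=\sigma w$ and apply the Möbius identities
\[
z-x = \frac{w-\sigma^{-1}x}{(c'z+d')(c'x+d')},\qquad dz = \frac{dw}{(c'z+d')^2},\qquad f(z) = (c'z+d')^r f^\sigma(w).
\]
Because $r-\alpha-1 = 0$, every power of $c'z+d'$ cancels, and what remains is exactly $(c'x+d')^{r-2}$ times the Eichler representation of $f_\alpha^\sigma$ at $\sigma^{-1}x$. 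Since $c'x_0+d'=0$ gives $c'x+d'=c'(x-x_0)$, this prefactor agrees (up to a constant) with the factor $|x-x_0|^{2\alpha}(x-x_0)^{-r}$ in Theorem~\ref{thm:func_eq}; as $f(x_0)=0$ for a cusp form the first main term of that theorem is absent, so matching constants identifies $E(x)$ with the remaining piece
\[
E(x)=\frac{(2\pi)^{r-1}}{(im)^{r-1}\,\Gamma(r-1)}\int_{(x_0)}(z-x)^{r-2}f(z)\,dz.
\]

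For integer $r\geq 3$, $(z-x)^{r-2}$ expands by the binomial theorem into a polynomial in $x$ of degree $r-2$ with coefficients $\int_{(x_0)}z^{r-2-k}f(z)\,dz$, which are the classical Eichler-Shimura periods of $f$ at $x_0$; hence $E$ is a polynomial of degree at most $r-2$. The main obstacle is the bookkeeping of the contour deformation and of the constants: one must fix a single branch of $(z-x)^{r-2}$ (legal since $z-x$ stays in the closed upper half-plane on every contour considered), respect the sign convention $c<0$ built into the definition of a scaling matrix, and verify that the constant produced by the Möbius computation really equals the $B$ implicit in Theorem~\ref{thm:func_eq}. None of this is conceptually subtle, but aligning every factor is where the work resides.
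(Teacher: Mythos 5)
Your strategy is sound and is, at bottom, the same mechanism the paper uses: the Eichler integral representation of $f_\alpha$ (Lemma~\ref{lem:integ_rep}), a contour shift from the ray $(x)$ onto a path through $x_0$, and the Möbius change of variables $z=\sigma w$, with the whole point being that $r-\alpha-1=0$ kills the automorphy factor. The paper's own proof is just more economical: it does not redo the contour argument but returns to the explicit list of leftover terms \eqref{eq:func_error_1}--\eqref{eq:func_error_4} from the proof of Theorem~\ref{thm:func_eq}, observes that \eqref{eq:func_error_2} and the first summand of \eqref{eq:func_error_1} vanish because $f$ is cuspidal, that \eqref{eq:func_error_4} vanishes because $\phi$ is constant when $\alpha=r-1$, and that the three surviving integrals concatenate into $\int_{(x_0)}$ by Cauchy's theorem. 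Your route through the geodesic from $x$ to $x_0$ is legitimate (the cusp form decays exponentially along both the geodesic and the ray $(x_0)$, and $(z-x)^{\alpha-1}$ is integrable at the corner $z=x$), so nothing is missing conceptually.

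One concrete warning: all three of your displayed Möbius identities are written upside down. With $\sigma^{-1}=\left(\begin{smallmatrix}a'&b'\\c'&d'\end{smallmatrix}\right)$ one has $z-x=(w-\sigma^{-1}x)\,(c'z+d')(c'x+d')$, $dz=(c'z+d')^{2}\,dw$ and $f(z)=(c'z+d')^{-r}f^{\sigma}(w)$. Because the errors are consistent, the powers of $c'z+d'$ still cancel either way, but the surviving prefactor computed from your identities would be $(c'x+d')^{-(r-2)}=(c'(x-x_0))^{2-r}$, which contradicts both the factor $|x-x_0|^{2\alpha}(x-x_0)^{-r}$ in Theorem~\ref{thm:func_eq} and the answer $(c'x+d')^{r-2}$ you then assert. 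With the corrected identities the prefactor is $(c'(x-x_0))^{r-2}$ and everything matches (up to the branch/phase bookkeeping you already flag, which is where the paper's constant $C_1=C_0e^{-2\pi i\alpha}$ for $x<x_0$ and the sign convention $c<0$ enter). The final observation that for integer $r$ the binomial expansion of $(z-x)^{r-2}$ exhibits $E$ as a polynomial in the Eichler--Shimura periods is exactly right and is all the paper intends by its last sentence.
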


Theorem~\ref{thm:local_irrat} shows that when $f$ is not a cusp form the pointwise Hölder exponent $\beta$ of $f_\alpha$ at the irrational numbers ranges in a continuum between the values $\alpha - r$ and $\alpha - r/2$. An interesting concept to study in this case is that of the spectrum of singularities, which is defined as the function $d: [0, +\infty) \rightarrow [0, 1]\cup\{-\infty\}$ associating to each $\delta \geq 0$ the Hausdorff dimension of the set $\{x : \beta(x) = \delta\}$ if this set is nonempty and $-\infty$ otherwise (\emph{cf.} \cite{chamizo_petrykiewicz_ruiz, jaffard}). If the image of $d$ is not discrete then it is said that $f_\alpha$ is a multifractal function.

\begin{theorem}[Spectrum of singularities]\label{thm:spectrum}
Let $d$ be the spectrum of singularities of either $f_\alpha$, $\Re f_\alpha$ or $\Im f_\alpha$. Then:
\begin{enumerate}
\item
If $f$ is a cusp form:
\[d(\delta) = \begin{cases}
1 & \delta = \alpha - r/2 \\
0 & \delta = 2\alpha - r \\
-\infty & \text{in other cases}.
\end{cases}\]
\item
If $f$ is not a cusp form:
\[d(\delta) = \begin{cases}
2 + 2\frac{\delta - \alpha}{r} & \alpha - r \leq \delta \leq \alpha - r/2 \\
0 & \delta = 2\alpha - r \\
-\infty & \text{in other cases}.
\end{cases}\]
\end{enumerate}
\end{theorem}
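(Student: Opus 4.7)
The approach is to combine the pointwise Hölder data of Theorems~\ref{thm:local_rat} and \ref{thm:local_irrat}, which pin down $\beta(x)$ explicitly in terms of whether $x$ is rational (cuspidal or not) and, at irrationals, in terms of the Diophantine exponent $\tau_x$, with a Jarník--Besicovitch-type dimension theorem adapted to non-cuspidal rationals. The cusp form case reduces to a trivial dichotomy: Theorem~\ref{thm:local_rat} gives $\beta(x) = 2\alpha - r$ on the countable set $\mathbb{Q}$, and Theorem~\ref{thm:local_irrat} gives $\beta(x) = \alpha - r/2$ on $\mathbb{R} \setminus \mathbb{Q}$, so the level sets have Hausdorff dimensions $0$ and $1$ respectively, and any other $\delta$ yields the empty set.

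In the non-cusp case the same two theorems produce $\beta(x) = 2\alpha - r$ at cuspidal rationals, $\beta(x) = \alpha - r$ at non-cuspidal rationals, and $\beta(x) = \alpha - (1 - 1/\tau_x)r$ at irrationals. Solving for $\tau_x$ yields the bijection $\tau(\delta) := r/(r + \delta - \alpha)$, which lies in $[2, \infty]$ precisely when $\delta \in [\alpha - r, \alpha - r/2]$; since $\alpha > r$ in this case, the isolated value $2\alpha - r$ sits strictly above this interval and the level sets stay cleanly separated. Consequently $\{x : \beta(x) = 2\alpha - r\}$ is the countable set of cuspidal rationals (dimension $0$), the set $\{x : \beta(x) = \alpha - r\}$ is the union of the non-cuspidal rationals (countable) and the Liouville-type set $\{x \notin \mathbb{Q} : \tau_x = \infty\}$ (dimension $0$ by the adapted theorem below), while for $\delta \in (\alpha - r, \alpha - r/2]$ the level set coincides, up to a countable set, with $\{x \notin \mathbb{Q} : \tau_x = \tau(\delta)\}$.

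The crux of the proof, and the main obstacle, is therefore to establish an adapted Jarník--Besicovitch theorem of the form
\[\dim_H \{x \in \mathbb{R} : \tau_x = \tau\} = \frac{2}{\tau} \qquad (\tau \geq 2),\]
with $\tau_x$ measured using \emph{only} non-cuspidal rationals. The upper bound comes for free, since $\tau_x$ is dominated by the usual irrationality exponent and so the classical Jarník--Besicovitch theorem furnishes efficient coverings. The lower bound is the delicate step: one must construct inside $\{x : \tau_x = \tau\}$ Cantor-like subsets of Hausdorff dimension $2/\tau$ while restricting all approximants to non-cuspidal rationals. The essential ingredients here are that the non-cuspidal rationals form a finite union of full $\Gamma$-orbits and that Hedlund's lemma supplies infinitely many non-cuspidal approximants at exponent $2$ for every irrational; these together let one mimic the classical Jarník construction inside a fixed orbit. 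Once this dimension theorem is in place, substituting $2/\tau(\delta) = 2 + 2(\delta - \alpha)/r$ into the previous decomposition produces the stated formula and closes the argument.
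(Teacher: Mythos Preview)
Your overall architecture matches the paper's exactly: reduce to Theorems~\ref{thm:local_rat} and~\ref{thm:local_irrat} to identify the level sets $\{\beta=\delta\}$, dispose of the cusp form case as a trivial dichotomy, and in the non-cusp case invert $\delta\mapsto\tau(\delta)=r/(r+\delta-\alpha)$ and appeal to an adapted Jarník--Besicovitch statement $\dim_H\{x:\tau_x=\tau\}=2/\tau$. The one substantive divergence is in how that adapted statement is obtained. You propose to \emph{rerun} the classical Jarník Cantor-set construction restricted to a single $\Gamma$-orbit of (non-cuspidal) rationals, which is feasible but laborious. The paper instead argues by symmetry: after replacing $\Gamma$ by a normal subgroup of finite index, $\slgroup_2(\mathbb{Z})$ acts on the cusp orbits and sends each approximation set $A_\tau^{\mathfrak{a}}$ onto $A_\tau^{\gamma(\mathfrak{a})}$ by a locally bi-Lipschitz Möbius map; hence all the $A_\tau^{\mathfrak{a}}$ share the same Hausdorff dimension and the same $\mathcal{H}^{2/\tau}$-measure. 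Since their (finite) union is the classical well-approximable set $A_\tau$, the classical Jarník--Besicovitch theorem immediately transfers both the dimension $2/\tau$ and the infinite $\mathcal{H}^{2/\tau}$-measure to each piece. This lets the paper import the hard lower bound wholesale rather than rebuilding it, and the infinite-measure statement is exactly what is then used to show that excising $\bigcup_{\tau'>\tau}A_{\tau'}^{\mathfrak{a}}$ (a $\mathcal{H}^{2/\tau}$-null set) does not drop the dimension of the level set $\{\tau_x=\tau\}$. Your route would ultimately need the same measure refinement to pass from $\{\tau_x\geq\tau\}$ to $\{\tau_x=\tau\}$, so the paper's transfer argument is both shorter and gives you that step for free.
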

The functions $f_\alpha$, $\Re f_\alpha$ and $\Im f_\alpha$ are therefore multifractal if and only if $f$ is not a cusp form.

\section{Preliminary lemmas}\label{sec:lemmas}

%\looseness=-1 % ADJUSTMENT
We include in this section some auxiliary results that will be used later on. The first ones describe some general aspects of the behavior of modular forms and their coefficients.

\begin{lemma}[Expansion at the cusps]\label{lem:cusp_expansion}
Let $p, q$ be coprime integers and $z = x + iy \in \mathbb{H}$. Let $m = m_{p/q}$ be the width of the cusp $p/q$. Suppose the quantity $|qz-p|^2/y$ remains uniformly bounded. Then:
\[f(z) = \frac{f(p/q)}{m^{r/2}\big(qz-p\big)^r} + O\left(y^{-r/2} e^{-Ky|qz-p|^{-2}}\right)\]
for some constant $K > 0$ independent of $p/q$.
\end{lemma}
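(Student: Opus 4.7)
The strategy is to use modularity to transport $z$, which lies near the cusp $p/q$, to a point with large imaginary part, and then read off the main term from the constant in the Fourier expansion at the cusp.

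First, I would fix $\gamma \in \slgroup_2(\mathbb{Z})$ with $\gamma(\infty) = p/q$ and bottom row $(-q, *)$ (so that the sign convention of \S\ref{sec:results} is satisfied); let $\sigma = \gamma\eta$ be the associated scaling matrix, where $\eta = \mathrm{diag}(\sqrt{m}, 1/\sqrt{m})$. Since $\det\sigma = 1$, the identity $f|_{\sigma} = f^\sigma$ rearranges to $f(z) = j_\sigma(\sigma^{-1}z)^{\,r}\, f^\sigma(\sigma^{-1}z)$. A direct computation using the cocycle relation $j_\sigma(\sigma^{-1}z) = 1/j_{\sigma^{-1}}(z)$ gives, up to the sign arising from the branch convention,
\[
j_\sigma(\sigma^{-1}z)^{\,r} = \frac{1}{m^{r/2}(qz-p)^{r}}.
\]
Moreover $\Im(\sigma^{-1}z) = y/|j_{\sigma^{-1}}(z)|^{2} = y/\bigl(m|qz-p|^{2}\bigr)$, which by hypothesis is bounded below by a positive constant $t_0 = 1/(mM)$.

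Next I would substitute the Fourier expansion (\ref{eq:fourier_expansion}) of $f^\sigma$ at the cusp $p/q$, isolating the constant term. If $p/q$ is non-cuspidal, this term equals $a_0^{p/q} = f(p/q)$ and produces the claimed main term; if $p/q$ is cuspidal, $f(p/q) = 0$ and the whole sum becomes error. The tail is
\[
\frac{1}{m^{r/2}(qz-p)^{r}}\sum_{n+\kappa_\mathfrak{a} > 0} a_n^{\mathfrak{a}}\, e^{2\pi i (n+\kappa_\mathfrak{a})\,\sigma^{-1}z},
\]
whose modulus is controlled by $|qz-p|^{-r}\sum (n+\kappa_\mathfrak{a})^{r-1} e^{-2\pi(n+\kappa_\mathfrak{a})t}$, with $t = y/(m|qz-p|^{2}) \geq t_0$, using the classical Hecke bound $|a_n^{\mathfrak{a}}| \ll (n+\kappa_\mathfrak{a})^{r-1}$. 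Letting $\kappa'$ denote the least positive value of $n+\kappa_\mathfrak{a}$, an elementary manipulation $\sum(n+\kappa_\mathfrak{a})^{r-1}e^{-2\pi(n+\kappa_\mathfrak{a})t} = e^{-\pi\kappa' t}\sum(n+\kappa_\mathfrak{a})^{r-1} e^{-\pi(2(n+\kappa_\mathfrak{a})-\kappa')t}$ shows that the latter sum is uniformly bounded for $t \geq t_0$, so the tail is $\ll |qz-p|^{-r}\, e^{-\pi\kappa' y/(m|qz-p|^{2})}$.

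To convert $|qz-p|^{-r}$ into the $y^{-r/2}$ factor of the statement, I would set $u = |qz-p|^{-2}$ and split the exponential in half: the function $u \mapsto u^{r/2} e^{-(c/2)yu}$ is maximized at $u \sim r/(cy)$ with value $\ll y^{-r/2}$, so
\[
u^{r/2} e^{-cyu} = \bigl(u^{r/2} e^{-(c/2)yu}\bigr)\,e^{-(c/2)yu} \ll y^{-r/2} e^{-(c/2)y/|qz-p|^{2}},
\]
which is precisely the desired bound with $K = c/2 = \pi\kappa'/(2m)$.

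The last step, and the most delicate point, is ensuring that all implicit constants, and in particular $K$, are independent of $p/q$. This is where the finiteness of the set of $\Gamma$-orbits of cusps enters: the width $m_\mathfrak{a}$, the parameter $\kappa_\mathfrak{a}$, the minimum $\kappa'$, and the sequence $\{a_n^\mathfrak{a}\}$ depend only on the orbit and on the translation freedom in the choice of $\sigma_\mathfrak{a}$ (which amounts to multiplication by unimodular constants and a cyclic shift, neither of which affects the Hecke bound). Taking a minimum over the finitely many orbits yields a uniform $K > 0$, concluding the proof.
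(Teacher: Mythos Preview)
Your proof is correct and follows the same route as the paper's: transport $z$ via the scaling matrix $\sigma_{p/q}$ to a point $w=\sigma^{-1}z$ of large imaginary part, read off the constant term of the Fourier expansion of $f^\sigma$ there, bound the tail by exponential decay, and obtain uniformity from the finiteness of cusp orbits. The paper's version is more condensed---it absorbs your explicit tail estimate and the $|qz-p|^{-r}\to y^{-r/2}$ conversion into the single phrase ``from where the desired expansion follows at once''---and does not invoke any coefficient bound (your ``Hecke bound'' step is harmless but unnecessary, since convergence of the Fourier series at a fixed height $\Im w\geq\delta$ already yields the $O(e^{-K'\Im w})$ tail).
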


\begin{proof}
A scaling matrix $\sigma = \sigma_{p/q}$ for the cusp $p/q$ has the following form:
\[\sigma^{-1} = \left( \begin{matrix} * & * \\ q\sqrt{m} & -p\sqrt{m} \end{matrix} \right).\]

Once we have fixed $\delta > 0$, from (\ref{eq:fourier_expansion}) one deduces that for some $K' > 0$,
\[f(\sigma w) = \big(j_{\sigma}(w)\big)^r f(p/q) + O\Big(\big|j_{\sigma}(w)\big|^r e^{-K' \Im{w}}\Big) \qquad (\Im{w} \geq \delta).\]
We now perform the change of variables $\sigma w = z$ and use $j_{\sigma}(w) = \big(j_{\sigma^{-1}}(z)\big)^{-1}$ and $\Im{w} = \Im{z} \big|j_{\sigma^{-1}}(z)\big|^{-2}$, from where the desired expansion follows at once. The constant $K$ may be chosen independent of $p/q$ because there are only finitely many equivalence classes of cusps.
\end{proof}

The condition $|qz-p|^2/y \leq \delta$ that appears in the statement of lemma~\ref{lem:cusp_expansion} has the geometric meaning of imposing that $z$ lies in the circle
\[\left\{z : \left| z - \frac{p}{q} - i\frac{\delta}{2q^2}\right| \leq \frac{\delta}{2q^2}\right\} = \gamma\big(\{\Im{z} \geq \delta^{-1}\}\big), \quad \gamma= \left( \begin{matrix} p & * \\ q & * \end{matrix} \right) \in \slgroup_2(\mathbb{Z})\]
These circles are called generalized Ford circles (Speiser circles). We will denote by $\mathcal{F}_{p/q}$ the generalized Ford circle tangent to $p/q$ with $\delta = 2$, and use the following property: $\bigcup \mathcal{F}_{p/q} \supset \{0 < \Im{z} < 1/2\}$. This is clear from the fact that the usual fundamental domain for $\slgroup_2(\mathbb{Z}) \backslash \mathbb{H}$ is contained in $\{\Im{z} \geq 1/2\}$.\footnote{The standard Ford circles (case $\delta = 1$) are intimately related to Farey sequences and diophantine approximation, as is beautifully explained in \cite{ford}.}

We recall that we have defined $\alpha_0$ as $r/2$ if $f$ is a cusp form and $r$ otherwise.

\begin{lemma}\label{lem:simple_bounds}
The modular form $f$ satisfies
\[f(z) \ll (\Im{z})^{-\alpha_0} \qquad (\Im{z} \rightarrow 0^+).\]
Conversely if $x$ is a fixed irrational number one has $f(x+iy) \gg y^{-r/2}$ for infinitely many values of $y \rightarrow 0^+$. If $x$ is a fixed non-cuspidal rational point then one has $f(x+iy) \gg y^{-r}$ for infinitely many values of $y \rightarrow 0^+$.

\end{lemma}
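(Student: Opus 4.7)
I would treat the three assertions separately, all via lemma~\ref{lem:cusp_expansion}.

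\textbf{Upper bound.} Since the Ford circles cover the strip $\{0 < \Im z < 1/2\}$, every $z = x+iy$ with $y$ small lies in some $\mathcal{F}_{p/q}$, and on this disc one has $y/|qz-p|^2 \geq 1/2$. Applying lemma~\ref{lem:cusp_expansion} at the cusp $p/q$ therefore yields
\[f(z) = \frac{f(p/q)}{m^{r/2}(qz-p)^r} + O(y^{-r/2}).\]
If $f$ is a cusp form, the leading term vanishes and $f(z) = O(y^{-r/2})$. Otherwise, the values $f(p/q)$ range over a bounded finite set, while $|qz-p| \geq qy \geq y$, so the leading term is $O(y^{-r})$. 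Either way we recover the bound $y^{-\alpha_0}$.

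\textbf{Rational lower bound.} Setting $z = p/q + iy$ directly makes $qz-p = iqy$, so the main term in lemma~\ref{lem:cusp_expansion} has exact magnitude $|f(p/q)|m^{-r/2}(qy)^{-r} \gg y^{-r}$, while the error $O(y^{-r/2}e^{-K/(q^2y)})$ is exponentially small in $1/y$ and hence negligible. Thus $|f(p/q+iy)| \gg y^{-r}$ for every sufficiently small $y$.

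\textbf{Irrational lower bound.} The plan is to exploit the $\Gamma$-invariance of $G(z) := (\Im z)^{r/2}|f(z)|$, distinguishing two cases. If $x$ is a quadratic surd, then $x$ is fixed by a hyperbolic element of $\slgroup_2(\mathbb{Z})$, a suitable power of which lies in $\Gamma$; its axis projects to a closed geodesic $\mathcal{C}$ in $\Gamma \backslash \mathbb{H}$, and the vertical ray through $x$, being asymptotic to this axis, projects to a curve that wraps around $\mathcal{C}$ infinitely many times as $y \to 0^+$. Since $f$ cannot vanish identically on $\mathcal{C}$ (its zeros being isolated in $\mathbb{H}$), $G|_{\mathcal{C}}$ attains a positive maximum, and continuity of $G$ together with the winding behaviour yields $G(x+iy_n) \geq c > 0$ for some sequence $y_n \to 0^+$. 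If instead $x$ is not a quadratic surd, classical ergodic theory of the geodesic flow on finite-volume hyperbolic surfaces guarantees that this projection is dense in $\Gamma \backslash \mathbb{H}$, and since $\{G > c\}$ is nonempty for small $c$, the orbit enters this open set infinitely often.

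\textbf{Main obstacle.} The delicate step is the irrational case, where one must handle both the winding behaviour for quadratic surds and the density for other irrationals, and furthermore ensure that these remain valid after passing from $\slgroup_2(\mathbb{Z})$ to the finite-index subgroup $\Gamma$. An alternative, more elementary route combines Dirichlet's theorem with the slash-operator identity $y^{r/2}|f(x+iy)| = (\Im w)^{r/2}|f^{\gamma^{-1}}(w)|$, where $w = \gamma(x+iy)$ lies in a fixed compact set; one then argues that the finitely many forms $|f^{\gamma}|$ (as $\gamma$ varies over $\Gamma \backslash \slgroup_2(\mathbb{Z})$) cannot all vanish along the cluster points of $\{w\}$, again by isolation of zeros of holomorphic functions.
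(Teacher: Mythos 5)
Your treatment of the upper bound and of the non-cuspidal rational lower bound is correct and coincides with the paper's proof: cover the strip $\{0<\Im z<1/2\}$ by the generalized Ford circles and apply lemma~\ref{lem:cusp_expansion} on each, respectively take $z=p/q+iy$ so that the main term has exact size $\asymp y^{-r}$ and the error is exponentially small.

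The irrational lower bound, however, has a genuine gap. Your primary route handles quadratic surds correctly (the vertical ray is asymptotic to the axis of a hyperbolic element, whose projected closed geodesic cannot lie in the zero set of $f$), but for the remaining irrationals you invoke ergodic theory to claim the projected ray is dense in $\Gamma\backslash\mathbb{H}$. Ergodicity only gives density for \emph{almost every} endpoint, not for every irrational one: there are irrationals that are not quadratic surds whose continued fraction digits avoid some value, and whose geodesic ray stays in a compact part of the surface without being dense (Artin's symbolic coding makes this explicit). So this step fails for uncountably many $x$. Your fallback route (Dirichlet approximation plus the slash identity, placing the images $w_n$ in a fixed compact set and appealing to isolation of zeros) is much closer to the paper's argument, but it omits the decisive step: you must exclude the possibility that \emph{all} cluster points of $\{w_n\}$ land on zeros of the finitely many functions $|f^{\gamma_i}|$ indexed by a right transversal of $\Gamma$ in $\slgroup_2(\mathbb{Z})$, and isolation of zeros alone does not do this, since the cluster set could consist of a single such zero. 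The paper resolves this by taking for $y_n$ the heights at which the vertical line crosses the boundaries of the generalized Ford circles of parameter $\delta$: these crossing points are then mapped onto the fixed horizontal segment $I_\delta=\{\Im w=\delta^{-1},\ |\Re w|\le 1/2\}$, and since each $g_i=g\circ\gamma_i$ has only finitely many zeros in a compact set, one may choose $\delta$ from an uncountable family of admissible values so that no $g_i$ vanishes on $I_\delta$, whence $g_i\ge c>0$ there by compactness. That tuning of $\delta$, forcing the images onto a zero-free horizontal segment, is the idea your proposal is missing.
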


\begin{proof}
Let $z = x+iy$ with $0 < y < 1/2$. By the previous remarks $z$ must be contained in some Ford circle $\mathcal{F}_{p/q}$, and hence lemma~\ref{lem:cusp_expansion} shows that $f(z) \ll y^{-r}$.  If $f$ is cuspidal at $p/q$ then by the same argument $f(z) \ll y^{-r/2}$; in particular this happens when $f$ is a cusp form. 

If $x$ is a non-cuspidal rational point then by lemma~\ref{lem:cusp_expansion} applied at $p/q = x$ we obtain $f(z) \gg y^{-r}$ as $y \rightarrow 0^+$.

(Lem. 3.4 of \cite{chamizo}) For the remaining case we consider the function $g(z) = y^{r/2}|f(z)|$. Since the multiplier $\mu_\gamma$ in (\ref{eq:modularity}) is unimodular it readily follows that $g$ is $\Gamma$-invariant. Now if $x$ is an irrational number then the line $\{\Re{z} = x\}$ cuts the boundary of infinitely many generalized Ford circles for any $\delta \geq 2$ at a sequence of points $x + iy_n$ with arbitrarily small $y_n$. For each of them we may find an element $\gamma \in \slgroup_2(\mathbb{Z})$ sending $x + iy_n$ to a point $w_n$ in the line $\{\Im{w} = \delta^{-1}\}$. We may further assume $-1/2 \leq \Re{w_n} \leq 1/2$ by composing $\gamma$ with a translation if necessary. The inverse of $\gamma$ can be decomposed as $\gamma^{-1} = \gamma' \gamma_i$, where $\gamma' \in \Gamma$ and $\gamma_i$ pertains to a fixed finite right transversal. Therefore $g(x+iy_n) = g(\gamma_i(w_n))$, for some point $w_n$ in the segment $I_\delta = \{\Im{w} = \delta^{ -1}, -1/2 \leq \Re{w} \leq 1/2\}$. Since each of the functions $g_i(z) = g\big(\gamma_i(z)\big)$ has a finite number of zeros in every compact subset of $\mathbb{H}$, we may choose $\delta$ so that none of the $g_i$ vanish on $I_\delta$, guaranteeing $g_i(w_n) \gg 1$. This proves $f(x+iy_n) \gg y_n^{-r/2}$.
\end{proof}

\begin{lemma}\label{lem:irrat_bounds}
Let $\tau \geq 2$ and $x_0$ an irrational number. The following holds:
\begin{enumerate}
\item
If all the non-cuspidal rationals $p/q$ satisfy
\begin{equation}\label{eq:dioph_1}
\left|x_0 - \frac{p}{q}\right| \gg \frac{1}{q^\tau}
\end{equation}
then $f(x+iy) \ll y^{-\left(1 - \frac{1}{\tau}\right)r} + y^{-r}|x-x_0|^{\frac{r}{\tau}}$ for $0 < y < 1/2$.
\item
If there are infinitely many non-cuspidal rationals $p/q$ satisfying
\begin{equation}\label{eq:dioph_2}
\left|x_0 - \frac{p}{q}\right| \ll \frac{1}{q^\tau}
\end{equation}
then $f(x_0 + iy) \gg y^{-\left(1 - \frac{1}{\tau}\right)r}$ for infinitely many values of $y \rightarrow 0^+$.
\end{enumerate}
\end{lemma}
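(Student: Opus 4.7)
My plan is to derive both parts from lemma~\ref{lem:cusp_expansion}, applied on the unique generalized Ford circle $\mathcal{F}_{p/q}$ containing the point where $f$ is being evaluated. Recall from the discussion after that lemma that every $z = x+iy$ with $0<y<1/2$ lies in some such $\mathcal{F}_{p/q}$, which is exactly the condition $|qz-p|^2/y \leq 2$ needed to invoke the expansion.

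For part 1, I would fix $z = x+iy$ and let $\mathcal{F}_{p/q}$ contain it. If $p/q$ is cuspidal for $f$, lemma~\ref{lem:cusp_expansion} yields $f(z) \ll y^{-r/2}$, which is absorbed into $y^{-(1-1/\tau)r}$ since $\tau \geq 2$. If $p/q$ is non-cuspidal, the same expansion gives $|f(z)| \ll |qz-p|^{-r} + y^{-r/2}$, so it suffices to bound $|qz-p|$ from below. I would use the triangle inequality $|qx-p| \geq |qx_0-p| - q|x-x_0| \gg q^{1-\tau} - q|x-x_0|$ coming from \eqref{eq:dioph_1}, and split into two cases. When $q|x-x_0|$ is at most half of the Diophantine lower bound for $|qx_0-p|$, one obtains $|qz-p| \gg \max(q^{1-\tau},\, qy)$; a short dichotomy according to whether $q \geq y^{-1/\tau}$ or not then forces $|qz-p| \gg y^{1-1/\tau}$, producing the first term of the claimed bound. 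In the opposite regime $q^{-1} \ll |x-x_0|^{1/\tau}$, and the trivial inequality $|qz-p| \geq qy$ directly produces the second term $y^{-r}|x-x_0|^{r/\tau}$.

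For part 2, I would select a sequence of non-cuspidal rationals $p_n/q_n$ satisfying~\eqref{eq:dioph_2} and set $y_n = q_n^{-\tau}$; this exponent is chosen so that $q_n y_n$ and the bound on $|q_n x_0 - p_n|$ are both of order $q_n^{1-\tau}$, which makes the point $z_n = x_0 + iy_n$ land in $\mathcal{F}_{p_n/q_n}$ (using $\tau \geq 2$) and forces $|q_n z_n - p_n| \ll q_n^{1-\tau}$. Lemma~\ref{lem:cusp_expansion} then produces a main term of size $|f(p_n/q_n)|\,q_n^{(\tau-1)r} \gg y_n^{-(1-1/\tau)r}$, dominating the error $y_n^{-r/2}$ because $1-1/\tau \geq 1/2$. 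The uniform lower bound $|f(p_n/q_n)| \gg 1$ follows from $|f|$ being constant on $\Gamma$-orbits of cusps together with the finiteness of the number of non-cuspidal orbits.

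The main obstacle I expect is the bookkeeping in part 1, specifically ensuring that the case split on $|x - x_0|$ cleanly produces the two terms of the bound while covering every possible size of $q$ relative to $y^{-1/\tau}$. Everything else is a fairly direct application of lemma~\ref{lem:cusp_expansion} together with the Ford circle geometry.
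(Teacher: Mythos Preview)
Your approach is essentially the paper's, and part~1 is fine: your case split on the size of $q|x-x_0|$ relative to the Diophantine lower bound is a harmless repackaging of what the paper does (it instead bounds $q^{-r}$ via~\eqref{eq:dioph_1} and splits $|x_0-p/q|^{r/\tau}\ll |x-p/q|^{r/\tau}+|x-x_0|^{r/\tau}$, then argues by cases on whether $y\leq |x-p/q|$).

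There is, however, a genuine gap in part~2 at $\tau=2$. With $y_n=q_n^{-\tau}$ the main term from lemma~\ref{lem:cusp_expansion} has size $\asymp q_n^{(\tau-1)r}=y_n^{-(1-1/\tau)r}$, while the error term is $O\big(y_n^{-r/2}e^{-Ky_n|q_nz_n-p_n|^{-2}}\big)$. For $\tau>2$ one has $y_n|q_nz_n-p_n|^{-2}\gg q_n^{\tau-2}\to\infty$, so the exponential kills the error and your claim that the main term dominates is correct. For $\tau=2$, though, $y_n|q_nz_n-p_n|^{-2}$ is merely bounded, and the exponential contributes only a fixed constant; since the implied constant in~\eqref{eq:dioph_2} can be arbitrarily large, the main term (of order $y_n^{-r/2}$) may be swallowed by the error (also of order $y_n^{-r/2}$). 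Your inequality ``$1-1/\tau\geq 1/2$'' becomes an equality here and does not separate the two. The paper sidesteps this by treating $\tau=2$ separately, quoting the lower bound $f(x_0+iy)\gg y^{-r/2}$ for irrational $x_0$ already established in lemma~\ref{lem:simple_bounds}, and running the Ford-circle argument only for $\tau>2$.
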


\begin{proof}
1) Let $z = x + iy$ with $0 < y < 1/2$. Then $z$ must be contained in one of the circles $\mathcal{F}_{p/q}$. We will use again the expansion at the cusp given by lemma~\ref{lem:cusp_expansion}. If $p/q$ is cuspidal for $f$ then:
\[f(x+iy) \ll y^{-r/2} \leq y^{-\left(1-\frac{1}{\tau}\right)r}.\]
If $p/q$ is not cuspidal we have
\[f(x+iy) \ll q^{-r} \left( \left(x - \frac{p}{q}\right)^2+y^2\right)^{-r/2}.\]
By hypothesis $p/q$ must satisfy (\ref{eq:dioph_1}) and therefore
\[q^{-r} \ll \left|x_0 - \frac{p}{q}\right|^{r/\tau} \ll \left|x - \frac{p}{q}\right|^{r/\tau} + |x - x_0|^{r/\tau}.\]
Hence:
\[f(x+iy) \ll \left|x - \frac{p}{q}\right|^{r/\tau} \left( \left(x - \frac{p}{q}\right)^2+y^2\right)^{-r/2} + y^{-r} |x - x_0|^{r/\tau}.\]
Arguing by cases depending on whether $y \leq |x-p/q|$ or not it is not hard to prove that the first term is $\ll y^{-\left(1-\frac{1}{\tau}\right)r}$.

2) The case $\tau = 2$ has already been established in lemma~\ref{lem:simple_bounds}, so we may assume $\tau > 2$. By hypothesis there must exist an equivalence class of non-cuspidal rationals modulo $\Gamma$ for which infinitely many satisfy (\ref{eq:dioph_2}). For any of those rationals $p/q$ we choose $z = x_0 + iy$ with $y = q^{-\tau}$ and note that
\[\frac{|qz-p|^2}{y} = q^{2+\tau} \left(\left|x_0 - \frac{p}{q}\right|^2 + y^2\right) \ll q^{2 - \tau}.\]
Applying lemma~\ref{lem:cusp_expansion} again we obtain:
\[|f(x_0 + it)| = C y^{-r/2} \Bigg(\frac{y}{|qz-p|^2}\Bigg)^{r/2} + O\left(y^{-r/2} e^{-Kq^{\tau - 2}}\right) \gg q^{(\tau - 1)r},\]
the constant $C$ not depending on $p/q$. Since $q^{(\tau - 1)r} = y^{-\left(1- \frac{1}{\tau}\right)r}$ this finishes the proof.
\end{proof}

\begin{lemma}\label{lem:partial_sums}
The partial sums in the Fourier expansion (\ref{eq:simple_fourier_exp}) satisfy
\[\sum_{n = 0}^N a_n e^{2 \pi i n x} \ll  N^{\alpha_0}\log{N}.\]
\end{lemma}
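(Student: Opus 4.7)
The plan is to represent the partial sum $S_N(x) := \sum_{n=0}^{N} a_n e^{2\pi i n x}$ as an integral of $f$ along a horizontal line $\Im z = y > 0$ against a Dirichlet-type kernel, and then exploit the growth bound from lemma~\ref{lem:simple_bounds} with a judicious choice of $y$. Under the running simplifying assumption $m_\infty = 1$, $\kappa_\infty = 0$, the Fourier coefficient formula $a_n = \int_0^1 f(t+iy)\,e^{-2\pi i n(t+iy)}\,dt$, multiplied by $e^{2\pi i n x}$ and summed over $n = 0, \dots, N$ via a geometric sum, produces
\[S_N(x) = \int_0^1 f(t+iy)\, D_N^{*}(t-x+iy)\,dt, \qquad D_N^{*}(w) := \sum_{n=0}^{N} e^{-2\pi i n w} = \frac{1 - e^{-2\pi i(N+1) w}}{1 - e^{-2\pi i w}}.\]

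I would then estimate the two factors in the integrand separately. For $f(t+iy)$, lemma~\ref{lem:simple_bounds} provides the uniform bound $|f(t+iy)| \ll y^{-\alpha_0}$ valid for $0 < y < 1/2$. For the kernel, the exact identity
\[|1 - e^{-2\pi i w}|^2 = (e^{2\pi y} - 1)^2 + 4 e^{2\pi y}\sin^2(\pi \Re w) \qquad (y := \Im w > 0)\]
combined with the $1$-periodicity of $D_N^{*}$ in $\Re w$ yields
\[|D_N^{*}(w)| \ll \frac{1 + e^{2\pi (N+1) y}}{\sqrt{d(\Re w)^2 + y^2}},\]
where $d(\cdot)$ denotes the distance to the nearest integer. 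The choice $y = 1/N$ makes the numerator a bounded constant, while the bound on $f$ becomes $N^{\alpha_0}$.

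Combining these estimates gives
\[|S_N(x)| \ll N^{\alpha_0}\int_0^1 \frac{dt}{\sqrt{d(t-x)^2 + N^{-2}}},\]
and by the $1$-periodicity of $d(\cdot)$ the remaining integral reduces to $\int_{-1/2}^{1/2}(u^2 + N^{-2})^{-1/2}\,du = O(\log N)$, yielding the claimed bound. The only delicate point is the lower bound on $|1 - e^{-2\pi i w}|$ near $w \in \mathbb{Z}$, which follows from the expansion $1 - e^{-2\pi i w} = 2\pi i w + O(w^2)$ at the origin together with periodicity; all remaining steps are routine.
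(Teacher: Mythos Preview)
Your argument is correct and is essentially the paper's own proof: convolve $f(\cdot+i/N)$ against a Dirichlet-type kernel, bound $f$ by $N^{\alpha_0}$ via lemma~\ref{lem:simple_bounds}, and estimate the $L^1$ norm of the kernel on the line $\Im w = 1/N$ by $\log N$. The only cosmetic differences are that the paper uses the symmetric kernel $D_N(w)=\sum_{|n|\le N}e^{2\pi i n w}$ and simply cites the classical bound $\|D_N\|_1\ll\log N$, whereas you use the one-sided kernel and spell out the denominator estimate explicitly.
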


\begin{proof}
(Lem. 3.2 of \cite{chamizo}) Using the Dirichlet kernel $D_N(z) = \sum_{|n| \leq N} e^{2 \pi i n z}$ we may write
\[\left| \sum_{n = 0}^N a_n e^{2 \pi i nx} \right| \ll \int_0^1 |f(u + i/N)| |D_N(x-u-i/N)|\, du.\]
We apply lemma~\ref{lem:simple_bounds} to bound the first factor by $N^{\alpha_0}$. Since $\| D_N \|_1 \ll \log{N}$ we obtain the estimate.
\end{proof}

\begin{lemma}\label{lem:coef_growth}
If $f$ is a cusp form then the coefficients in the expansion (\ref{eq:simple_fourier_exp}) satisfy
\[C_1 N^r \leq \sum_{n \leq N} |a_n|^2 \leq C_2 N^r\]
for some $C_1, C_2 > 0$.
\end{lemma}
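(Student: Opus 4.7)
The starting point is Parseval's identity applied to the Fourier expansion~(\ref{eq:simple_fourier_exp}) on a horizontal line. Since $f$ is cuspidal at $\infty$ one has $a_0 = 0$, so for $y > 0$,
\[\int_0^1 |f(x+iy)|^2 dx = \sum_{n \geq 1} |a_n|^2 e^{-4\pi ny}.\]
For the upper bound, the crucial observation is that when $f$ is a cusp form the function $y^{r/2}|f(z)|$ is $\Gamma$-invariant, continuous on $\mathbb{H}$, and decays exponentially at every cusp; it is therefore uniformly bounded by some constant $M$. Inserting $|f(z)|^2 \leq M^2 y^{-r}$ into the identity above and setting $y = 1/N$, the inequality $e^{-4\pi n/N} \geq e^{-4\pi}$ valid for $n \leq N$ yields $\sum_{n \leq N}|a_n|^2 \leq e^{4\pi}M^2 N^r$, so one may take $C_2 = e^{4\pi}M^2$.

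For the lower bound, which is the main obstacle, I would invoke the Rankin-Selberg method. Up to a gamma factor, a Mellin transform of the identity above represents the Dirichlet series $D(s) := \sum_{n \geq 1}|a_n|^2 n^{-s}$ as $\int_0^\infty \Phi(y) y^{s-1}\,dy$, where $\Phi(y) = \int_0^1|f(x+iy)|^2 dx$. Unfolding this integral against a suitable real-analytic Eisenstein series for $\Gamma$ provides a meromorphic continuation of $D(s)$ with a simple pole at $s = r$ whose residue is a positive multiple of the Petersson norm $\|f\|^2$, which is nonzero because $f$ is a nonzero cusp form. Since the coefficients $|a_n|^2$ are nonnegative and there is no other pole on the line $\Re s = r$, the Wiener-Ikehara Tauberian theorem yields $\sum_{n \leq N}|a_n|^2 \sim cN^r$ for some $c > 0$, which produces the constant $C_1$.

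The Rankin-Selberg machinery extends to half-integral weights and arbitrary unitary multiplier systems with only minor bookkeeping, so the argument covers every cusp form considered in the paper. The essential non-trivial input is precisely the positivity of the residue at $s = r$, which encodes the non-vanishing of $\|f\|^2$; everything else is standard Fourier analysis and Tauberian theory.
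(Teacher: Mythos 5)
Your proof is correct, but the lower bound travels a genuinely different road from the paper's. The upper bound is identical: both you and the paper observe that $y^{r/2}|f(z)|$ is $\Gamma$-invariant and hence globally bounded for a cusp form, and feed this into Parseval at height $y=1/N$. For the lower bound the paper stays entirely elementary: it shows that $g(z)=(\Im z)^{r/2}|f(z)|$ is bounded below by a positive constant on a subset of $[0,1]$ of positive measure on each line $\Im z = 1/N$ (via a Farey-type dissection into intervals around rationals $p/q$ with $q\asymp N^{1/2}$, mapping each back to a fixed compact horizontal strip by $\Gamma$-invariance), which gives $\sum_n |a_n|^2 e^{-4\pi n/N}\gg N^r$; it then converts this exponentially weighted sum into a sharp-cutoff bound by subtracting the tail $\sum_{n>KN}$, estimated by partial summation against the upper bound, and taking $K$ large. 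You instead invoke Rankin--Selberg unfolding plus Wiener--Ikehara. This is sound --- $y^r|f|^2$ is $\Gamma$-invariant irrespective of the weight and multiplier system, the unfolded integral produces $D(s)$ up to a gamma factor, and the pole of the Eisenstein series at $s=1$ with residue $1/\mathrm{vol}(\Gamma\backslash\mathbb{H})$ yields a simple pole of $D$ at $s=r$ with residue a positive multiple of $\|f\|^2$ --- and it buys more, namely a true asymptotic $\sum_{n\le N}|a_n|^2\sim cN^r$ rather than two-sided bounds of the right order. The cost is a much heavier black box: $\Gamma$ is only assumed to be of finite index in $\slgroup_2(\mathbb{Z})$, not a congruence subgroup, so the meromorphic continuation of $E(z,s)$ to $\Re s\ge 1$ with no pole on that line other than $s=1$ must be taken from Selberg's general spectral theory of cofinite Fuchsian groups; you should flag this dependence explicitly, since it is by far the deepest input in either argument, whereas the paper's proof uses nothing beyond Parseval and the geometry of Ford circles already developed in its Section 3.
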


\begin{proof}
(Lem. 3.2 of \cite{chamizo}) Let $g(z) = (\Im{z})^{r/2}|f(z)|$. We claim that there are constants $C, C' > 0$ such that $|\{x : g(x+i/N) > C\} \cap [0,1]| > C'$ for every integer $N \geq 0$. Because $f$ is cuspidal the function $g$, being $\Gamma$-invariant and bounded in a fundamental domain for $\Gamma \backslash \mathbb{H}$ must be globally bounded. Using Parseval's identity,
\[N^r \ll N^r \int_0^1 |g(u+i/N)|^2\, du = \sum_n |a_n|^2 e^{-4\pi n/N} \ll N^r.\]
The upper bound implies at once
\[\sum_{n \leq N} |a_n|^2 \ll N^r.\]
On the other hand,
\begin{align*}
\sum_{n \leq KN} |a_n|^2 &\geq \sum_n |a_n|^2 e^{-4\pi n/N} - \sum_{n > KN} |a_n|^2 e^{-4\pi n/N} \\
&\gg N^r - C''e^{-2\pi K} N^r,
\end{align*}
where the sum after the minus sign has been estimated summing by parts and using the upper bound. We may now choose $K$ big enough to finish the proof.

We still have to justify the previous claim. Let $C_1, C_2 > 0$ be constants to be determined later and consider the intervals $|x-p/q| \leq C_2/(qN^{1/2})$ with $C_1 N^{1/2} < q < C_2 N^{1/2}$. For $2C_2^3 < C_1$ these are disjoint and cover a positive portion of the interval $[0, 1]$. Suppose that $z = x + i/N$ with $x$ lying in one of those intervals and let $\gamma \in \slgroup_2(\mathbb{Z})$ satisfying $\gamma(p/q) = \infty$. We may decompose $\gamma^{-1} = \gamma' \gamma_i$, where $\gamma' \in \Gamma$ and $\gamma_i$ lies in a fixed finite right-transversal for $\Gamma$. Hence $g(z) = g_i(\gamma z)$ where $g_i(z) = g(\gamma_i z)$. It can be readily checked that $1/(2C_2^2) \leq \Im(\gamma z) \leq 1/C_1^2$, hence it suffices to show that we may choose $C_1$ and $C_2$ to ensure that every $g_i$ is bounded below in that strip. But this follows from the fact that $g_i(z)/(\Im{z})^{r/2} = |f^{\gamma_i}(z)|$ has a Fourier expansion (\ref{eq:fourier_expansion}).
\end{proof}

The following lemma provides an integral representation for $f_\alpha$.

\begin{lemma}\label{lem:integ_rep}
For $\alpha > \alpha_0$ the series (\ref{eq:fractal_integ}) converges uniformly to a continuous function $f_\alpha$, which admits the following integral representation
\begin{equation}\label{eq:integ_rep_2}
f_\alpha(x) = \frac{(2\pi)^\alpha}{(im)^\alpha \Gamma(\alpha)} \int_{(x)} (z-x)^{\alpha - 1} \big(f(z) - f(\mathfrak{\infty})\big)\, dz.
\end{equation}
\end{lemma}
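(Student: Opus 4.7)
The plan is twofold: first establish uniform convergence of the series, then prove the integral representation by a term-by-term computation combined with dominated convergence. Throughout I work in the normalization $m = 1$, $\kappa = 0$ which is in force for \S\S\ref{sec:lemmas}--\ref{sec:proofs}.

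For the uniform convergence, set $A_N(x) = \sum_{n=1}^N a_n e^{2\pi i n x}$, so that lemma~\ref{lem:partial_sums} gives $|A_N(x)| \ll N^{\alpha_0}\log N$ uniformly in $x$. Summation by parts rewrites the tail of $\sum a_n n^{-\alpha} e^{2\pi i n x}$ as a linear combination of $A_n\bigl(n^{-\alpha} - (n+1)^{-\alpha}\bigr)$ and boundary terms; since $n^{-\alpha} - (n+1)^{-\alpha} \ll n^{-\alpha-1}$ and $\alpha > \alpha_0$, the tails are dominated by $\sum_{n \geq M} n^{\alpha_0-\alpha-1}\log n$, which vanishes uniformly in $x$ as $M \to \infty$. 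Hence $f_\alpha$ is a continuous function on $\mathbb{R}$.

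For the integral representation, start from $\Gamma(\alpha) = \int_0^\infty t^{\alpha-1}e^{-t}\,dt$ and substitute $t = -2\pi i n(z-x)$, with $z$ running along the vertical ray $(x)$ so that $t$ covers $(0, \infty)$. Using the branch convention $-\pi < \arg z \leq \pi$ one obtains
\[\frac{e^{2\pi i n x}}{n^\alpha} = \frac{(2\pi)^\alpha}{i^\alpha \Gamma(\alpha)}\int_{(x)}(z-x)^{\alpha-1}e^{2\pi i n z}\,dz.\]
Multiplying by $a_n$ and summing over $1 \leq n \leq N$ (a finite exchange, trivially valid) yields
\[\sum_{n=1}^N \frac{a_n}{n^\alpha}e^{2\pi i n x} = \frac{(2\pi)^\alpha}{i^\alpha \Gamma(\alpha)}\int_{(x)}(z-x)^{\alpha-1}\,S_N(z)\,dz,\]
where $S_N(z) = \sum_{n=1}^N a_n e^{2\pi i n z}$. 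Now let $N \to \infty$: the left side tends to $f_\alpha(x)$ by the previous step, while $S_N(z) \to f(z) - f(\infty)$ pointwise on $\mathbb{H}$. To pass the limit inside the integral I would apply dominated convergence, which requires a uniform-in-$N$ bound on $|S_N(x+is)|$ that is integrable against $s^{\alpha-1}$.

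The main obstacle is precisely this uniform bound near the vertex $s = 0$, where the Fourier series does not converge absolutely and $f$ itself can grow like $s^{-\alpha_0}$. I would overcome it with a second application of Abel summation: combining $|A_n| \ll n^{\alpha_0}\log n$ with $|e^{-2\pi n s} - e^{-2\pi(n+1)s}| \leq 2\pi s\,e^{-2\pi n s}$ and the standard asymptotic $\sum n^{\alpha_0}\log n\cdot e^{-2\pi n s} \asymp s^{-\alpha_0-1}\log(1/s)$ yields $|S_N(x+is)| \ll s^{-\alpha_0}\log(1/s)$ uniformly in $N$. Together with the factor $|(z-x)^{\alpha-1}| = s^{\alpha-1}$ and the exponential decay of the $S_N$ for $s$ large (each $|a_n| \ll n^{\alpha_0}$ so that $|S_N(x+is)| \leq \sum n^{\alpha_0}e^{-2\pi n s}$ is bounded away from the axis), this produces an integrable dominating function as soon as $\alpha > \alpha_0$, which is exactly the hypothesis.
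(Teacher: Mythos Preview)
Your argument is correct. The uniform-convergence step matches the paper's exactly, and your integral representation is sound: the Abel-summation bound $|S_N(x+is)| \ll s^{-\alpha_0}\log(1/s)$ uniformly in $N$ is valid (the boundary term $A_N e^{-2\pi Ns}$ is maximized at $N\asymp 1/s$ and gives the same size), so the dominating function $s^{\alpha-\alpha_0-1}\log(1/s)$ is integrable near $0$ precisely when $\alpha>\alpha_0$. One minor imprecision: lemma~\ref{lem:partial_sums} only gives $|a_n|\ll n^{\alpha_0}\log n$, not $n^{\alpha_0}$, but the extra logarithm is harmless for the large-$s$ tail.

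The paper takes a slightly different route: instead of bounding the \emph{partial} sums $S_N$ uniformly in $N$ on the real axis, it first writes the identity for $f_\alpha(x+iy)$ with $y>0$ (where term-by-term integration is justified by absolute convergence on $\{\Im z\geq y\}$), and then lets $y\to 0^+$. The left-hand side converges because Abel summation recovers the limit of a convergent Fourier series; the right-hand side passes to the limit by dominated convergence using only the bound $f(z)\ll(\Im z)^{-\alpha_0}$ from lemma~\ref{lem:simple_bounds}. This avoids the second Abel summation you perform and uses a bound on $f$ itself rather than on its truncated Fourier polynomials. Your approach is a bit more hands-on but self-contained; the paper's is shorter because the needed estimate on $f$ is already available.
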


\begin{proof}
Summing by parts (\ref{eq:fractal_integ}) and using the estimates for partial sums given in lemma~\ref{lem:partial_sums} it is plain that the series converges uniformly and hence to a continuous function.

To prove the integral representation we start with
\[f_\alpha(x+iy) = \frac{(2\pi)^\alpha}{m^\alpha\Gamma(\alpha)} \int_0^\infty  t^{\alpha - 1} \big(f(x + iy + it) - f(\infty)\big)\, dt,\]
identity that can be obtained from (\ref{eq:simple_fourier_exp}) integrating the series term by term because of the uniform convergence in the region $\Im{z} \geq y$. Now it suffices to take the limit $y \rightarrow 0^+$ on both sides. The left hand side corresponds to the Abel summation of a converging Fourier series, while in the right hand side the dominated convergence theorem applies with the bounds obtained in lemma~\ref{lem:simple_bounds}.
\end{proof}

Our last lemma is a very particular version of the differentiation under the integral sign theorem.

\begin{lemma}\label{lem:diff}
Let $\gamma \in \slgroup_2(\mathbb{R})$ and let $I$ be a bounded open interval whose closure does not contain the pole of $\gamma$. Let $g(z, x)$ be a function continuously differentiable with respect to $x$ in $I$ and analytic for $z \in \mathbb{H}$. Assume moreover that both $g$ and $g_x$ have exponential decay when $\Im{z} \rightarrow +\infty$ in vertical strips, and that for some $\beta > 0$, $\eta > 0$ they satisfy the following estimates when $z \rightarrow \gamma(x)$ uniformly in $x \in I$:
\begin{align*}
g(z, x) &= O\big((z-\gamma x)^{\beta + \eta - 1}(\Im{z})^{-\eta}\big) \\
g_x(z, x) &= O\big((z-\gamma x)^{\beta + \eta - 2}(\Im{z})^{-\eta}\big)
\end{align*}
Then the function
\[F(x) = \int_{(\gamma x)} g(z, x) \, dz \qquad (x \in I)\]
is in $\Lambda^{\beta}(I)$ for $0 < \beta < 1$, in $\Lambda^1_{\log}(I)$ for $\beta = 1$ and in $\mathcal{C}^{1, 0}(I)$ for $\beta > 1$. In this last case,
\[F'(x) = \int_{(\gamma x)} g_x(z, x) \, dz \qquad (x \in I).\]
\end{lemma}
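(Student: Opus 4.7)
The plan is to parametrize the integration path as $z = \gamma x + it$, so that $F(x) = i\int_0^\infty g(\gamma x + it, x)\,dt$, and to estimate the increment $F(x+h) - F(x)$ directly. Writing $a_0 = \gamma x$, $a_1 = \gamma(x+h)$ (with $|a_1 - a_0| \ll |h|$, since $\gamma'$ is bounded on $\bar I$), I would split the range $(0, \infty)$ at the two thresholds $t = 2|h|$ and $t = T_0$, where $T_0$ is a fixed height above which the assumed exponential decay of $g$ and $g_x$ is operative. On $(0, 2|h|)$ the hypothesis gives $|g(a_j + it, x_j)| \ll t^{\beta - 1}$, contributing $O(|h|^\beta)$; on $(T_0, \infty)$ the mean value theorem together with the exponential decay contributes $O(|h|)$.

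The main range is $(2|h|, T_0)$, where I would write
\[g(a_1 + it, x+h) - g(a_0 + it, x) = \int_{a_0}^{a_1} g_z(s + it, x+h)\, ds + \int_x^{x+h} g_x(a_0 + it, u)\, du,\]
the first integral taken along the real horizontal segment and using the analyticity of $g(\cdot, x+h)$. The $g_x$-term is bounded by $|h|\,t^{\beta-2}$ directly from the hypothesis, once one checks $|a_0 + it - \gamma u| \sim t$ for $t \geq 2|h|$. For the $g_z$-term, which is not assumed, I would extract the bound $|g_z(s+it, x+h)| \ll t^{\beta-2}$ from the $g$ hypothesis via Cauchy's integral formula applied to the disk of radius $t/4$ around $s+it$. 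Integrating in $t$ then yields $O(|h|^\beta)$ for $\beta < 1$, $O(|h|\log(1/|h|))$ for $\beta = 1$, and $O(|h|)$ for $\beta > 1$, uniformly on $I$. This establishes $F \in \Lambda^\beta(I)$ and $F \in \Lambda^1_{\log}(I)$ in the first two cases.

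For $\beta > 1$, the same decomposition divided by $h$ becomes a difference quotient to which dominated convergence applies, with integrable dominants $t^{\beta-2}$ (integrable near $0$ since $\beta > 1$) on $(2|h|, T_0)$ and exponential decay on $(T_0, \infty)$; the $(0, 2|h|)$ piece is $O(|h|^{\beta - 1}) \to 0$. Passing to the limit gives
\[F'(x) = i\int_0^\infty \bigl[\gamma'(x)\, g_z(\gamma x + it, x) + g_x(\gamma x + it, x)\bigr]\, dt.\]
The $g_z$-contribution vanishes by the fundamental theorem of calculus, since $\int_0^\infty i\,g_z(\gamma x + it, x)\,dt = g(\gamma x + it, x)\big|_{t=0}^{t=\infty} = 0$ (both endpoints are zero, the upper by exponential decay and the lower because $|g| \ll t^{\beta-1}$ with $\beta > 1$), leaving the claimed formula $F'(x) = \int_{(\gamma x)} g_x(z, x)\,dz$. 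Continuity of $F'$ then follows by a direct dominated convergence argument applied to this integral, confirming $F \in \mathcal{C}^{1, 0}(I)$. The main technical obstacle is the $g_z$ bound, since the hypothesis on $g$ is only asymptotic near $\gamma x$: one must verify that every Cauchy disk and every point $s+it$ invoked in the argument lies in the regime where the hypothesis is effective, which is exactly what motivates the split at $t = 2|h|$ and the separate treatment of $t > T_0$ via exponential decay.
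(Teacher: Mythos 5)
Your argument is correct, and it reaches the same error rates as the paper, but by a genuinely different mechanism. The paper's proof never touches $g_z$: it applies Cauchy's theorem to deform the contour of $F(x+h)$ onto the vertical ray based at $\gamma(x)$, truncated between heights $u$ and $v$, collecting the deformation cost in an error term $O\big(e^{-Kv} + u^\beta + hu^{\beta-1} + h^{\beta+\eta}u^{-\eta}\big)$; with $u = h$, $v = \infty$ this is $O(h^\beta)$, and the remaining integral of $g(z,x+h)-g(z,x)$ over the common segment is handled by the mean value theorem in the $x$-variable alone, using only the assumed bound on $g_x$. Because the main term already lives on the ray at $\gamma(x)$, the formula for $F'$ in the case $\beta>1$ drops out of dominated convergence with no extra term to cancel. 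You instead keep both rays, compare the integrands height by height, and split the increment into a horizontal piece controlled by $g_z$ and a vertical piece controlled by $g_x$; the price is that you must manufacture the bound $g_z \ll t^{\beta-2}$ from the hypothesis on $g$ via Cauchy's integral formula on disks of radius comparable to the height (and check, as you note, that those disks stay in the regime where the asymptotic hypothesis applies --- plus a routine boundedness argument on the compact band between that regime and the region of exponential decay, which contributes only $O(|h|)$). What your route buys is transparency in the derivative formula: the term $\gamma'(x)\int_{(\gamma x)} g_z\,dz$ appears explicitly and is killed by the fundamental theorem of calculus using $g \ll t^{\beta-1}$ with $\beta>1$ at the bottom endpoint, which explains \emph{why} the base-point motion does not contribute to $F'$; in the paper this cancellation is invisible, absorbed into the contour deformation. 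Both proofs are sound; the paper's is shorter because it avoids the $g_z$ estimate altogether.
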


\begin{proof}
Assume $x \in I$ and $h \neq 0$ satisfying $x+h \in I$. Using Cauchy's theorem together with the estimates for $g$ we can write for $0 < u < v$:
\begin{align*}\label{eq:diff_1}
F(x+h) - F(x) &=  \int_{\gamma(x) + iu}^{\gamma(x) + iv} \big(g(z, x+h) - g(z, x)\big) \, dz \\
&+ O\left(e^{-Kv} + u^\beta + hu^{\beta-1} + \frac{h^{\beta + \eta}}{u^{\eta}}\right). \nonumber
\end{align*}
It is clear now that $F$ must be continuous, as for each $\varepsilon$ we may choose $u$ and $v$ so that for $h$ small enough $|F(x+h) - F(x)| \leq \varepsilon$.

For the rest of the proof we choose $u = h$ and $v = +\infty$, so that the error term is of the form $O\big(h^\beta\big)$. By the mean value theorem:
\[|F(x+h) - F(x)| \ll  h\int_{\gamma(x) + ih}^{\gamma(x) + i\infty} \left|g_x(z, x_z) \right| \, |dz| + O\big(h^\beta\big).\]
Using the estimates for $g_x$ this last integral is of order $O\big(h^{\beta - 1}\big)$ for $0 < \beta < 1$ and of order $O(\log h)$ for $\beta = 1$. 

Suppose now that $\beta > 1$. The estimates for $g_x$ justify the use of the dominated convergence theorem, proving the existence and the formula for $F'$. Finally, the argument used to prove that $F$ is continuous can be applied directly to $F'$ substituting $\beta$ by $\beta - 1$ to conclude that $F'$ is also continuous.
\end{proof}

\section{Approximate functional equation}\label{sec:approx_eq}

Throughout this section we use the following notation: $\sigma$ stands for a fixed matrix in $\slgroup_2(\mathbb{R})$ whose bottom-left is negative and such that $f^\sigma$ is a modular form for a finite index subgroup of $\slgroup_2(\mathbb{Z})$, $x$ will denote an arbitrary real number different from $x_0 = \sigma(\infty)$ and $C_0 = (2\pi)^\alpha/\big(i^\alpha \Gamma(\alpha)\big)$.

To avoid unnecessary distractions we will hide some extra terms that appear during the subsequent manipulations inside the symbol $(\cdots)$; we will deal with them afterwards. The reader can check that all the missing terms appear in (\ref{eq:func_error_1}-\ref{eq:func_error_4}).

Splitting the integral on the right hand side of (\ref{eq:integ_rep_2}) and performing the change of variables $z = \sigma w$ we have:

\begin{align*}
f_\alpha(x) &= C_0 \int_x^{x+2i} (z-x)^{\alpha - 1} f(z)\, dz + (\cdots) \\
&= C_0 \int_S (\sigma w - x)^{\alpha - 1} \big(j_{\sigma}(w)\big)^{r-2} \big(f^\sigma(w) - f(x_0)\big)\, dw + (\cdots).
\end{align*}
where $S$ corresponds to a subarc of the halfcircle with endpoints $\sigma^{-1}(x)$ and $\sigma^{-1}(\infty)$. The term $f(x_0)$ has to be understood as the limit of $f^\sigma(z)$ when $\Im{z} \rightarrow \infty$, definition which agrees with the one given in \S\ref{sec:results} when $\sigma$ is a scaling matrix.

The integrand in the last equation has exponential decay when $\Im{w} \rightarrow +\infty$. Applying Cauchy's theorem to replace $S$ with two vertical rays starting at the endpoints of $S$ and projecting to $i\infty$:
\[f_\alpha(x) = C_0 \int_{(\sigma^{-1}x)} (\sigma w - x)^{\alpha - 1} \big(j_{\sigma}(w)\big)^{r-2} \big(f^\sigma(w) - f(x_0)\big)\, dw + (\cdots).\]
Let $C_1$ denote the constant $C_0 e^{-2 \pi i \alpha}$ if $x < x_0$ and $C_0$ otherwise. Substituting the relation $(\sigma w - x)j_{\sigma}(w) = (w-\sigma^{-1}x)j_{\sigma^{-1}}(x)$ \cite[(2.4)]{iwaniec}:
\[f_\alpha(x) = C_1 \big(j_{\sigma^{-1}}(x)\big)^{\alpha - 1} \int_{(\sigma^{-1}x)} (w - \sigma^{-1} x)^{\alpha - 1} \big(j_{\sigma}(w)\big)^{r-\alpha - 1} \big(f^\sigma(w) - f(x_0)\big)\, dw + (\cdots).\]
Let $\phi(w) = \big(j_{\sigma}(w)\big)^{r-\alpha - 1}$ and denote by $\phi(\sigma^{-1}x^+)$ the limit of $\phi(w)$ when $w \rightarrow \sigma^{-1}x$ from the upper half-plane. Adding and subtracting $\phi(\sigma^{-1}x^+) = \big(j_{\sigma^{-1}}(x)\big)^{\alpha - r + 1}$ and using that $j_{\sigma^{-1}}(x) = B_1 (x-x_0)$ for some constant $B_1 > 0$, we arrive to
\[f_\alpha(x) = B |x-x_0|^{2\alpha} (x-x_0)^{-r} f^\sigma_\alpha(\sigma^{-1}x) + (\cdots).\]

The terms we have omitted so far are the following ones:
\begin{align}
(\cdots) &= -C_0 \frac{(2i)^\alpha}{\alpha} f(\infty) + C_0 \int_{x+2i}^{x+i\infty} (z-x)^{\alpha - 1} \big(f(z) - f(\infty)\big)\, dz \label{eq:func_error_1}\\
&\quad+ C_0 f(x_0) \int_x^{x+2i}(z-x)^{\alpha - 1}\big(j_{\sigma^{-1}}(z)\big)^{-r}\, dz \label{eq:func_error_2}\\
&\quad+ C_0 \Bigg(\int_{x_0}^{x_0 + 2i} + \int_{x_0 + 2i}^{x+2i}\Bigg) (z-x)^{\alpha - 1} \left( f(z) - \frac{f(x_0)}{\big(j_{\sigma^{-1}}(z)\big)^r}\right)\, dz \label{eq:func_error_3} \\
&\quad+ C (x-x_0)^{\alpha - 1} \int_{(\sigma^{-1}x)} (w - \sigma^{-1} x)^{\alpha - 1} \big(\phi(w) - \phi(\sigma^{-1}x^+)\big) \big(f^\sigma(w) - f(x_0)\big)\, dw. \label{eq:func_error_4}
\end{align}
The terms (\ref{eq:func_error_1}) and (\ref{eq:func_error_3}) make sense for any $x \in \mathbb{R}$ and are infinitely many times differentiable with respect to this variable. The other ones are studied in the following lemmas, which complete the proof of theorem~\ref{thm:func_eq}:

\begin{lemma}\label{lem:func_2}
The term (\ref{eq:func_error_2}) admits an expansion of the form:
\[(\ref{eq:func_error_2}) = A i^{-\alpha} f(x_0)\psi(x-x_0) + E(x)\]
where $\psi$ is as in the statement of theorem~\ref{thm:func_eq}.
%\[\psi(x) = \begin{cases}
%x^{\alpha - r} & \alpha - r \notin \mathbb{Z} \\
%x^{\alpha - r} \log{x} & \alpha - r \in \mathbb{Z}
%\end{cases}.\]
The constant $A$ is real and nonzero and the error term $E(x)$ is infinitely many times differentiable.
\end{lemma}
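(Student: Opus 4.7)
First, I would parametrize the integration path in (\ref{eq:func_error_2}) by $z = x + it$, $t \in [0, 2]$, and use the relation $j_{\sigma^{-1}}(z) = B_1(z - x_0)$ with $B_1 > 0$ as used in the derivation above. With $y := x - x_0$, this rewrites (\ref{eq:func_error_2}) as $C_0 f(x_0) B_1^{-r} i^\alpha F(y)$ where
\[
F(y) := \int_0^2 t^{\alpha - 1}(y + it)^{-r}\, dt.
\]
The task thus reduces to extracting from $F(y)$ a singular part $K\psi(y)$ for a real nonzero constant $K$, modulo a $C^\infty$ remainder.

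The core idea is the scaling substitution $t = |y|u$, which yields
\[
F(y) = |y|^{\alpha - r}\int_0^{2/|y|} u^{\alpha - 1}(\epsilon + iu)^{-r}\, du, \qquad \epsilon = \sign(y).
\]
For $0 < \alpha < r$ the integral extended to $(0, \infty)$ is absolutely convergent and can be evaluated either by a contour rotation or by the classical Mellin identity $\int_0^\infty u^{\alpha-1}(z+u)^{-r}\, du = z^{\alpha-r}B(\alpha, r-\alpha)$, giving $K_+ = i^{-\alpha}B(\alpha, r-\alpha)$ and $K_- = i^{\alpha - 2r}B(\alpha, r-\alpha) = e^{i\pi(\alpha - r)}K_+$. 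The crucial observation is that the phase $e^{i\pi(\alpha-r)}$ is exactly $\psi(-|y|)/\psi(|y|)$ under the convention $-\pi < \arg \leq \pi$, so $K_\epsilon |y|^{\alpha - r} = K_+ \psi(y)$ uniformly in the sign of $y$. The leftover tail $|y|^{\alpha - r}\int_{2/|y|}^\infty$ is $C^\infty$ in $y$: expanding $(\epsilon + iu)^{-r}$ asymptotically at $u = \infty$ and integrating term by term produces monomials in $y$. For $\alpha \geq r$ with $\alpha - r \notin \mathbb{Z}$ the same scheme works once enough leading asymptotic terms are subtracted from the integrand to enforce convergence at infinity; the subtracted terms again give smooth polynomial contributions.

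The main obstacle is the integer case $\alpha - r = k \in \mathbb{Z}_{\geq 0}$, in which $B(\alpha, r - \alpha)$ has a simple pole and $y^{\alpha - r}$ collapses to the smooth monomial $y^k$. I would handle this by viewing the decomposition $F(y) = K_+(\alpha)\, y^{\alpha - r} + E_0(y, \alpha)$ as a meromorphic family in $\alpha$: since $F(y)$ is holomorphic in $\alpha$ for $\alpha > 0$, the pole of $K_+$ at $\alpha = r + k$ must be matched by a compensating pole in $E_0$. Combining them using the expansion
\[
y^{\alpha - r} = y^k + (\alpha - r - k)\, y^k \log y + O\big((\alpha - r - k)^2\big)
\]
produces a finite $y^k \log y$ term whose coefficient is the residue of $K_+$, while the remaining holomorphic parts reassemble into a smooth remainder; this delivers $\psi(y) = y^{\alpha - r}\log y$ as required. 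In both cases $A$ is a real nonzero multiple of $B_1^{-r}\Gamma(r - \alpha)/\Gamma(r)$ (or of the residue of this quantity at $\alpha = r + k$), and smoothness of $E(x)$ for $x \neq x_0$ is immediate since the integrand in (\ref{eq:func_error_2}) is smooth in $x$ there.
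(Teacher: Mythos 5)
Your proposal is correct and, at bottom, performs the same computation as the paper, but packages the key steps differently. The paper first observes that (\ref{eq:func_error_2}) vanishes unless $f$ is non-cuspidal at $x_0$, which forces $\alpha>\alpha_0=r$; it then splits $\int_0^{2i}z^{\alpha-1}(x+z)^{-r}\,dz$ at $2xi$, rescales the inner piece and integrates the Laurent expansion of $(1+x/z)^{-r}$ term by term on the outer piece, so that the coefficient of $x^{\alpha-r}$ appears as a regularized Beta-type integral whose nonvanishing is read off from the constant sign of the Taylor remainder of $(1-\xi)^{-r}$, the logarithm arises directly as the antiderivative of the $k=\alpha-r$ term of the series, and the case $x<0$ is reduced to $x>0$ via $g(x)=(-1)^{\alpha-r}\overline{g(-x)}$ together with the reality of the constant. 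You instead identify the constant with $i^{-\alpha}B(\alpha,r-\alpha)$ through the Mellin identity extended by subtraction of asymptotic terms (note that the convergent range $0<\alpha<r$ you start from never actually occurs here, so the regularized version is the only one needed), extract the $y^k\log y$ term from the collision of the pole of $\Gamma(r-\alpha)$ with the monomial $y^k$ by meromorphic continuation in $\alpha$, and treat $y<0$ by matching the phase $e^{i\pi(\alpha-r)}$ against $\psi(-|y|)/\psi(|y|)$. Your route buys nonvanishing of $A$ for free from the nonvanishing of $\Gamma(r-\alpha)$ (and, in the integer case, of the residue, which indeed reproduces the paper's coefficient $-\binom{-r}{\alpha-r}$), while the paper's route is more self-contained and makes the smooth remainder visibly a convergent power series. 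Two small cautions: your closing sentence only addresses smoothness of $E$ away from $x_0$, whereas the substantive claim is smoothness \emph{at} $x_0$ — this is covered by the power-series structure of your tail and subtracted terms, but should be said, and in the integer case one must verify that the $\alpha$-holomorphic parts reassemble into something smooth in $y$ at $y=0$; and your phase-matching identity $K_\epsilon|y|^{\alpha-r}=K_+\psi(y)$ holds only modulo a smooth monomial $i\pi y^k$ when $\alpha-r=k\in\mathbb{Z}$, which must be absorbed into $E$.
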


% TODO: Fix line break
\begin{lemma}\label{lem:func_4}
The term (\ref{eq:func_error_4}) lies both in $\mathcal{C}^{1, 0}\big(\mathbb{R} \setminus \{x_0\}\big)$ and in the class $O\big(|x-x_0|^{2\alpha -r + 1}\big)$ when $x \rightarrow x_0$.
\end{lemma}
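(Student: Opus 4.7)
The factor $(x-x_0)^{\alpha-1}$ is $C^{\infty}$ on $\mathbb{R}\setminus\{x_0\}$, so both assertions reduce to the corresponding ones for the integral
\[F(x):=\int_{(\sigma^{-1}x)}(w-\sigma^{-1}x)^{\alpha-1}\bigl(\phi(w)-\phi(\sigma^{-1}x^+)\bigr)\bigl(f^{\sigma}(w)-f(x_0)\bigr)\,dw.\]
For the first one I would invoke lemma~\ref{lem:diff} with $\gamma=\sigma^{-1}$, whose Möbius pole is $\sigma(\infty)=x_0$ and hence avoids the closure of any bounded interval in $\mathbb{R}\setminus\{x_0\}$. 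Since $\phi$ is analytic at the finite point $\sigma^{-1}x$, the difference $\phi(w)-\phi(\sigma^{-1}x^+)$ vanishes to first order there, and lemma~\ref{lem:simple_bounds} applied to $f^{\sigma}$ yields
\[g(w,x)\ll(w-\sigma^{-1}x)^{\alpha}(\Im w)^{-\alpha_{0}},\qquad g_{x}(w,x)\ll(w-\sigma^{-1}x)^{\alpha-1}(\Im w)^{-\alpha_{0}};\]
the exponential decay of $f^{\sigma}-f(x_0)$ in vertical strips supplies the remaining hypothesis. Lemma~\ref{lem:diff} then applies with $\beta=\alpha-\alpha_{0}+1>1$ and $\eta=\alpha_{0}$, giving $F\in\mathcal{C}^{1,0}(\mathbb{R}\setminus\{x_{0}\})$.

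For the bound near $x_0$ I would parametrize the ray by $w=\sigma^{-1}x+it$ and set $\xi:=j_{\sigma^{-1}}(x)$, so that $|\xi|=|c|\,|x-x_0|$. On the ray one has $j_{\sigma}(w)=\xi^{-1}+cit$, which produces the explicit factorization
\[\phi(w)-\phi(\sigma^{-1}x^{+})=\xi^{\alpha-r+1}\bigl[(1+i\xi c t)^{r-\alpha-1}-1\bigr].\]
Extracting $\xi^{\alpha-r+1}$ and combining with the outer $(x-x_0)^{\alpha-1}$ produces an overall prefactor of order $|x-x_0|^{2\alpha-r}$, and the statement reduces to
\[\int_{0}^{\infty}t^{\alpha-1}\bigl|(1+i\xi c t)^{r-\alpha-1}-1\bigr|\bigl|f^{\sigma}(\sigma^{-1}x+it)-f(x_0)\bigr|\,dt\ll|\xi|.\]
I would split this integral at $T:=1/|\xi c|$. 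On $[0,T]$ the bracket is $O(|\xi|t)$ by Taylor expansion; subdividing at $t=1$, lemma~\ref{lem:simple_bounds} gives a convergent piece $|\xi|\int_{0}^{1}t^{\alpha-\alpha_{0}}\,dt$ (since $\alpha>\alpha_{0}$), while on $[1,T]$ the exponential decay of $f^{\sigma}-f(x_{0})$ absorbs the integrand uniformly. On $[T,\infty)$ the bracket grows at worst like $(|\xi|t)^{|r-\alpha-1|}$ and the exponential decay then produces a contribution of order $e^{-K/|\xi|}$, which is negligible.

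The genuine obstacle is that as $x\to x_{0}$ the foot $\sigma^{-1}x$ of the integration ray drifts to $\pm\infty$ along the real axis, so the bounds on $f^{\sigma}-f(x_{0})$ used along the ray must hold \emph{uniformly in} $\Re w$; this is exactly what the Fourier expansion of $f^{\sigma}$ provides, yielding both the polynomial blow-up $(\Im w)^{-\alpha_{0}}$ near the real line and the exponential decay for large $\Im w$ independently of $\Re w$. Granting this uniformity, the rest is routine.
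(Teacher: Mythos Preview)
Your proof is correct and follows essentially the same route as the paper's. Both parts agree in detail: for the $\mathcal{C}^{1,0}$ claim you invoke Lemma~\ref{lem:diff} with the same parameters $\eta=\alpha_0$ and $\beta=\alpha-\alpha_0+1$ that the paper uses, and for the estimate near $x_0$ your split at $T=1/|\xi c|\asymp|x-x_0|^{-1}$ (with a further subdivision at $t=1$) coincides with the paper's three-piece decomposition; your explicit factorization $\phi(w)-\phi(\sigma^{-1}x^+)=\xi^{\alpha-r+1}\bigl[(1+i\xi ct)^{r-\alpha-1}-1\bigr]$ is just a repackaging of the paper's mean-value bound $\ll t\,|x-x_0|^{\alpha-r+2}$ on the first range. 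Your remark about the need for uniformity in $\Re w$ as the ray's foot drifts to infinity is a point the paper leaves implicit, and your bound $(|\xi|t)^{|r-\alpha-1|}$ on the tail is slightly crude when $r-\alpha-1<0$ (the bracket is then $O(1)$), but it is still a valid upper bound and the exponential decay absorbs it regardless.
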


\begin{proof}[Proof of lemma~\ref{lem:func_2}]
We may assume that $f$ is not cuspidal at $x_0$, since otherwise (\ref{eq:func_error_2}) is equal to zero. Note that in this case by hypothesis $\alpha > r$. Renaming $x-x_0$ to $x$ if necessary we may further assume $x_0 = 0$. Hence up to a nonzero constant of the form $A i^{-\alpha} f(x_0)$ we have to expand asymptotically the function
\begin{equation}\label{eq:func_1}
g(x) = \int_0^{2i} \frac{z^{\alpha - 1}}{(x + z)^{r}}\, dz.
\end{equation}
We will suppose for the moment that $x > 0$ and $\alpha - r \notin \mathbb{Z}$. We have
\[g(x) = x^{-r} \int_0^{2xi} \frac{z^{\alpha-1}}{\Big(1+\frac{z}{x}\Big)^r}\, dz + \int_{2xi}^{2i} \frac{z^{\alpha - r - 1}}{\Big(1+ \frac{x}{z}\Big)^r}\,dz.\]
In the first integral we perform a linear change of variables, while in the second one we substitute the Laurent expansion
\[\left(1+\frac{x}{z}\right)^{-r} = \sum_{k \geq 0} \binom{-r}{k} x^k z^{-k}\]
which is uniformly convergent in the region $|z| \geq 2x$. Integrating term by term the expression now results
\begin{align}
g(x) &= x^{\alpha-r}\int_0^{2i} \frac{z^{\alpha-1}}{(1+z)^r}\, dz + \sum_{k \geq 0} \binom{-r}{k} \frac{x^k}{\alpha - r - k} z^{\alpha - r - k} \Bigg|_{2xi}^{2i} \nonumber \\
&= x^{\alpha-r} \left( \int_0^{2i} \frac{z^{\alpha-1}}{(1+z)^r}\, dz - \sum_{k \geq 0} \binom{-r}{k} \frac{ (2i)^{\alpha - r - k}}{\alpha - r - k} \right) + h(x). \label{eq:func_2}
\end{align}
where $h(x)$ is a function given by a power series which converges in a neighborhood of $0$. Notice that the expression within brackets is a constant $A'$ satisfying
\[A' = \int_0^{T} \frac{z^{\alpha-1}}{(1+z)^r}\, dz - \sum_{k \geq 0} \binom{-r}{k} \frac{T^{\alpha - r - k}}{\alpha - r - k}\]
for any complex $T$ with $|T| > 1$ and  $\arg{T} \neq \pi$: the right hand side is indeed constant as can be easily checked by differentiating with respect to $T$. Hence
\begin{align*}
A' &= \lim_{T \rightarrow +\infty} \left( \int_0^{T} \frac{t^{\alpha-1}}{(1+t)^r}\, dt - \sum_{0 \leq k < \alpha - r} \binom{-r}{k} \frac{T^{\alpha - r - k}}{\alpha - r - k}\right) \\
&= \int_0^\infty t^{\alpha - 1} \left( \frac{1}{(1+t)^r} - \sum_{0 \leq k < \alpha - r}  \binom{-r}{k} \frac{1}{t^{r+k}}\right)\, dt.
\end{align*}
The sum corresponds to the Taylor expansion of order $[\alpha - r]$ of the function $(1-\xi)^{-r}$ multiplied by $\xi^r$ and evaluated at $\xi = 1/t$. Since all the derivatives of this function have constant sign for $\xi > 0$ we deduce $A' \neq 0$. Although the exact value of $A'$ is unimportant, using the integral formula for the error term in the Taylor expansion one can easily obtain a closed formula in terms of beta functions.

Suppose now that $\alpha - r$ is an integer. The same argument can be carried on, but when integrating the Laurent series term by term the term corresponding to $k = \alpha - r$ is now transformed into a logarithm. This term results
\[\binom{-r}{\alpha - r} x^{\alpha - r} \log{z} \Bigg|_{2xi}^{2i} = \binom{-r}{\alpha - r} x^{\alpha - r} \big(-\log(x/i) + \log{2} - \log{T}\big) \quad (T = 2i).\]
The first summand corresponds to the main term, while the other two should be merged into $A'$. This is relevant, as we will need $A' \in \mathbb{R}$ in order to handle the case $x < 0$. We may replace (\ref{eq:func_2}) with: 
\begin{equation}\label{eq:func_3}
g(x) = -\binom{-r}{\alpha - r} x^{\alpha - r} \log(x/i) + A' x^{\alpha - r} + h(x).
\end{equation}

Finally if $x < 0$, we go back to (\ref{eq:func_1}) and notice that
\[g(x) = (-1)^{\alpha - r} \overline{g(-x)},\]
and the very same equation is also satisfied by the main and error terms in equations (\ref{eq:func_2}-\ref{eq:func_3}). Therefore we may apply the results we have obtained for $x > 0$.
\end{proof}

\begin{proof}[Proof of lemma~\ref{lem:func_4}]
Because of the extra cancelation as $w \rightarrow \sigma^{-1}x$ provided by the second factor inside the integral in (\ref{eq:func_error_4}) and the exponential decay given by the third factor when $\Im{z} \rightarrow +\infty$, lemma~\ref{lem:diff} can be applied with $\eta = \alpha_0$ and $\beta + \eta = \alpha + 1$. This shows that (\ref{eq:func_error_4}) is in $\mathcal{C}^{1, 0}\big(\mathbb{R} \setminus \{x_0\}\big)$.

For the second estimate, it suffices to show that
\begin{equation}\label{eq:func_4}
\int_{(\sigma^{-1}x)} (w - \sigma^{-1} x)^{\alpha - 1} \big(\phi(w) - \phi(\sigma^{-1}x^+)\big) \big(f^\sigma(w) - f(x_0)\big)\, dw \ll |x-x_0|^{\alpha - r + 2}
\end{equation}
when $x \rightarrow x_0$. Notice that for $w = \sigma^{-1}x + it$ we have
\[\phi(w) = \big(j_{\sigma}(w)\big)^{r-\alpha - 1} = \left( \frac{1}{(-c)(x-x_0)} + ict\right)^{r - \alpha - 1}\]
where $c$ is the bottom-left entry of $\sigma$. Therefore applying the mean value theorem we obtain for $|x-x_0| \leq 1$:
\[|\phi(w) - \phi(\sigma^{-1}x^+)| \ll \begin{cases}
t|x-x_0|^{\alpha - r + 2} & t \leq |x-x_0|^{-1} \\
t^{r-\alpha-1} & t \geq |x-x_0|^{-1}
\end{cases}.\]
We divide now the integration domain in three intervals and use these estimates, together with the trivial ones for $f^\sigma$, concluding that the left hand side of (\ref{eq:func_4}) is
\begin{align*}
&\ll |x-x_0|^{\alpha - r + 2}\left(\int_0^1 t^\alpha \big(1+ t^{-\alpha_0}\big)\, dt + \int_1^{|x-x_0|^{-1}} t^\alpha e^{-Kt}\, dt\right) \\
&\quad+ \int_{|x-x_0|^{-1}}^\infty t^{r - 2} e^{-Kt}\, dt.
\end{align*}
This proves (\ref{eq:func_4}), since the first two integrals are convergent and the last one has exponential decay when $x \rightarrow x_0$.
\end{proof}

\begin{proof}[Proof of corollary~\ref{cor:eichler}]
If $f$ is a cusp form then (\ref{eq:func_error_2}) and the first summand of (\ref{eq:func_error_1}) vanish. Moreover since $\alpha = r-1$ the function $\phi$ in (\ref{eq:func_error_4}) is constant, and hence this term also vanishes. The remaining terms are:
\begin{align*}
(\cdots) &= C_0 \left(\int_{x_0}^{x_0 + 2i} + \int_{x_0+2i}^{x+2i} + \int_{x+2i}^{x+i\infty}\right) (z-x)^{\alpha - 1} f(z) \, dz \\
&= \frac{(2 \pi)^\alpha}{i^\alpha \Gamma(\alpha)} \int_{(x_0)} (z-x)^{\alpha - 1} f(z) \, dz.\qedhere
\end{align*}
\end{proof}

\section{Wavelet transform}\label{sec:wavelet}

Different definitions of the concept of wavelet can be found in the literature. In this article we consider the following: given $\alpha > 0$, a wavelet is a function $\psi: \mathbb{R} \rightarrow \mathbb{C}$ satisfying:
\begin{enumerate}
\item
$\psi^{(k)}(x) \ll \big(1+|x|\big)^{-\alpha-1}$ for all $k \geq 0$.
\item
$\int_{\mathbb{R}} x^k \psi(x)\, dx = 0$ for $0 \leq k < \alpha$.
\item
Either
\begin{equation*}
\int_0^\infty |\hat{\psi}(\xi)|^2 \, \frac{d\xi}{\xi} = \int_0^\infty |\hat{\psi}(-\xi)|^2\, \frac{d\xi}{\xi} = 1
\end{equation*}
or
\begin{equation*}%\label{eq:wavelet_3.2}
\hat{\psi}(\xi) = 0 \text{ if } \xi < 0 \quad \text{and} \quad \int_0^\infty |\hat{\psi}(\xi)|^2 \, \frac{d\xi}{\xi} = 1.
\end{equation*}
\end{enumerate}
These axioms are adapted from \cite[\S2]{jaffard2}. The differences with the definition employed by Jaffard are subtle but important, and will allow us to avoid the very unnatural hypothesis that appear in the main theorems of \cite{chamizo_petrykiewicz_ruiz}. We also define the wavelet transform of a bounded function $f$ with respect to the wavelet $\psi$ as
\[W(a, b) := \frac{1}{a} \int_{\mathbb{R}} f(t) \bar{\psi}\left(\frac{t-b}{a}\right)\, dt \quad (b \in \mathbb{R}, a > 0).\]

If we also ask $f$ to be periodic, with vanishing integral on each period, and satisfying $\hat{f}(\xi) = 0$ for $\xi < 0$ in the distributional sense in case the same is satisfied by $\psi$, then the following inversion formula holds:
\begin{equation}\label{eq:wavelet_inversion}
f(x) = \int_{\mathbb{R}^{+} \int_\mathbb{R}} W(a, b) \psi\left(\frac{x-b}{a}\right) \, \frac{db \, da}{a^2}.
\end{equation}
The proof of this fact can be found in \cite{holschneider_tchamitchian}. The outer integral in (\ref{eq:wavelet_inversion}) in principle has to be understood as an improper Riemann integral, but in our applications it will be absolutely convergent.

The wavelet transform allows us to reformulate questions concerning the regularity of $f$ in a point $x_0$ as questions about the growth of its wavelet transform $W$ in a neighborhood of the corresponding point $(0^+, x_0)$, as it is shown in the following two theorems:
\begin{theorem}\label{thm:wavelet_1}
Let $0 < \beta < \alpha$. If $f \in \mathcal{C}^\beta(x_0)$ then
\[W(a, b) \ll a^\beta + |b-x_0|^\beta\]
when $(a, b) \rightarrow (0^+, x_0)$.
\end{theorem}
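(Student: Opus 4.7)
The plan is to exploit the vanishing moments of $\psi$ to subtract off the local polynomial approximation of $f$ at $x_0$, and then to estimate the resulting integral by splitting the integration domain into a small neighborhood of $x_0$ and its complement. By hypothesis there is a polynomial $P$ of degree $d < \beta$ and some $\delta > 0$ such that $|f(t) - P(t-x_0)| \leq C|t-x_0|^\beta$ for $|t-x_0| \leq \delta$; by shrinking we may also assume that $(a,b)$ is so close to $(0^+,x_0)$ that $|b-x_0| \leq \delta/2$.

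Since $d < \beta < \alpha$, axiom 2 of the wavelet definition (which passes to $\bar\psi$ by complex conjugation) ensures that for the change of variable $u = (t-b)/a$, the polynomial $P(au+b-x_0)$ in $u$ has degree $d < \alpha$ and therefore integrates to zero against $\bar\psi(u)$. Consequently
\[W(a,b) \;=\; \frac{1}{a}\int_{\mathbb{R}}\bigl(f(t) - P(t-x_0)\bigr)\,\bar{\psi}\!\left(\frac{t-b}{a}\right)\,dt,\]
and it suffices to bound this modified integral.

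For the near part $|t-x_0|\leq \delta$, I would substitute $t = au+b$ and apply the Hölder bound together with the elementary inequality $|au+b-x_0|^\beta \ll a^\beta|u|^\beta + |b-x_0|^\beta$. The decay $\psi(u) \ll (1+|u|)^{-\alpha-1}$ with $\alpha > \beta$ makes $\int_{\mathbb{R}}(1+|u|)^\beta|\psi(u)|\,du$ finite, so this contribution is $O(a^\beta + |b-x_0|^\beta)$. For the far part $|t-x_0| > \delta$, I use that $f$ is bounded, that $|P(t-x_0)| \ll (1+|t-x_0|)^d$, and that under the restriction $|b-x_0|\leq\delta/2$ we have $|t-b|\geq|t-x_0|/2$, so $|\bar\psi((t-b)/a)| \ll \bigl(a/(a+|t-x_0|)\bigr)^{\alpha+1}$. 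The resulting integral is dominated by $a^\alpha \int_\delta^\infty (1+s)^{d-\alpha-1}\,ds = O(a^\alpha)$, and since $\alpha > \beta$ this is absorbed into $O(a^\beta)$ as $a \to 0^+$.

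The main obstacle is the careful interplay between the growth of $P$ at infinity and the decay of $\psi$: the strict inequality $\beta < \alpha$ is exactly what is needed to simultaneously annihilate $P$ via the vanishing moments and to make both the factor $\int(1+|u|)^\beta|\psi(u)|\,du$ and the tail integral $\int_\delta^\infty (1+s)^{d-\alpha-1}\,ds$ converge. The only minor subtlety is that $\mathcal{C}^\beta(x_0)$ is a purely local space, which is why one must treat the near and far regions separately and cannot simply estimate the integrand globally by $|t-x_0|^\beta$.
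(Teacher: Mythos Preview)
Your proof is correct and follows the same strategy as the paper's: subtract $P$ using the vanishing moments of $\psi$, then estimate via the decay bound $|\psi(u)|\ll(1+|u|)^{-\alpha-1}$. One simplification worth noting, contrary to your closing remark: the estimate $|f(t)-P(t-x_0)|\ll|t-x_0|^\beta$ \emph{does} extend globally, because $f$ is bounded and $\deg P<\beta$ (so for $|t-x_0|\geq\delta$ one has $|f(t)-P(t-x_0)|\ll 1+|t-x_0|^{\deg P}\ll|t-x_0|^\beta$); the paper exploits this to bypass the near/far split altogether and estimate the whole integral in one stroke.
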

\begin{theorem}\label{thm:wavelet_2}
Let $0 < \beta' < \beta < \alpha$. If
\[W(a, b) \ll a^\beta + a^{\beta - \beta'} |b-x_0|^{\beta'}\]
when $(a, b) \rightarrow (0^+, x_0)$ then $f \in \mathcal{C}^\beta(x_0)$ if $\beta$ is not an integer and $f \in \mathcal{C}^\beta_{\log}(x_0)$ otherwise.
\end{theorem}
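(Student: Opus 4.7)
The plan is to invert the wavelet transform via (\ref{eq:wavelet_inversion}), subtract an appropriate polynomial in $h := x - x_0$, and estimate the remainder. The global bound $|W(a,b)| \ll 1$, following from the boundedness of $f$, will control everything away from $(0^+, x_0)$, while the hypothesis on $W$ is used only in a small neighborhood of that point.

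Let $n$ be the unique nonnegative integer with $n < \beta \leq n+1$, so $n = [\beta]$ when $\beta \notin \mathbb{Z}$ and $n = \beta - 1$ when $\beta \in \mathbb{Z}$. Define
$$P(h) := \sum_{k=0}^{n} \frac{h^k}{k!}\, c_k, \qquad c_k := \int_0^\infty \!\!\int_{\mathbb{R}} W(a,b)\, a^{-k}\, \psi^{(k)}\!\left(\frac{x_0-b}{a}\right) \frac{db\, da}{a^2}.$$
Each $c_k$ converges absolutely: the decay axiom on $\psi$ combined with $|W|\ll 1$ handles the regions $a \geq 1$ and $|b-x_0|\geq 1$, while in a neighborhood of $(0^+,x_0)$ the localized hypothesis (together with $\beta' < \alpha$) gives integrability. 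Since $\deg P \leq n < \beta$, this polynomial is admissible in the definition of $\mathcal{C}^\beta(x_0)$. Substituting into (\ref{eq:wavelet_inversion}) one obtains
$$f(x) - P(h) = \int_0^\infty \!\!\int_{\mathbb{R}} W(a,b)\, R(a,b,h) \frac{db\, da}{a^2},$$
where $R(a,b,h) := \psi((x-b)/a) - \sum_{k=0}^{n} (h/a)^k \psi^{(k)}((x_0-b)/a)/k!$ is the Taylor remainder of order $n+1$ of $x \mapsto \psi((x-b)/a)$ expanded around $x_0$.

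The remainder satisfies $|R| \ll |h/a|^{n+1}(1+|(b-x_0)/a|)^{-\alpha-1}$ for $a \geq |h|$, and $|R| \ll \sum_{k \leq n+1}|h/a|^k(1+|(b-x_0)/a|)^{-\alpha-1}$ for $a \leq |h|$, both coming directly from axiom~1 on $\psi$. I would then split the integration domain by the cuts $a = 1$, $|b-x_0| = 1$, $a = |h|$. In the regions $a \geq 1$ or $|b-x_0| \geq 1$ the wavelet's decay plus $|W|\ll 1$ trivially contribute $O(|h|^{n+1}) = O(|h|^\beta)$. In $a \leq |h|$, $|b-x_0| < 1$, the hypothesis on $W$ plus integration in $b$ (using $\beta' < \alpha$) gives an integrand $\ll a^{\beta-1}$, contributing $\ll |h|^\beta$. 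The decisive region is $|h| \leq a < 1$, $|b-x_0| < 1$, where the Taylor bound and the hypothesis on $W$ yield an integrand $\ll |h|^{n+1}\, a^{\beta-n-2}$, and
$$\int_{|h|}^{1} a^{\beta-n-2}\, da \ll \begin{cases} |h|^{\beta-n-1} & \beta \notin \mathbb{Z},\\ \log(1/|h|) & \beta \in \mathbb{Z}, \end{cases}$$
producing $|h|^\beta$ or $|h|^\beta\log(1/|h|)$ respectively.

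The main obstacle is the careful bookkeeping between the localized hypothesis and the trivial global bound on $W$, and verifying that the polynomial $P$ is genuinely well-defined (absolute convergence of the $c_k$). The logarithmic factor in the integer case is the classical signature of the logarithmic divergence of $\int a^{-1}\,da$, and it appears precisely at the integer boundary exponent in the decisive region.
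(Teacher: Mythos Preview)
Your approach is essentially the paper's: invert, subtract the Taylor polynomial, and split the $a$-integral at $|h|$. The paper packages this slightly differently by first integrating over $b$ to form $\omega(a,x)$, then Taylor-expanding $\omega$, but the underlying estimates are identical.

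There is, however, a genuine gap in your justification of the absolute convergence of $c_0$. With only $|W(a,b)|\ll 1$, the region $a\geq 1$ contributes
\[
\int_{1}^{\infty}\!\!\int_{\mathbb{R}}\Big|\psi\Big(\tfrac{x_{0}-b}{a}\Big)\Big|\,\frac{db\,da}{a^{2}}
\;\asymp\;\int_{1}^{\infty}\frac{a}{a^{2}}\,da
\;=\;\int_{1}^{\infty}\frac{da}{a}
\;=\;\infty,
\]
so $c_0$ is \emph{not} absolutely convergent from boundedness of $f$ alone (for $k\geq 1$ the extra $a^{-k}$ saves you, and your remainder integral is also fine because of the extra $(h/a)^{n+1}$). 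What is actually needed, and what the paper uses, is the sharper decay $W(a,b)\ll a^{-1}$ for large $a$, obtained by integrating by parts in the definition of $W$ and using that $f$ is periodic with zero mean. With this bound the large-$a$ contribution to $c_0$ becomes $\int_{1}^{\infty}a^{-2}\,da<\infty$, and your subtraction $f(x)-P(h)=\iint W\,R$ is then legitimate. Once you insert this single missing ingredient, the rest of your argument goes through exactly as written.
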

The bounds involving $W(a, b)$ in these two theorems may also be written in the forms $a^\beta\left(1+\frac{|b-x_0|}{a}\right)^\beta$ and $a^\beta \left(1+\frac{|b-x_0|}{a}\right)^{\beta'}$, respectively, from where it is clear that the second one constitutes a strengthening of the first.

\begin{remark}
The last two theorems are analogous to proposition~1 of \cite{jaffard2} for our definition of wavelet. With our notation, the use of the definition given in \cite{jaffard2} would require the extra hypothesis $[\beta] \leq [\alpha] - 1$. Note also that the logarithm appearing when $\beta \in \mathbb{Z}$ is neglected in \cite{jaffard2} (and the proof for $\beta \geq 1$ left to the reader). Indeed, theorem~\ref{thm:func_eq} shows that there are examples for which the logarithm is necessary (\emph{cf.} \S\ref{sec:proofs}).
\end{remark}

\begin{proof}[Proof of theorem~\ref{thm:wavelet_1}]
We can assume without loss of generality $x_0 = 0$. By hypothesis there is a polynomial $P$ of degree strictly smaller than $\alpha$ such that
\[|f(x) - P(x)| \ll |x|^\beta,\]
estimate which we may assume to hold globally. Hence, by the property~2 of analytic wavelets,
\begin{align*}
W(a, b) &\ll \frac{1}{a} \int_{\mathbb{R}} |f(t) - P(t)| \left| \psi\left(\frac{t-b}{a}\right)\right| \, dt \\
&\ll \frac{1}{a} \int_{\mathbb{R}} \frac{|t|^\beta}{\left(\left|\frac{t-b}{a}\right| + 1\right)^{\alpha + 1}} \, dt \\
&\ll a^\beta \int_{\mathbb{R}}\frac{|t|^\beta}{\big(|t| + 1\big)^{\alpha + 1}}\, dt + |b|^\beta \int_{\mathbb{R}} \frac{dt}{\big(|t| + 1\big)^{\alpha + 1}}\\
&\ll a^\beta + |b|^\beta.\qedhere
\end{align*}\end{proof}

In order to prove theorem~\ref{thm:wavelet_2} we shall use the inversion formula (\ref{eq:wavelet_inversion}), which for convenience will be written in the following way:
\begin{equation}\label{eq:wavelet_inversion_2}
f(x) = \int_{\mathbb{R}^+} \omega(a, x) \, \frac{da}{a}
\end{equation}
where
\begin{equation}\label{eq:wavelet_omega}
\omega(a, x) = \frac{1}{a} \int_{\mathbb{R}} W(a, b) \psi\left(\frac{x-b}{a}\right)\, db.
\end{equation}
We prove first some estimates for $\omega$. In particular they show that the integral in (\ref{eq:wavelet_inversion_2}) is absolutely convergent.
\begin{lemma}\label{lem:omega_bounds}
Under the hypothesis of theorem~\ref{thm:wavelet_2} the function $x \mapsto \omega(a, x)$ is infinitely many times differentiable and satisfies for all $k \geq 0$ and for some $\delta > 0$:
\begin{align}
\frac{\partial^k \omega}{\partial x^k}(a,x) &\ll a^{-k-1}, \label{eq:wavelet_omega_bound_1}\\
\frac{\partial^k \omega}{\partial x^k}(a,x) &\ll a^{\beta - k} + a^{\beta - \beta' - k} |x-x_0|^{\beta'} \qquad (a \leq 1, |x-x_0| \leq \delta) \label{eq:wavelet_omega_bound_2}
\end{align}
\end{lemma}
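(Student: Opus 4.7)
The plan is to differentiate under the integral sign in \eqref{eq:wavelet_omega} and then split the resulting integral to get the two bounds. After the substitution $u=(x-b)/a$ and differentiating $k$ times in $x$, we obtain
\[\frac{\partial^k \omega}{\partial x^k}(a,x) = \frac{1}{a^k}\int_\mathbb{R} W(a, x-au)\, \psi^{(k)}(u)\, du,\]
with the exchange of differentiation and integration justified by the decay $\psi^{(k)}(u)\ll(1+|u|)^{-\alpha-1}$ coming from property~1 of wavelets; this simultaneously establishes that $x\mapsto\omega(a,x)$ is $C^{\infty}$.

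The global bound \eqref{eq:wavelet_omega_bound_1} is then immediate. In the application $f=f_\alpha$ is continuous and periodic, hence bounded, so $W(a,b)=a^{-1}\int f\bar\psi\bigl(\tfrac{\,\cdot\,-b}{a}\bigr)$ is uniformly bounded by a constant multiple of $\|f\|_\infty\|\psi\|_1$. Combined with $\int|\psi^{(k)}|<\infty$ this yields $|\partial_x^k\omega(a,x)|\ll a^{-k}$, which is already strictly stronger than the stated $a^{-k-1}$.

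For the refined estimate \eqref{eq:wavelet_omega_bound_2} the hypothesis on $W$ must be exploited locally. Fix $\delta_0>0$ so that $W(a,b)\ll a^\beta+a^{\beta-\beta'}|b-x_0|^{\beta'}$ whenever $a\le\delta_0$ and $|b-x_0|\le\delta_0$. Since for $a$ bounded away from $0$ the global bound already subsumes \eqref{eq:wavelet_omega_bound_2}, one may restrict attention to $a\le a_0$ for a small $a_0$, and pick $\delta$ accordingly. I would split the integration at $|u|=\delta_0/(2a)$. On the inner piece $\{|u|\le\delta_0/(2a)\}$ one has $|x-au-x_0|\le\delta_0$, and the elementary inequality $|x-au-x_0|^{\beta'}\ll|x-x_0|^{\beta'}+a^{\beta'}|u|^{\beta'}$ produces
\[|W(a,x-au)|\ll a^\beta+a^{\beta-\beta'}|x-x_0|^{\beta'}+a^\beta|u|^{\beta'};\]
multiplying by $|\psi^{(k)}(u)|$ and integrating gives exactly the claimed bound, since $\int|u|^{\beta'}|\psi^{(k)}(u)|\,du<\infty$ (finite because $\beta'<\alpha$). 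On the tail $\{|u|>\delta_0/(2a)\}$ the uniform bound on $W$ together with $\int_{|u|>M}|\psi^{(k)}(u)|\,du\ll M^{-\alpha}$ contributes a term of order $a^\alpha$, which is absorbed into $a^\beta$ since $\beta<\alpha$.

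No step is genuinely deep; the only mild obstacle is keeping straight the two regimes for $W$ — the polynomial-type hypothesis that is only local near $(0^+,x_0)$ versus the global uniform boundedness — and checking that the tail decay of $\psi^{(k)}$ suffices to close the estimate, which works precisely because $\beta'<\beta<\alpha$.
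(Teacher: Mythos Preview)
Your treatment of \eqref{eq:wavelet_omega_bound_2} is correct and essentially the paper's argument after the change of variable $u=(x-b)/a$: split the integral, use the local hypothesis on $W$ in the inner piece, and use the uniform boundedness of $W$ together with the tail decay of $\psi^{(k)}$ on the outer piece.

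The problem is with \eqref{eq:wavelet_omega_bound_1}. Your claim that $a^{-k}$ is ``strictly stronger'' than the stated $a^{-k-1}$ is only true for $a\le 1$; for $a>1$ it is weaker. And it is precisely the large-$a$ regime that matters here: the lemma is invoked to show that the inversion integral $\int_0^\infty \omega(a,x)\,da/a$ is absolutely convergent (see the sentence immediately preceding the lemma), and with only $\omega(a,x)\ll 1$ at $k=0$ you get $\omega(a,x)/a\ll a^{-1}$, which is \emph{not} integrable at infinity. The same issue recurs in the proof of theorem~\ref{thm:wavelet_2} when $\beta$ is an integer, where \eqref{eq:wavelet_omega_bound_1} is applied on $a\ge 1$.

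The missing ingredient is one more integration by parts. Since $f$ is periodic with vanishing integral over each period (this is part of the standing hypotheses for the inversion formula), it has a bounded antiderivative $F$, and
\[
W(a,b)=\frac{1}{a}\int_{\mathbb{R}} f(t)\,\bar\psi\!\left(\frac{t-b}{a}\right)dt
=-\frac{1}{a}\int_{\mathbb{R}} F(au+b)\,\bar\psi'(u)\,du
\ll a^{-1}.
\]
Plugging $W(a,b)\ll a^{-1}$ into your formula for $\partial_x^k\omega$ gives the required $a^{-k-1}$ valid for all $a>0$. This is exactly what the paper does.
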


\begin{proof}
It is clear that $W(a, b)$ is uniformly bounded and $\psi$ and all its derivatives have decay (property 1 of analytic wavelets). Therefore we may differentiate (\ref{eq:wavelet_omega}) under the integral sign obtaining
\begin{equation}\label{eq:wavelet_omega_2}
\frac{\partial^k \omega}{\partial x^k} (a, x) = \frac{1}{a^{k+1}} \int_{\mathbb{R}} W(a, b) \psi^{(k)}\left( \frac{x-b}{a} \right) \, db.
\end{equation}

Integrating by parts in the definition of $W(a, b)$ and using that the integral over each period of $f$ vanishes it is readily seen that $W(a, b) \ll a^{-1}$. Plugging this into (\ref{eq:wavelet_omega_2}) one obtains (\ref{eq:wavelet_omega_bound_1}).

To prove (\ref{eq:wavelet_omega_bound_2}) we first assume without loss of generality that $x_0 = 0$, and that the bounds in the statement of theorem~\ref{thm:wavelet_2} hold uniformly in the neighborhood $a \leq 1$ and $|b| \leq 2\delta$. We have for $a \leq 1$ and $|x| \leq \delta$:
\begin{align*}
\frac{\partial^k \omega}{\partial x^k} (a, x) &\ll \frac{1}{a^{k+1}} \int_{|b| \leq 2\delta}  \frac{a^\beta + a^{\beta - \beta'}|b|^{\beta'}}{\left(\left|\frac{x-b}{a}\right|+1\right)^{\alpha + 1 }} \, db + \frac{1}{a^{k+1}} \int_{|b| > 2\delta} \frac{db}{\left(\left|\frac{x-b}{a}\right|+1\right)^{\alpha + 1 }} \\
&\ll a^{\beta - k} + a^{\beta - \beta' - k} \int_{\mathbb{R}} \frac{|x-at|^{\beta'}}{\big(|t| + 1\big)^{\alpha + 1}} \, dt + \frac{1}{a^k} \int_{t > \delta/a} \frac{dt}{(t+1)^{\alpha + 1}} \\
&\ll a^{\beta - k} + a^{\beta - \beta' - k}|x|^{\beta'}.\qedhere
\end{align*}
\end{proof}

\begin{proof}[Proof of theorem~\ref{thm:wavelet_2}]
Again we can assume $x_0 = 0$. Let $N = [\beta]$ if $\beta$ is not an integer and $N = \beta - 1$ otherwise. We perform a Taylor expansion of order $N$ on $\omega$:
\[\omega(a, x) = \sum_{k = 0}^N \frac{\partial^k \omega}{\partial x^k} (a, 0) \frac{x^k}{k!} + E(a, x).\]
Using the bounds of lemma~\ref{lem:omega_bounds} we can plug this into (\ref{eq:wavelet_inversion_2}) to obtain
\[f(x) = P(x) + \int_{\mathbb{R}^+} E(a, x) \, \frac{da}{a}\]
for certain polynomial $P$ of degree at most $[\beta]$. It suffices to prove that the integral term has the right behavior when $x \rightarrow 0$.

We split the integral. In the range $a \leq |x|$ we use (\ref{eq:wavelet_omega_bound_2}) with either $x = 0$ or $k = 0$ to obtain
\[\left|\int_{a \leq |x|} E(a, x)\, \frac{da}{a}\right| \leq \int_{a \leq |x|} |\omega(a, x)| \, \frac{da}{a} + \sum_{k = 0}^N \frac{|x|^k}{k!} \int_{a \leq |x|} \left| \frac{\partial^k \omega}{\partial x^k}(a, 0)\right| \frac{da}{a} \ll |x|^\beta.\]
In the complementary range, assuming that $\beta$ is not an integer, we use the formula for the Taylor error term together with (\ref{eq:wavelet_omega_bound_2}):
\[\left|\int_{a \geq |x|} E(a, x) \, \frac{da}{a}\right| \leq \frac{|x|^{N+1}}{(N+1)!} \int_{a \geq |x|} \left| \frac{\partial^{N+1} \omega}{\partial x^{N+1}}(a, \xi_{a, x})\right| \frac{da}{a} \ll |x|^\beta.\]
When $\beta$ is an integer the same argument works using (\ref{eq:wavelet_omega_bound_2}) in the range $|x| \leq a \leq 1$ and (\ref{eq:wavelet_omega_bound_1}) in the range $a \geq 1$. The right hand side has to be replaced by $|x|^\beta \log|x|$.
\end{proof}

Following \cite{chamizo_petrykiewicz_ruiz, jaffard2} we apply these theorems to $f_\alpha$, where $f$ is a modular form, with $\psi(x) = (x+i)^{-\alpha - 1}$. The reader can easily verify that $\psi$ satisfies properties 1 and 2 of our definition of wavelet. In order to check property 3 we compute $\hat{\psi}$. The integral
\[\hat{\psi}(\xi) = \int_{\mathbb{R}} \frac{e^{-2\pi i \xi x}}{(x+i)^{\alpha + 1}} \, dx\]
vanishes for $\xi \leq 0$ by Cauchy's theorem. For $\xi > 0$ we perform a change of variables obtaining
\[\hat{\psi}(\xi) = \xi^\alpha e^{-2\pi \xi} \int_{\mathbb{R} + \xi i} \frac{e^{-2\pi i z}}{z^{\alpha + 1}} \, dz\]
and by Cauchy's theorem the integral on the right hand side is a constant with respect to $\xi$. The exact value of the constant is not important, since $\psi$ needs not to be normalized for theorems~\ref{thm:wavelet_1} and \ref{thm:wavelet_2} to hold, although it can be explicitly computed by means of Hankel's contour integral for the reciprocal of the gamma function (\emph{cf.} \cite[\S12.22]{whittaker_watson}).

It is also clear that $f_\alpha$ is a periodic function, with vanishing integral on each period, and whose Fourier transform (in the distributional sense) is supported only in the positive frequencies. To compute its wavelet transform with respect to $\psi$ it suffices to compute the one for $g(x) = e^{2 \pi i \lambda x}$. This can be done using some basic properties of the Fourier transform:
\begin{equation}\label{eq:wavelet_g_transform}
W_g(a, b) = e^{2\pi i \lambda b} \bar{\hat{\psi}}(\lambda a) = \begin{cases}
C a^{\alpha} \lambda^\alpha e^{2 \pi i \lambda (b+ai)} & \lambda > 0 \\
0 & \lambda \leq 0.
\end{cases}\end{equation}
Hence
\begin{equation}\label{eq:wavelet_f_transform}
W_{f_\alpha}(a, b) = C a^\alpha \big(f(b+ai) - f(\infty)\big).
\end{equation}

\begin{corollary}\label{cor:wavelet}
If for some $0 < \beta < \alpha$ one has $f_\alpha \in \mathcal{C}^\beta(x_0)$ then
\[f(b+ai) \ll a^{\beta - \alpha} + a^{-\alpha} |b-x_0|^\beta\]
when $(a, b) \rightarrow (0^+, x_0)$. Reciprocally, if for some $0 < \beta' < \beta < \alpha$ one has
\[f(b+ai) \ll a^{\beta - \alpha} + a^{\beta - \beta' - \alpha}|b-x_0|^{\beta'}\]
when $(a, b) \rightarrow (0^+, x_0)$, then $f_\alpha \in \mathcal{C}^\beta(x_0)$ if $\beta$ is not an integer and $f_\alpha \in \mathcal{C}^\beta_{\log}(x_0)$ otherwise. Moreover both statements remain true if one replaces $f_\alpha$ by its real or imaginary parts.
\end{corollary}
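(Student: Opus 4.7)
The plan is to apply theorems~\ref{thm:wavelet_1} and~\ref{thm:wavelet_2} directly, using the explicit identity $W_{f_\alpha}(a,b) = C a^{\alpha}(f(b+ai) - f(\infty))$ from~(\ref{eq:wavelet_f_transform}) to dictionary-translate regularity of $f_\alpha$ at $x_0$ into growth of $f$ near the real axis. The wavelet $\psi(x) = (x+i)^{-\alpha-1}$ has already been checked to satisfy axioms 1--3 (in the analytic variant), and $f_\alpha$ is periodic with vanishing integral and Fourier support in $(0, +\infty)$, so both theorems apply to it.

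For the direct implication, I would feed $f_\alpha \in \mathcal{C}^\beta(x_0)$ into theorem~\ref{thm:wavelet_1} to obtain $W_{f_\alpha}(a,b) \ll a^\beta + |b-x_0|^\beta$, then divide by $C a^\alpha$ and observe that the constant $f(\infty)$ is absorbed into $a^{\beta - \alpha}$ since $\beta < \alpha$ forces $1 \ll a^{\beta-\alpha}$ as $a \to 0^+$. For the reciprocal implication, I would multiply the given bound on $f(b+ai)$ by $C a^\alpha$ and note that the contribution $C a^\alpha |f(\infty)|$ is dominated by $a^\beta$, again because $\beta < \alpha$; then theorem~\ref{thm:wavelet_2} (which uses the inversion formula, valid here thanks to the one-sided Fourier support of $f_\alpha$ matching that of $\psi$) delivers $f_\alpha \in \mathcal{C}^\beta(x_0)$ for non-integer $\beta$ and $f_\alpha \in \mathcal{C}^\beta_{\log}(x_0)$ otherwise.

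The only point requiring a little more care is the statement about $\Re f_\alpha$ and $\Im f_\alpha$, since these functions have Fourier mass on both half-lines and so are not compatible with an analytic wavelet via the inversion formula. In the reciprocal direction there is nothing to prove: the classes $\mathcal{C}^\beta(x_0)$ and $\mathcal{C}^\beta_{\log}(x_0)$ are closed under complex conjugation, hence under taking real and imaginary parts, so the conclusion for $f_\alpha$ transfers at once. In the direct direction, the key observation is that, because $\widehat{\overline{f_\alpha}}$ is supported in $(-\infty, 0)$ while $\hat\psi$ is supported in $(0, +\infty)$, a Plancherel computation yields $W_{\overline{f_\alpha}}(a,b) = 0$; consequently $W_{\Re f_\alpha} = \tfrac{1}{2} W_{f_\alpha}$ and $W_{\Im f_\alpha} = -\tfrac{i}{2} W_{f_\alpha}$, so the very same bound from theorem~\ref{thm:wavelet_1} applied to $\Re f_\alpha$ or $\Im f_\alpha$ produces the same estimate on $f(b+ai)$ up to a constant. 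This slight Fourier-support bookkeeping is the only non-cosmetic step of the argument.
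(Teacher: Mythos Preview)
Your proposal is correct and matches the paper's proof essentially line for line: apply theorems~\ref{thm:wavelet_1} and~\ref{thm:wavelet_2} via the identity~(\ref{eq:wavelet_f_transform}), transfer the reciprocal conclusion to real and imaginary parts by closure of $\mathcal{C}^\beta(x_0)$ under conjugation, and handle the direct implication for $\Re f_\alpha$, $\Im f_\alpha$ by observing that the analytic wavelet annihilates the negative-frequency piece so that $W_{\Re f_\alpha}=\tfrac{1}{2}W_{f_\alpha}$ and $W_{\Im f_\alpha}=\tfrac{1}{2i}W_{f_\alpha}$. The only cosmetic difference is that the paper obtains this last identity by expanding sines and cosines in exponentials and invoking~(\ref{eq:wavelet_g_transform}) rather than phrasing it as a Plancherel/support argument, and it explicitly remarks that theorem~\ref{thm:wavelet_1} does not rely on the inversion formula---a point you also implicitly use.
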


\begin{proof}
The part of the theorem concerning $f_\alpha$ follows at once from theorems~\ref{thm:wavelet_1} and \ref{thm:wavelet_2} and (\ref{eq:wavelet_f_transform}). Also note that if $f_\alpha \in \mathcal{C}^\beta(x_0)$ or $f_\alpha \in \mathcal{C}^\beta_{\log}(x_0)$ then the same must hold for the real and the imaginary parts of $f_\alpha$.

On the other hand, $\Re f_\alpha$ and $\Im f_\alpha$ are bounded functions, and hence their wavelet transforms are well defined. By rewriting the sine and cosine functions involved in their Fourier series as sums of exponentials and applying (\ref{eq:wavelet_g_transform}) one obtains
\[W_{f_\alpha}(a, b) = 2 W_{\Re f_\alpha}(a, b) = 2i W_{\Im f_\alpha}(a, b).\]
Since the inversion formula (\ref{eq:wavelet_inversion}) is not used in the proof of theorem~\ref{thm:wavelet_1}, we may apply this theorem to $\Re f_\alpha$ and $\Im f_\alpha$.
\end{proof}

\section{Regularity theorems}\label{sec:proofs}

This section contains the proofs of theorems~\ref{thm:global}, \ref{thm:local_rat} and \ref{thm:local_irrat}.

\begin{lemma}\label{lem:beta_star}
If $f_\alpha$ is in $\mathcal{C}^{k, 0}(x)$ for some $k < \alpha - \alpha_0$, but cannot be continuously differentiated $k+1$ times in any open interval containing $x$, then
\[\beta^{*}(x) = k + \min \{1, \beta_{\alpha - k}(x)\},\]
where $\beta_{\alpha-k}$ denotes the pointwise Hölder exponent of $f_{\alpha - k}$. This formula extends to $\Re f_\alpha$ and $\Im f_\alpha$ if both these functions satisfy the hypothesis and their pointwise Hölder exponents coincide.
\end{lemma}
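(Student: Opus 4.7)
The plan is to reduce computing $\beta^*(x)$ to computing the pointwise Hölder exponent of $f_{\alpha-k}$ at $x$, by identifying $f_\alpha^{(k)}$ with a nonzero multiple of $f_{\alpha-k}$ on the interval where the classical $k$-th derivative exists. The first step is to prove the identity $f_\alpha^{(k)}(y) = (2\pi i)^k f_{\alpha - k}(y)$ on the open interval $I \ni x$ produced by the hypothesis $f_\alpha \in \mathcal{C}^{k,0}(x)$. Since $f_\alpha^{(k)} \in \Lambda^0(x)$ forces $f_\alpha^{(k)}$ to be continuous on $I$, successive integration shows $f_\alpha \in C^k(I)$, so its classical $k$-th derivative coincides with the distributional $k$-th derivative on $I$. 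Termwise differentiation of the Fourier series (\ref{eq:fractal_integ}), under the standing assumption $m_\infty = 1$, $\kappa_\infty = 0$, gives the distributional identity $f_\alpha^{(k)} = (2\pi i)^k f_{\alpha - k}$ on all of $\mathbb{R}$; the right-hand side is a continuous function by Theorem~\ref{thm:global}, since $k < \alpha - \alpha_0$.

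Next I would read off $\beta^*(x)$ from this identification. The nonexistence of a continuous $(k+1)$-st derivative on any open interval containing $x$ rules out $f_\alpha \in \mathcal{C}^{j, s}(x)$ for $j \geq k+1$, and for $j < k$ the quantity $j + s \leq j + 1 \leq k$ is dominated by what one gets from $j = k$. Hence $\beta^*(x) = k + \sup\{s \in [0, 1] : f_\alpha^{(k)} \in \Lambda^s(x)\}$, and by the first step the supremum equals $\sup\{s \in [0, 1] : f_{\alpha - k} \in \Lambda^s(x)\}$. This last supremum is $\min\{1, \beta_{\alpha-k}(x)\}$: when $\beta_{\alpha-k}(x) \leq 1$ the equality is immediate from $\Lambda^s(x) = \mathcal{C}^s(x)$ for $0 \leq s \leq 1$, while when $\beta_{\alpha-k}(x) > 1$ the polynomial approximation implicit in $\mathcal{C}^s(x)$ for some $s > 1$ forces the constant term to be $f_{\alpha-k}(x)$ and the remainder to be $O(|y-x|)$, giving $f_{\alpha-k} \in \Lambda^1(x)$.

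For $\Re f_\alpha$ and $\Im f_\alpha$ the same termwise differentiation produces, up to a nonzero real scalar, $(\Re f_\alpha)^{(k)}$ equal to $\pm \Re f_{\alpha-k}$ or $\pm \Im f_{\alpha-k}$ depending on $k \bmod 4$, and analogously for $\Im f_\alpha$. Under the hypothesis that the pointwise Hölder exponents of these two real and imaginary parts at $x$ agree, the previous argument carries over verbatim. The main delicate point is the first step, namely upgrading the pointwise existence of $f_\alpha^{(k)}$ on $I$ to its identification with the distributional $k$-th derivative; once that is in place, the rest is essentially an unpacking of the definitions of the various Hölder classes.
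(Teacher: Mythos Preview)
Your proof is correct and takes essentially the same approach as the paper's one-line argument, which simply invokes the identity $f_\alpha^{(k)} = (2\pi i)^k f_{\alpha-k}$ and the definition of $\beta^*$; you have just spelled out the details that the paper leaves implicit. One minor simplification: the distributional detour in your first step is unnecessary, since for $k < \alpha - \alpha_0$ the series for each $f_{\alpha-j}$ with $0 \leq j \leq k$ converges uniformly by Theorem~\ref{thm:global}, and hence termwise differentiation yields the identity $f_\alpha^{(k)} = (2\pi i)^k f_{\alpha-k}$ classically and globally, independently of the hypothesis $f_\alpha \in \mathcal{C}^{k,0}(x)$.
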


\begin{proof}
This follows at once from the identity $f_\alpha^{(k)} = (2 \pi i)^k f_{\alpha - k}$ and the definition of $\beta^{*}$.
\end{proof}

%\begin{remark}
In order to prove theorems~\ref{thm:global} and \ref{thm:local_rat} we anticipate two very simple results which will come in handy. Applying corollary~\ref{cor:wavelet} with the bounds from lemma~\ref{lem:simple_bounds} we obtain $\beta(x) = \alpha - r/2$ for $f$ cuspidal and $x$ irrational and $\beta(x) = \alpha - r$ for $f$ not cuspidal and $x$ any non-cuspidal rational.
%\end{remark}

\begin{proof}[Proof of theorem~\ref{thm:global}]
1) (Proposition~3.1 of \cite{chamizo}) If the series defining $f_\alpha$ converge at a certain point for $\alpha < \alpha_0$ then summing by parts the series defining $f_{\alpha_0}$ must also converge at that point, and therefore we may reduce to this case.

Suppose first that $f$ is cuspidal, we will prove that $f_{r/2}$ diverges at any irrational point $x$. Considering the kernels of summability $\varphi_1(u) = e^{-2\pi u}(u^{r/2}+1)$ and $\varphi_2(u) = e^{-2\pi u}$, we have (see Th. III.1.2 of \cite{zygmund}):
\[\lim_{y \rightarrow 0^+} y^{r/2} f(x+iy) = \lim_{t \rightarrow 0^+} \left(\sum_{n > 0} A_n \varphi_1(ny) - \sum_{n > 0} A_n \varphi_2(ny)\right) = 0\]
with $A_n = \frac{a_n}{n^{r/2}} e^{2\pi i n x}$, as long as $f_{r/2}$ converges at $x$; but this contradicts lemma~\ref{lem:simple_bounds}.

Suppose now that $f$ is not cuspidal. We prove that $f_r$ is not Abel summable at any non-cuspidal rational point $x$. If this were not the case then by lemma~\ref{lem:integ_rep} we would have for some $\ell \in \mathbb{C}$,
\[\ell = \lim_{y \rightarrow 0^+} f_r(x+iy) = \lim_{y \rightarrow 0^+} \frac{(2\pi)^\alpha}{\Gamma(\alpha)} \int_y^{\infty} (t-y)^{r-1} \big(f(x + it) - f(\infty)\big)\, dt.\]
But since by the expansion at the cusp the term $f(x+it)$ behaves like $Ct^{-r}$ for small $t$, the right hand side diverges.

2) The result follows from applying lemma~\ref{lem:diff} to the integral representation given by lemma~\ref{lem:integ_rep} repeatedly.

3) Suppose first that $f$ is not cuspidal. If $\alpha - r < 1$ then neither $f_\alpha$ nor its real or imaginary parts are differentiable at any non-cuspidal rational, since they are at most $(\alpha - r)$-Hölder at these points. Only the limit case $\alpha = r + 1$ remains. But in this case we may appeal to theorem~\ref{thm:func_eq}, since $2\alpha - r = r + 2 > 1$ implies that both the second term and the error term are differentiable at the rational $x_0$, and the first term is not if $x_0$ is non-cuspidal. A more detailed analysis shows that neither the real nor the imaginary parts of the function $Cx\log{x}$ are differentiable at $0$ for any complex constant $C$.

(Lem. 3.7 of \cite{chamizo_petrykiewicz_ruiz}) Suppose now that $f$ is cuspidal. If $f_\alpha$ is in $\mathcal{C}^{1, 0}(I)$ then by theorem~\ref{thm:func_eq} it is also in $\mathcal{C}^{1, 0}(\gamma(I))$ for any $\gamma \in \Gamma$. It follows that $f_\alpha'$ must exist and be continuous everywhere, and by Bessel's inequality
\[\|f_\alpha'\|_2^2 \gg \sum_{n > 0} \frac{|a_n|^2}{n^{2\alpha-2}}.\]
But the right hand side diverges for $\alpha - r/2 \leq 1$ as can be checked by summing by parts and using the estimates of lemma~\ref{lem:coef_growth}.

Finally assume that either $\Re f_\alpha$ or $\Im f_\alpha$ is in $\mathcal{C}^{1, 0}(I)$. Since the periodic Hilbert transform preserves the Sobolev space $H^1$ (\emph{cf.} \S3 of \cite{iorio_iorio}) and sends a Fourier series to its conjugate series (and therefore $\Re f_\alpha$ to $\Im f_\alpha$ and $\Im f_\alpha$ to $-\Re f_\alpha$, \emph{cf.} \S II.5 of \cite{zygmund}), the function $f_\alpha$ must, at least, have a weak derivative in $L^2(I')$ for some smaller interval $I'$. This is enough to carry on the previous argument.
\end{proof}

\begin{proof}[Proof of theorem~\ref{thm:local_rat}]
Let $x_0$ be a rational number. 

1) If $f$ is not cuspidal at $x_0$ then we already know $\beta(x_0) = \alpha - r$. Hence may assume that $f$ is cuspidal at $x_0$. Choose a scaling matrix $\sigma$ satisfying $\sigma(\infty) = x_0$ and apply theorem~\ref{thm:func_eq}. We deduce that $f_\alpha \in \mathcal{C}^{2\alpha - r}(x_0)$ and that $f_\alpha \notin \mathcal{C}^{2\alpha - r + \varepsilon}(x_0)$ for any $\varepsilon > 0$, since the term $\sigma^{-1} x$ diverges to $\infty$ when $x \rightarrow x_0$ and $f_\alpha^\sigma$ is a nonzero periodic function. Hence $\beta(x_0) = 2\alpha - r$. The same must be true for $\Re f_\alpha$ and $\Im f_\alpha$ as long as the image $f_\alpha^\sigma(\mathbb{R})$ is not contained in any one-dimensional subspace of $\mathbb{C}$. This is indeed the case as $f_\alpha^\sigma$ corresponds to a Fourier series with only positive frequencies.

2) The exponent $\beta^{*}$ is determined by applying lemma~\ref{lem:beta_star} with $k = [\alpha - \alpha_0]$ if $\alpha - \alpha_0 \notin \mathbb{Z}$ and $k = \alpha - \alpha_0 - 1$ otherwise (\emph{cf.} theorem~\ref{thm:global}).

3) To determine $\beta^{**}$ note first that theorem~\ref{thm:global} implies $\beta^{**}(x) \geq \alpha - \alpha_0$. Since this exponent also satisfies $\beta^{**}(x) \leq \liminf_{t \rightarrow x} \beta(t)$, as can be readily seen from its definition, and we have $\beta(x) = \alpha - \alpha_0$ for a dense set (the irrational numbers if $f$ is cuspidal and the non-cuspidal rationals otherwise) we conclude $\beta^{**}(x) = \alpha - \alpha_0$ for all $x$.

4) The case $x_0$ non-cuspidal has already been treated in the proof of theorem~\ref{thm:global}, part 3. Hence we may suppose that $f$ is cuspidal at $x_0$. We appeal again to theorem~\ref{thm:func_eq} but now we will use the explicit expression for the error term (\emph{cf.} \S\ref{sec:approx_eq}):
\[f_\alpha(x) = B|x-x_0|^{2\alpha}(x - x_0)^{-r} f_\alpha^\sigma(\sigma^{-1}x) + (\ref{eq:func_error_1}) + (\ref{eq:func_error_3}) + (\ref{eq:func_error_4}).\]
Terms (\ref{eq:func_error_1}) and (\ref{eq:func_error_3}) are everywhere differentiable, while term (\ref{eq:func_error_4}) can be differentiated at $x_0$ by lemma~\ref{lem:func_4}. Hence $f_\alpha$ is differentiable at $x_0$ if and only if the first summand is. Since $f_\alpha^\sigma$ is bounded, nonzero and periodic this will happen if and only if $2\alpha - r > 1$. The same must be true for the real and imaginary parts of $f_\alpha$, since the image of $f_\alpha^\sigma$ is not contained in any one-dimensional subspace of $\mathbb{C}$.

Hence whenever $f_\alpha'(x_0)$ exists it is given by the sum of the derivatives of the terms (\ref{eq:func_error_1}) and (\ref{eq:func_error_3}) evaluated at $x_0$ (the other terms have vanishing derivative at $x_0$). Differentiating under the integral sign and integrating by parts one obtains the desired formula.
\end{proof}

\begin{proof}[Proof of theorem~\ref{thm:local_irrat}]
Let $x_0$ an irrational number. The pointwise Hölder exponent $\beta(x_0)$ is deduced by applying corollary~\ref{cor:wavelet} to the estimates of lemma~\ref{lem:simple_bounds} if $f$ is a cusp form and of lemma~\ref{lem:irrat_bounds} otherwise. The exponent $\beta^{*}(x_0)$ follows from lemma~\ref{lem:beta_star}, while $\beta^{**}(x_0)$ was already determined in the proof of theorem~\ref{thm:local_rat}, part 3.
\end{proof}

\section{Spectrum of singularities}\label{sec:spectrum}

In order to prove theorem~\ref{thm:spectrum} we will need some tools from diophantine analysis. More concretely we will need a refinement of the following classic theorem:

\begin{theorem}[Jarník-Besicovitch]\label{thm:jarnik_b}
Let $\tau \geq 2$. The Hausdorff dimension of the set
\begin{equation}\label{eq:A_tau}
A_\tau := \left\{ x : \left|x - \frac{p}{q}\right| \ll \frac{1}{q^\tau} \text{ for infinitely many rationals } \frac{p}{q}\right\}
\end{equation}
is $2/\tau$. Moreover, if we denote by $\mathcal{H}^t$ the $t$-dimensional outer Hausdorff measure, $\mathcal{H}^{2/\tau}\big(A_\tau\big) = \infty$.
%\[\mathcal{H}^{2/\tau}\bigg( \bigcap_{\tau' < \tau} A_{\tau'}\bigg) > 0.\]
\end{theorem}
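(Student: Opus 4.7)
The plan is to prove the upper and lower bounds on $\dim_H A_\tau$ separately, strengthening the lower bound to the assertion that $\mathcal{H}^{2/\tau}(A_\tau) = +\infty$. Since $A_\tau$ is invariant under integer translations, I may restrict to $A_\tau \cap [0,1]$.

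For the upper bound, unwinding the definition (\ref{eq:A_tau}) gives, for any fixed $C > 0$,
\[A_\tau \cap [0,1] \;\subset\; \bigcap_{Q \geq 1} \bigcup_{q \geq Q} \bigcup_{p=0}^{q} B\!\left(\tfrac{p}{q}, \tfrac{C}{q^\tau}\right).\]
For any $t > 2/\tau$ the covering sum $\sum_{q \geq Q} (q+1)(2C/q^\tau)^t \ll \sum_{q \geq Q} q^{1-t\tau}$ converges and vanishes as $Q \to \infty$, so $\mathcal{H}^t(A_\tau) = 0$. This yields $\dim_H A_\tau \leq 2/\tau$.

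For the lower bound I would construct a Cantor-type subset $K \subset A_\tau$ supporting a self-similar probability measure $\mu$ to which the mass distribution principle can be applied. Concretely, choose a rapidly increasing sequence of integers $q_1 < q_2 < \cdots$; at stage $n$, inside each surviving parent interval, replace the parent by a collection of pairwise disjoint sub-intervals of radius $c\, q_n^{-\tau}$ centered at suitably spaced reduced fractions with denominator $q_n$. Tuning the growth of $q_n$ appropriately, $\mu$ satisfies $\mu(B(x,r)) \ll r^s$ for every $s < 2/\tau$, giving $\dim_H K \geq 2/\tau$. To upgrade this to $\mathcal{H}^{2/\tau}(A_\tau) = +\infty$ I would exploit the density and the translation-invariance properties of $A_\tau$ to replicate the construction within arbitrarily small intervals, producing countably many disjoint Cantor sets of uniformly positive $(2/\tau)$-dimensional measure whose total measure diverges.

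The main obstacle is the joint calibration of $q_n$ and the number of selected fractions at each stage: $q_n$ must grow quickly enough that the intervals of radius $c\, q_n^{-\tau}$ around the chosen fractions remain pairwise disjoint inside their parent, yet slowly enough that enough of them fit to saturate the packing exponent $2/\tau$. Upgrading the lower bound from positive to infinite $(2/\tau)$-dimensional measure requires that the constants implicit in the mass distribution estimate do not deteriorate when the construction is transported to arbitrarily small subintervals, which amounts to a quantitative scale invariance of the construction.
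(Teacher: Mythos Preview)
The paper does not prove this theorem at all: it quotes it as a classical result, referring to Jarn\'ik's original paper \cite{jarnik} for $\tau>2$ and remarking that the case $\tau=2$ is immediate from Dirichlet's approximation theorem (then $A_2=\mathbb{R}$). So your proposal is not competing with any argument in the paper; you are attempting something the author deliberately outsourced.

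As to your outline itself, the upper bound via the covering argument is fine, and the Cantor construction plus mass distribution principle is indeed the standard route to the lower bound. But as written your construction only yields $\mu(B(x,r))\ll r^s$ for every $s<2/\tau$, hence only $\dim_H A_\tau\ge 2/\tau$; it does not give $\mathcal{H}^{2/\tau}(A_\tau\cap[0,1])>0$. That jump---from the dimension statement to positive measure at the critical exponent---is exactly the non-trivial content separating Jarn\'ik's theorem from Besicovitch's, and it requires a sharper choice of the sequence $(q_n)$ and a more careful mass estimate than ``tuning the growth appropriately''. Your replication idea does not help here: replicating a set of $\mathcal{H}^{2/\tau}$-measure zero still gives measure zero. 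On the other hand, once you do have $\mathcal{H}^{2/\tau}(A_\tau\cap[0,1])>0$, the passage to $\mathcal{H}^{2/\tau}(A_\tau)=\infty$ is immediate from integer-translation invariance, $A_\tau=\bigcup_{n\in\mathbb{Z}}\big(n+(A_\tau\cap[0,1])\big)$, so no construction inside small intervals is needed.
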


For the proof of theorem~\ref{thm:jarnik_b} when $\tau > 2$ we refer the reader to \cite{jarnik}. The case $\tau = 2$ follows from Dirichlet's approximation theorem.

We are going to write $\mathfrak{a} \sim \mathfrak{b}$ to denote that these two cusps lie in the same orbit modulo $\Gamma$, \emph{i.e.}, that $\mathfrak{b} = \gamma(\mathfrak{a})$ for some $\gamma \in \Gamma$. The theorem we need is the following, which takes into account that rational numbers are well distributed among the different classes of cusps.

\begin{theorem}\label{thm:jarnik_b2}
Let $\mathfrak{a}$ be a cusp for $\Gamma$ and $\tau \geq 2$. The Hausdorff dimension of the set
\begin{equation*}
A_\tau^\mathfrak{a} := \left\{ x : \left|x - \frac{p}{q}\right| \ll \frac{1}{q^\tau} \text{ for infinitely many rationals } \frac{p}{q} \sim \mathfrak{a}\right\}
\end{equation*}
is $2/\tau$. Moreover, if we denote by $\mathcal{H}^t$ the $t$-dimensional outer Hausdorff measure, $\mathcal{H}^{2/\tau}\big(A_\tau^\mathfrak{a}\big) = \infty$.
\end{theorem}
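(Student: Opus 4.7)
The upper bound $\dim_H A_\tau^\mathfrak{a} \leq 2/\tau$ (and $\mathcal{H}^t(A_\tau^\mathfrak{a}) = 0$ for $t > 2/\tau$) is immediate from the inclusion $A_\tau^\mathfrak{a} \subseteq A_\tau$ combined with Theorem~\ref{thm:jarnik_b}. The content of the statement is therefore the lower bound $\mathcal{H}^{2/\tau}(A_\tau^\mathfrak{a}) = \infty$, and my plan is to obtain it from the Beresnevich--Velani mass transference principle rather than by a direct Cantor-set construction.

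The input required is the orbit-wise version of Hedlund's lemma already invoked in \S\ref{sec:results}: for every cusp class $\mathfrak{a}$ there exists $K > 0$ such that every irrational $x$ admits infinitely many approximations $|x - p/q| \leq K/q^2$ by rationals $p/q \sim \mathfrak{a}$ (\emph{cf.} \cite[\S3]{patterson}). Since the set of irrationals has full Lebesgue measure in $\mathbb{R}$, this immediately implies that
\[\limsup_{\substack{p/q \sim \mathfrak{a} \\ q \to \infty}} B(p/q, K/q^2)\]
has full Lebesgue measure in $\mathbb{R}$.

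With this at hand the Beresnevich--Velani mass transference principle, applied in ambient dimension $1$ to the balls $B(p/q, K/q^2)$ and the Hausdorff exponent $s = 2/\tau \in (0, 1]$, asserts that
\[\mathcal{H}^{2/\tau}\Bigl(\limsup_{\substack{p/q \sim \mathfrak{a} \\ q \to \infty}} B\bigl(p/q, (K/q^2)^{\tau/2}\bigr)\Bigr) = \infty.\]
Since $(K/q^2)^{\tau/2} = K^{\tau/2}/q^\tau$, this $\limsup$-set is contained in $A_\tau^\mathfrak{a}$, which therefore inherits infinite $(2/\tau)$-dimensional Hausdorff measure, and in particular $\dim_H A_\tau^\mathfrak{a} \geq 2/\tau$.

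The main obstacle is not the mass transference step---which is a black-box application of a known principle---but rather securing Hedlund's lemma for a \emph{single} cusp orbit of $\Gamma$, with a constant $K$ uniform in $x$. This does not follow from Dirichlet's approximation theorem, since the latter offers no control over which $\Gamma$-orbit contains the approximating rational; one must instead exploit the geometry of $\Gamma\backslash\mathbb{H}$ (or equivalently the ergodicity of the geodesic flow), which is precisely what Hedlund's lemma as developed in \cite[\S3]{patterson} provides. Once this is granted, the rest of the argument is essentially formal.
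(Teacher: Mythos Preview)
Your argument is correct, but it proceeds quite differently from the paper's.

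The paper does \emph{not} invoke the orbit-wise Hedlund lemma or the mass transference principle. Instead, it first reduces to the case where $\Gamma$ is normal in $\slgroup_2(\mathbb{Z})$, so that $\slgroup_2(\mathbb{Z})$ acts on the finite set of cusp classes. A short computation (mean value theorem plus $q' = cp + dq$) shows that any $\gamma \in \slgroup_2(\mathbb{Z})$ sends $A_\tau^\mathfrak{a}$ to $A_\tau^{\gamma(\mathfrak{a})}$; since $\gamma$ is locally bi-Lipschitz away from its pole and the sets are periodic, one obtains $\mathcal{H}^t(A_\tau^{\gamma(\mathfrak{a})}) \asymp \mathcal{H}^t(A_\tau^\mathfrak{a})$ for every $t$. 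As $\slgroup_2(\mathbb{Z})$ acts transitively on cusp classes, all the $A_\tau^\mathfrak{a}$ have comparable $\mathcal{H}^t$-measure, and since their (finite) union is $A_\tau$, the classical Jarn\'ik--Besicovitch theorem gives both the dimension and the infinite $\mathcal{H}^{2/\tau}$-measure simultaneously.

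Your route trades this symmetry argument for two external inputs: the uniform orbit-wise Dirichlet-type statement from \cite{patterson} and the Beresnevich--Velani mass transference principle. This is cleaner conceptually and generalises immediately to settings where no ambient transitive group is available (e.g.\ arbitrary Fuchsian groups of the first kind, recovering Velani's result \cite{velani}), at the cost of importing the 2006 MTP as a black box. The paper's proof, by contrast, is entirely elementary modulo the classical theorem and uses nothing beyond the bi-Lipschitz behaviour of M\"obius maps---in particular it sidesteps Hedlund's lemma altogether for this statement.
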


Theorem~\ref{thm:jarnik_b2} is a particular case of more general results about Fuchsian groups (\emph{cf.} \cite{velani}). We provide here an elemental proof based on theorem~\ref{thm:jarnik_b}.

\begin{proof}
Note that we may assume without loss of generality that $\Gamma$ is a normal subgroup of $\slgroup_2(\mathbb{Z})$. Indeed, if this is not the case, we simply replace $\Gamma$ with the biggest normal group it contains, \emph{i.e.}, the intersection of all its conjugates. The normality of $\Gamma$ implies that the action of $\slgroup_2(\mathbb{Z})$ on the equivalence classes of cusps modulo $\Gamma$ is well-defined.

Let $\gamma$ be any matrix in $\slgroup_2(\mathbb{Z})$ and $x$ an irrational number in $A_\tau^\mathfrak{a}$. We claim that if $p/q$ is a rational number in a neighborhood of $x$ and $q'$ denotes the denominator of $\gamma(p/q)$ then $q' \ll q$. Indeed $q' = cp+dq$, and $p \ll q$ because $|p/q| \sim |x|$. From this together with the mean value theorem applied to $|\gamma(x) - \gamma(p/q)|$ we deduce that $\gamma(x) \in A_\tau^{\gamma(\mathfrak{a})}$. The argument can also be applied to $\gamma^{-1}$ and therefore:
\begin{equation}\label{eq:A_tau_gamma}
\gamma(A_\tau^\mathfrak{a}) = A_\tau^{\gamma(\mathfrak{a})}.
\end{equation}

For any Lipschitz function $\varphi$ with Lipschitz constant $C$ and any set $\Omega$ we have
\begin{equation}\label{eq:hausdorff_ineq}
\mathcal{H}^t\big(\varphi(\Omega)\big) \leq C^t \mathcal{H}^t(\Omega).
\end{equation}
This follows from the definition of Hausdorff outer measure. We want to apply this to prove that all the sets $A_\tau^\mathfrak{a}$ have roughly the same size when $\mathfrak{a}$ ranges through a set of representatives of the equivalence classes of the cusps modulo $\Gamma$, but the Möbius transformation $\gamma$ is not Lipschitz in any neighborhood of its pole. This problem has a simple workaround. Let $m$ be the width of the cusp $\infty$ and $I$ any interval of length $m$ not containing the pole of $\gamma$, and whose image $J = \gamma(I)$ is also of length $m$. Then from (\ref{eq:A_tau_gamma}) we have
\begin{align*}
\gamma(A_\tau^\mathfrak{a} \cap I) &= A_\tau^{\gamma(\mathfrak{a})} \cap J \\
A_\tau^\mathfrak{a} + m &= A_\tau^\mathfrak{a}.
\end{align*}
Applying (\ref{eq:hausdorff_ineq}),
\[\mathcal{H}^t(A_\tau^{\gamma(\mathfrak{a})}) \ll \mathcal{H}^t(A_\tau^\mathfrak{a}).\]
The opposite inequality is also true and hence the Hausdorff dimension of the set $A_\tau^\mathfrak{a}$ must be independent of $\mathfrak{a}$. Since we also know by theorem~\ref{thm:jarnik_b} that $A_\tau = \bigcup_{\mathfrak{a}} A_\tau^\mathfrak{a}$ has dimension $2/\tau$, we conclude that all the $A_\tau^\mathfrak{a}$ must have exactly that dimension. It is also immediate that $\mathcal{H}^{2/\tau}\big(A_\tau^\mathfrak{a}\big) = \infty$.
\end{proof}

% TODO: REMOVE +?
\begin{corollary}\label{cor:spec_hausdorff}
Let $2 \leq \tau \leq +\infty$. The Hausdorff dimension of the set $\{x : \tau_x = \tau\}$ is $2/\tau$.
\end{corollary}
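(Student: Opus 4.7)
The plan is to reduce the question to Theorem~\ref{thm:jarnik_b2} applied to the finite union $A_\tau^{\mathrm{nc}} := \bigcup_\mathfrak{a} A_\tau^\mathfrak{a}$ ranging over a set of representatives of the non-cuspidal cusp classes modulo $\Gamma$. Since there are only finitely many cusp classes, $A_\tau^{\mathrm{nc}}$ inherits from Theorem~\ref{thm:jarnik_b2} both the Hausdorff dimension $2/\tau$ and the property $\mathcal{H}^{2/\tau}(A_\tau^{\mathrm{nc}}) = +\infty$.

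From the definition (\ref{eq:tau_x}) I would first derive the set identities
\[\{x : \tau_x \geq \tau\} = \bigcap_{\tau' < \tau} A_{\tau'}^{\mathrm{nc}}, \qquad \{x : \tau_x > \tau\} = \bigcup_{\tau' > \tau} A_{\tau'}^{\mathrm{nc}},\]
so that $\{x : \tau_x = \tau\}$ is the difference of the first set and the second. The second union is monotone as $\tau'$ decreases, so one can rewrite it along a countable cofinal sequence as $\bigcup_{n \geq 1} A_{\tau + 1/n}^{\mathrm{nc}}$, each summand of dimension $2/(\tau + 1/n) < 2/\tau$. Hence $\mathcal{H}^{2/\tau}$ vanishes on $\{x : \tau_x > \tau\}$ by countable subadditivity. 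Meanwhile $\{x : \tau_x \geq \tau\}$ already contains $A_\tau^{\mathrm{nc}}$, so it has infinite $(2/\tau)$-Hausdorff measure. Subtracting gives $\mathcal{H}^{2/\tau}(\{x : \tau_x = \tau\}) = +\infty$, which in particular provides the lower bound $\dim\{x : \tau_x = \tau\} \geq 2/\tau$.

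For the matching upper bound I would use the inclusion $\{x : \tau_x = \tau\} \subseteq A_{\tau'}^{\mathrm{nc}}$ valid for any $\tau' < \tau$, which yields $\dim \leq 2/\tau'$; letting $\tau' \nearrow \tau$ gives $\dim \leq 2/\tau$. The endpoint $\tau = +\infty$ is then handled at once by the same inclusion, now valid for every finite $\tau'$, which forces dimension $0$.

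There is no serious obstacle here beyond keeping the set-theoretic bookkeeping straight; all the geometric content is already packaged in Theorem~\ref{thm:jarnik_b2}. The only caveat worth flagging is that the statement implicitly presupposes that non-cuspidal cusps exist, \emph{i.e.}\ that $f$ is not a cusp form, which is precisely the setting in which Theorem~\ref{thm:spectrum} invokes this corollary.
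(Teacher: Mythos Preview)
Your argument is correct and follows essentially the same route as the paper: both express $\{x:\tau_x=\tau\}$ as the difference $\bigcap_{\tau'<\tau}A_{\tau'}^{\mathrm{nc}}\setminus\bigcup_{\tau'>\tau}A_{\tau'}^{\mathrm{nc}}$, obtain the upper bound from the inclusion into $A_{\tau'}^{\mathrm{nc}}$ for $\tau'<\tau$, and the lower bound from $\mathcal{H}^{2/\tau}(A_\tau^{\mathrm{nc}})=\infty$ together with $\mathcal{H}^{2/\tau}\big(\bigcup_{\tau'>\tau}A_{\tau'}^{\mathrm{nc}}\big)=0$. The paper treats $\tau=2$ separately via the fact that $\tau_x\geq 2$ for every irrational (Hedlund/Patterson), whereas your uniform argument covers this case as well once one notes that the upper bound $\dim\leq 1$ is automatic in $\mathbb{R}$; otherwise the two proofs are the same.
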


For the definition of $\tau_x$ see (\ref{eq:tau_x}).

\begin{proof}
Assume $\tau > 2$ and let $\Xi$ be a set of representatives of the equivalence classes of cusps at which $f$ is not cuspidal. We have the identity
\[\{x : \tau_x = \tau\} = \bigcap_{\tau' < \tau} \bigcup_{\mathfrak{a} \in \Xi} A_{\tau'}^\mathfrak{a} \setminus \bigcup_{\tau' > \tau} \bigcup_{\mathfrak{a} \in \Xi} A_{\tau'}^\mathfrak{a}.\]
By theorem~\ref{thm:jarnik_b2} the set on the right hand side has Hausdorff dimension at most $2/\tau$. On the other hand from the same theorem one deduces that for $\tau < +\infty$ we have
\[\mathcal{H}^{2/\tau}\bigg(\bigcap_{\tau' < \tau} \bigcup_{\mathfrak{a} \in \Xi} A_{\tau'}^\mathfrak{a}\bigg) = \infty, \quad \quad \mathcal{H}^{2/\tau}\bigg(\bigcup_{\tau' > \tau} \bigcup_{\mathfrak{a} \in \Xi} A_{\tau'}^\mathfrak{a}\bigg) = 0.\]
This implies the other inequality for the Hausdorff dimension.

The case $\tau = 2$ follows from the fact that $\tau_x \geq 2$ for every irrational number $x$ (see \cite[\S3]{patterson}), while by the above argument the set $\{x : \tau_x > 2\}$ has vanishing Lebesgue measure.
\end{proof}

\begin{proof}[Proof of theorem~\ref{thm:spectrum}]
The set $\{x : \beta(x) = \delta\}$ is completely determined by theorems~\ref{thm:local_rat} and \ref{thm:local_irrat}. Its Hausdorff dimension in the case of cuspidal $f$ is immediate, while if $f$ is noncuspidal it follows from corollary~\ref{cor:spec_hausdorff}.
\end{proof}

\section{Riemann's example}\label{sec:riemann}

In this section we employ the developed machinery to explain some aspects of the graph of Riemann's example (\ref{eq:riemann}), plotted in figure~\ref{fig:riemann}. The material in this section is not new: a similar but more detailed exposition is given by Duistermaat in \cite{duistermaat}. Our analysis, however, is readily applicable to any other modular form.

Riemann's example $\varphi$ satisfies $2\varphi(x) = \Im \theta_1(x)$, where $\theta$ stands for Jacobi's theta function $\theta(z) = \sum_{n \in \mathbb{Z}} e^{n^2 \pi i z}$. This is a modular form of weight $1/2$ for the group $\Gamma_\theta$, consisting of all matrices in $\slgroup_2(\mathbb{Z})$ of the form $\left(\begin{smallmatrix} \text{odd} & \text{even} \\ \text{even} & \text{odd} \end{smallmatrix}\right)$ or $\left(\begin{smallmatrix} \text{even} & \text{odd} \\ \text{odd} & \text{even} \end{smallmatrix}\right)$. The $\Gamma_\theta$-orbit of $0$ corresponds to $\infty$ together will all the rationals $p/q$ with either $p$ even and $q$ odd, or $p$ odd and $q$ even. All the remaining rationals ($p/q$ with both $p$ and $q$ odd) constitute the $\Gamma_\theta$-orbit of $1$. The modular form $\theta$ is cuspidal at $1$ but not at $0$ and the associated multipliers $\mu_\gamma$ are always 8th roots of unity. For the proofs of these facts we refer the reader to \cite{duistermaat}.

Note that we may apply the regularity theorems to recover Hardy's and Gerver's theorems and determine the Hölder exponents of $\varphi$ at every point. Its spectrum of singularities, first obtained by Jaffard in \cite{jaffard}, also follows from theorem~\ref{thm:spectrum}.

Jacobi's function $\theta$ is classically denoted $\vartheta_3$, as it has two companions which are also modular forms of weight $1/2$ for conjugated groups of $\Gamma_\theta$:
\begin{equation*}
\tilde{\theta}(z) = \vartheta_2(z) = \sum_{n \in \mathbb{Z}} e^{\left(n+\frac{1}{2}\right)^2 \pi i z} \quad \text{and} \quad \theta(z+1) = \vartheta_4(z) = \sum_{n \in \mathbb{Z}} (-1)^n e^{n^2 \pi i z}.
\end{equation*}
The nomenclature $\tilde{\theta}$ is not standard but we employ it here as a convenient way to avoid problems with subscripts.

\begin{figure}
\centering
\includegraphics[height=0.32\textheight]{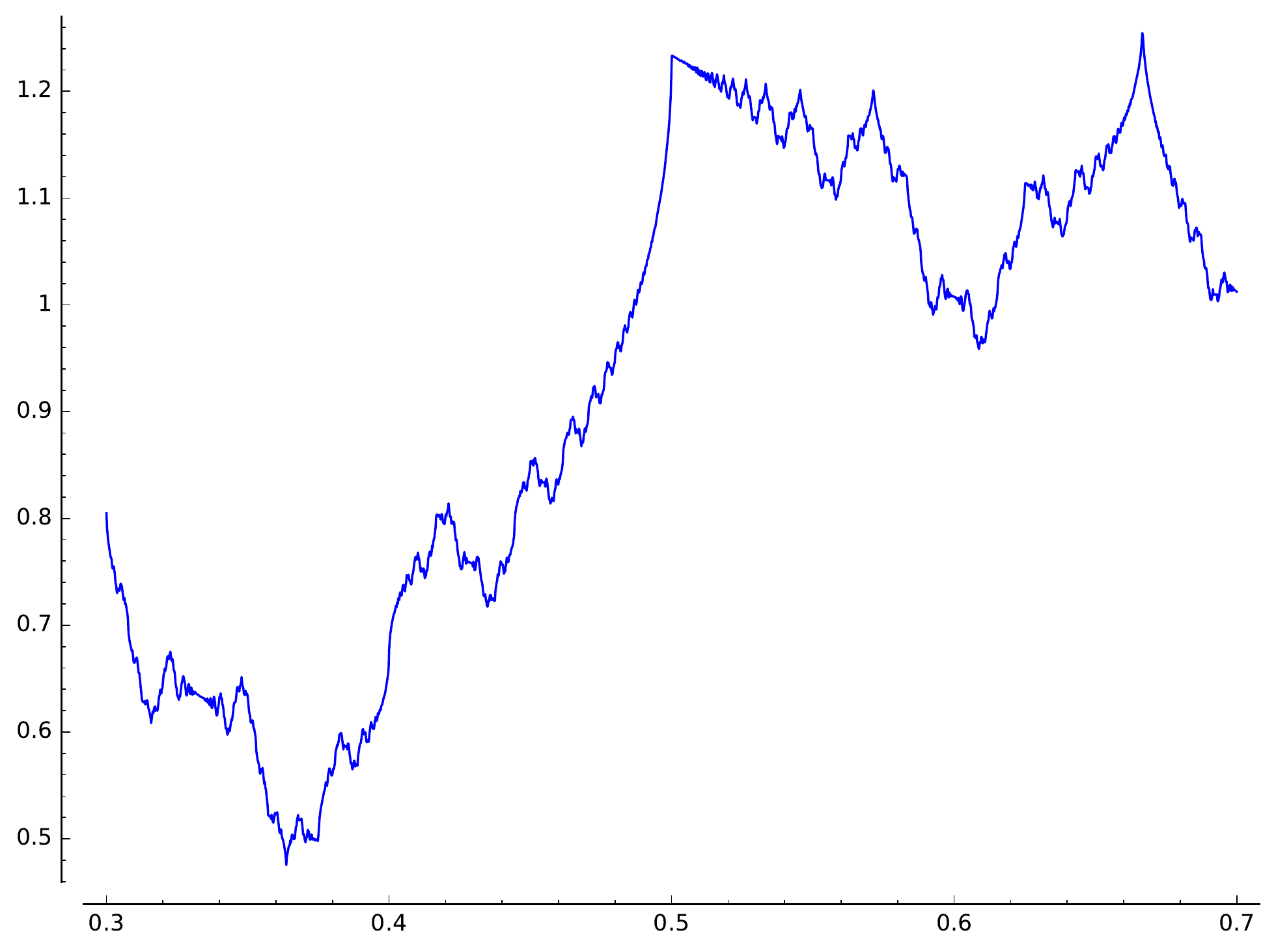}
\includegraphics[height=0.32\textheight]{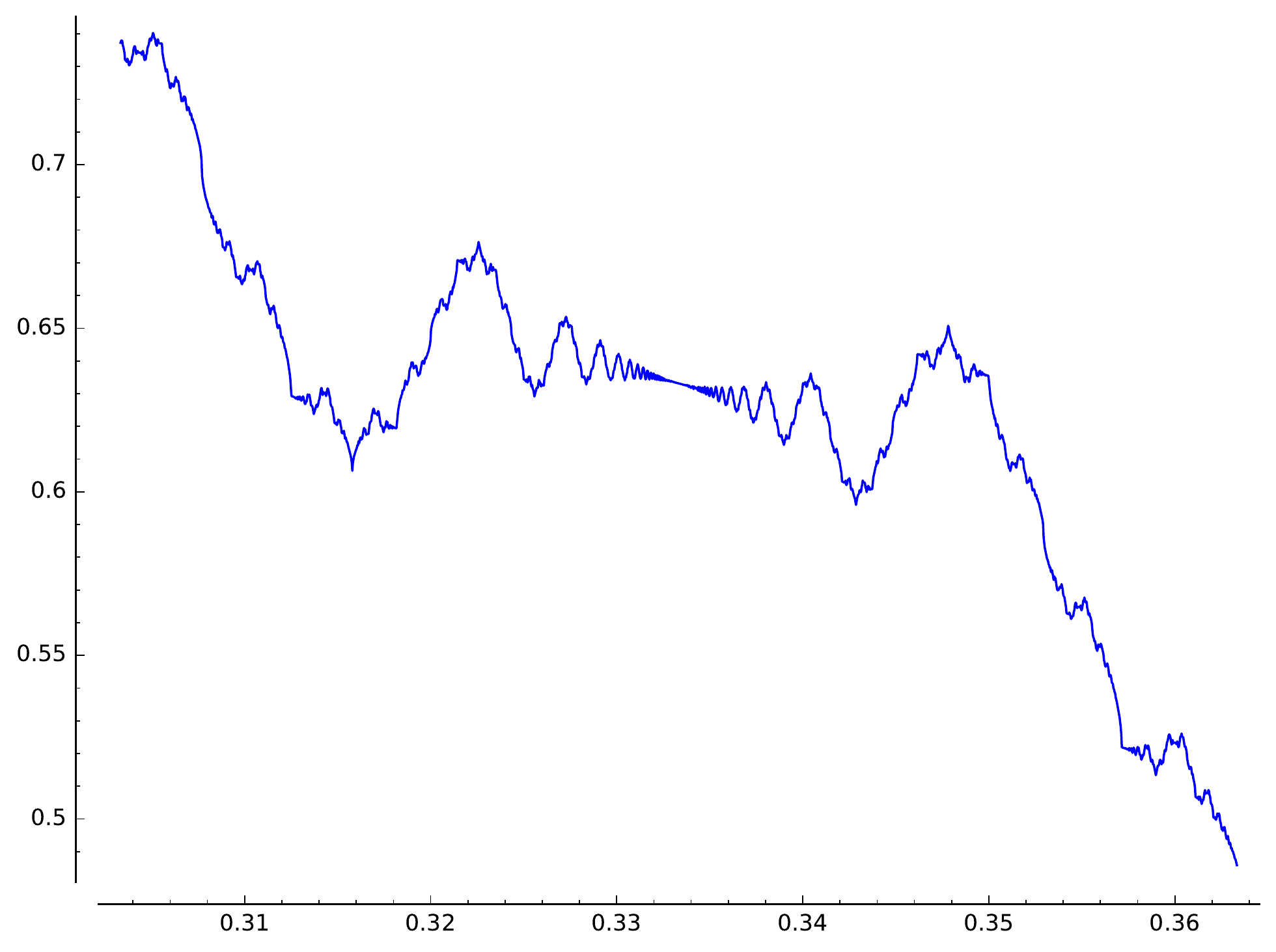}
\includegraphics[height=0.32\textheight]{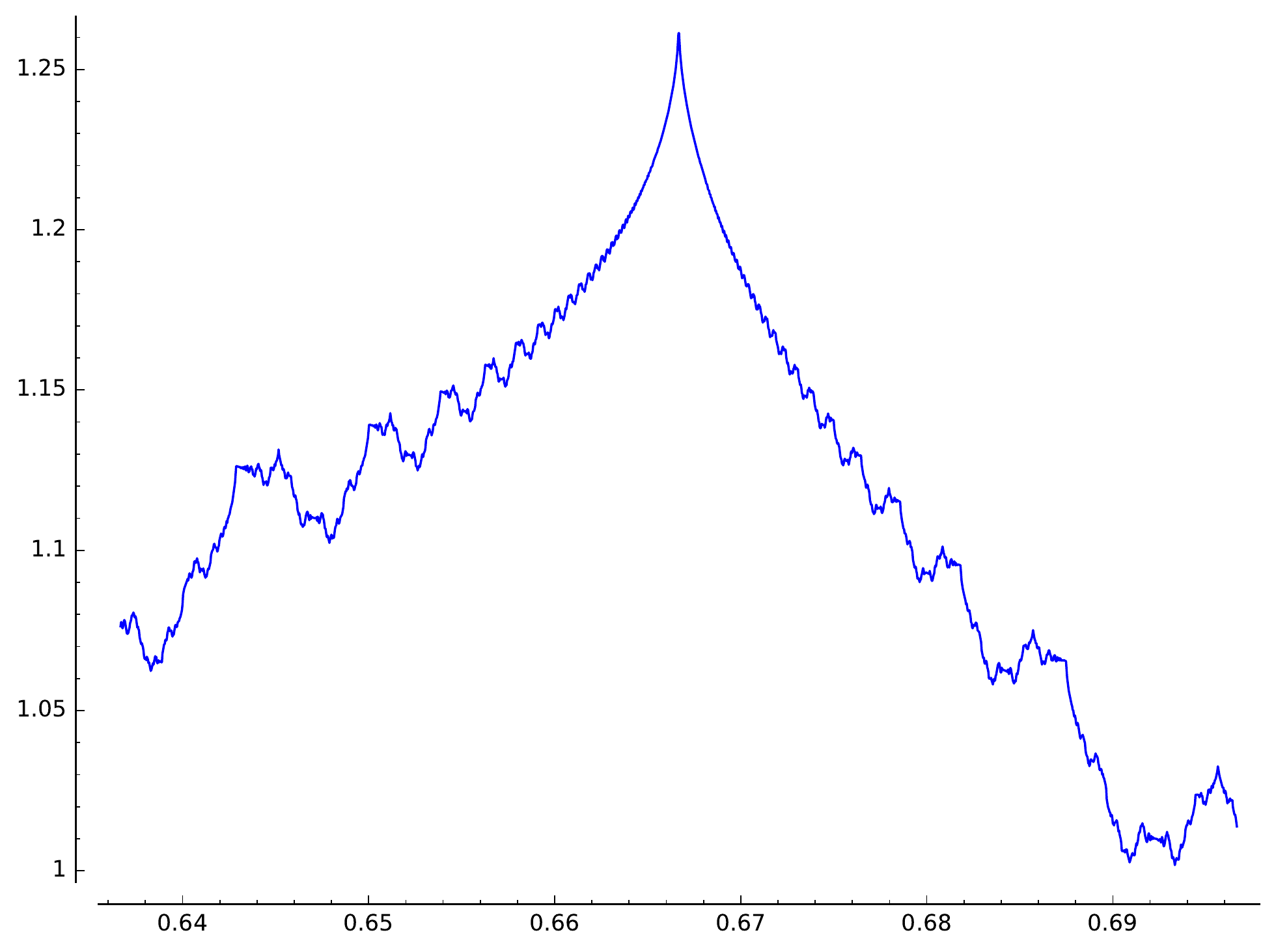}
\caption{Detail of $\varphi$ near $1/2$, $1/3$ and $2/3$, respectively.}
\label{fig:details}
\end{figure}

\begin{figure}[h]
\centering
\includegraphics[width=0.45\textwidth]{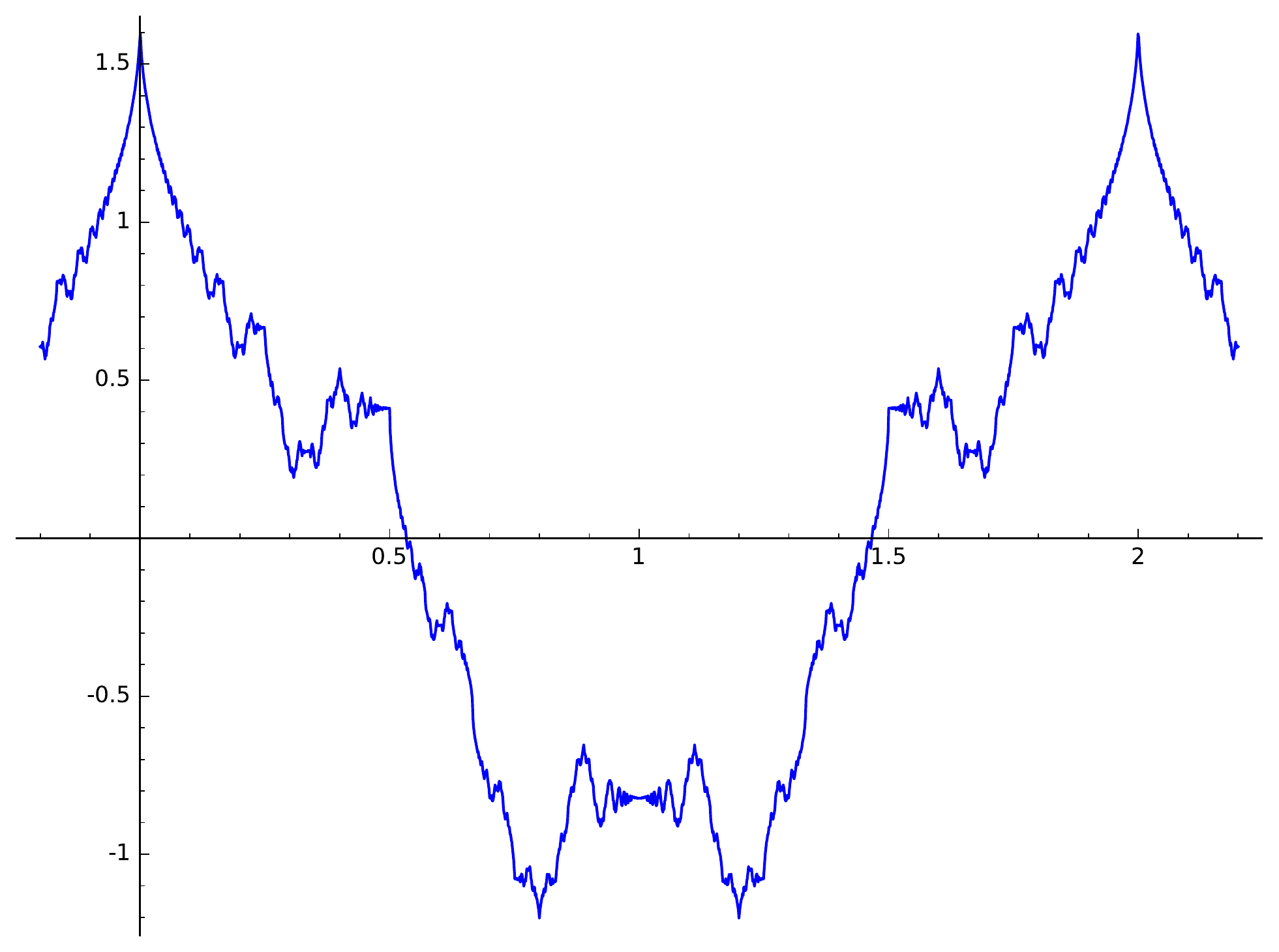}
\includegraphics[width=0.45\textwidth]{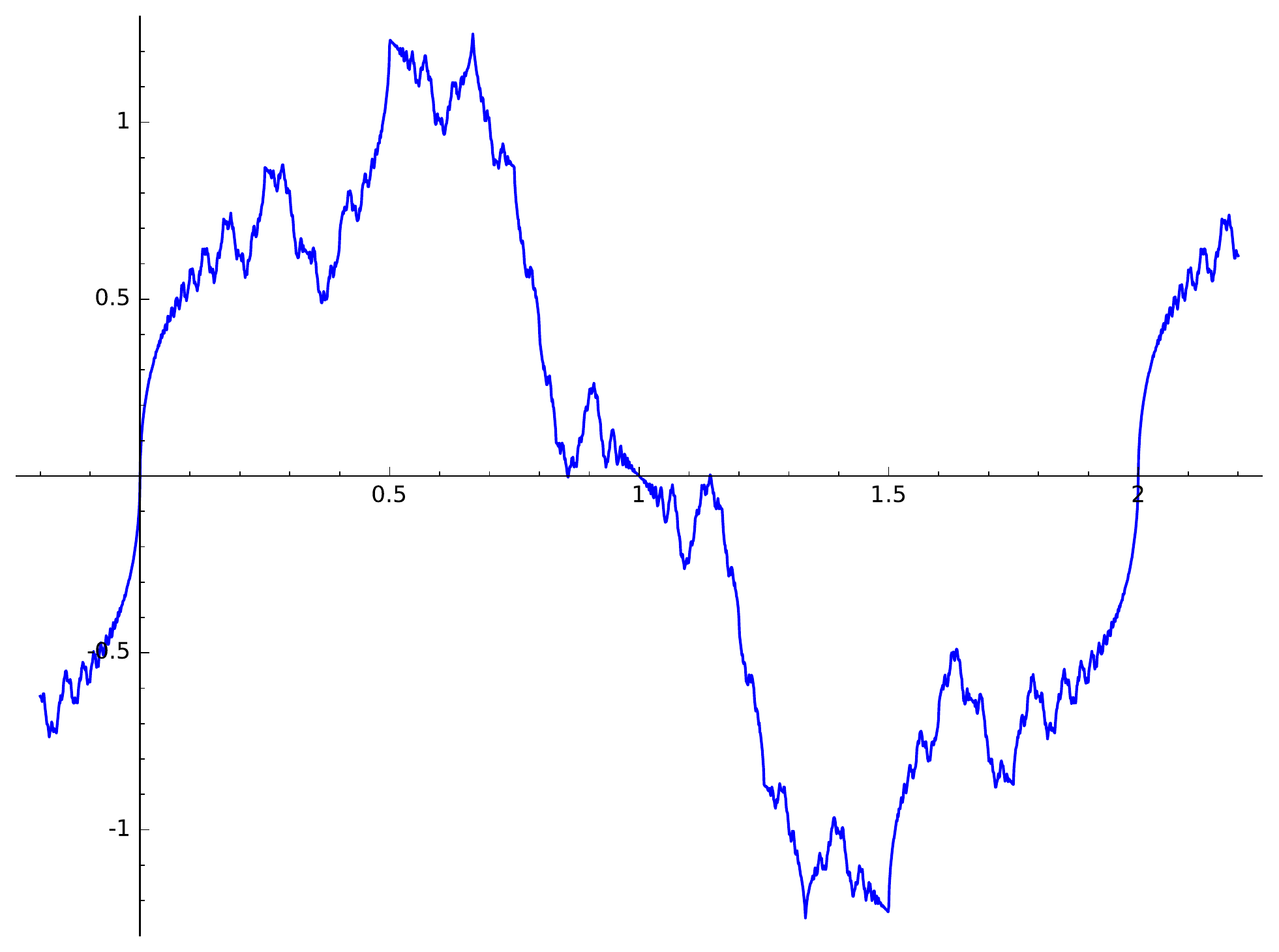}
\includegraphics[width=0.45\textwidth]{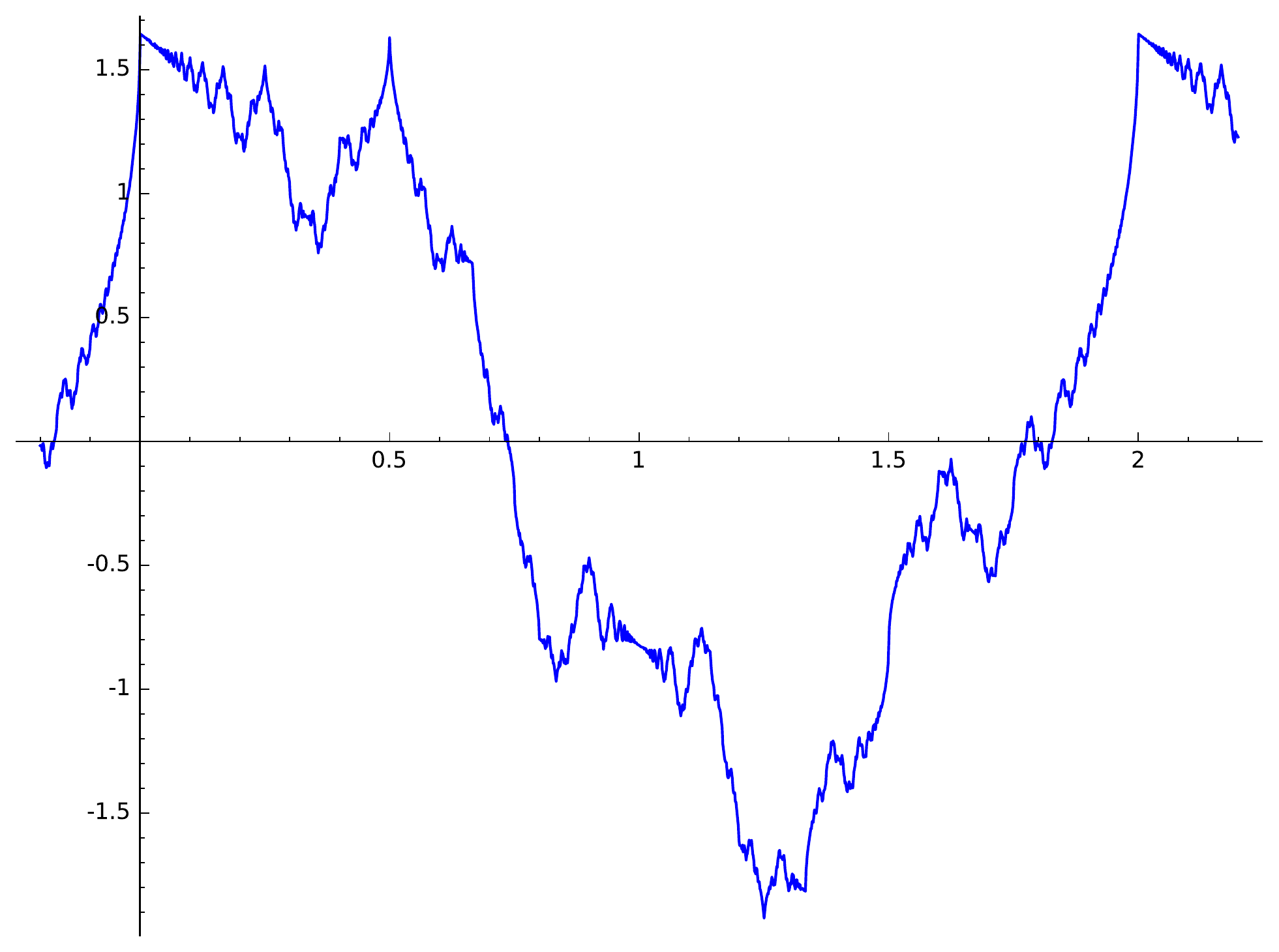}
\includegraphics[width=0.45\textwidth]{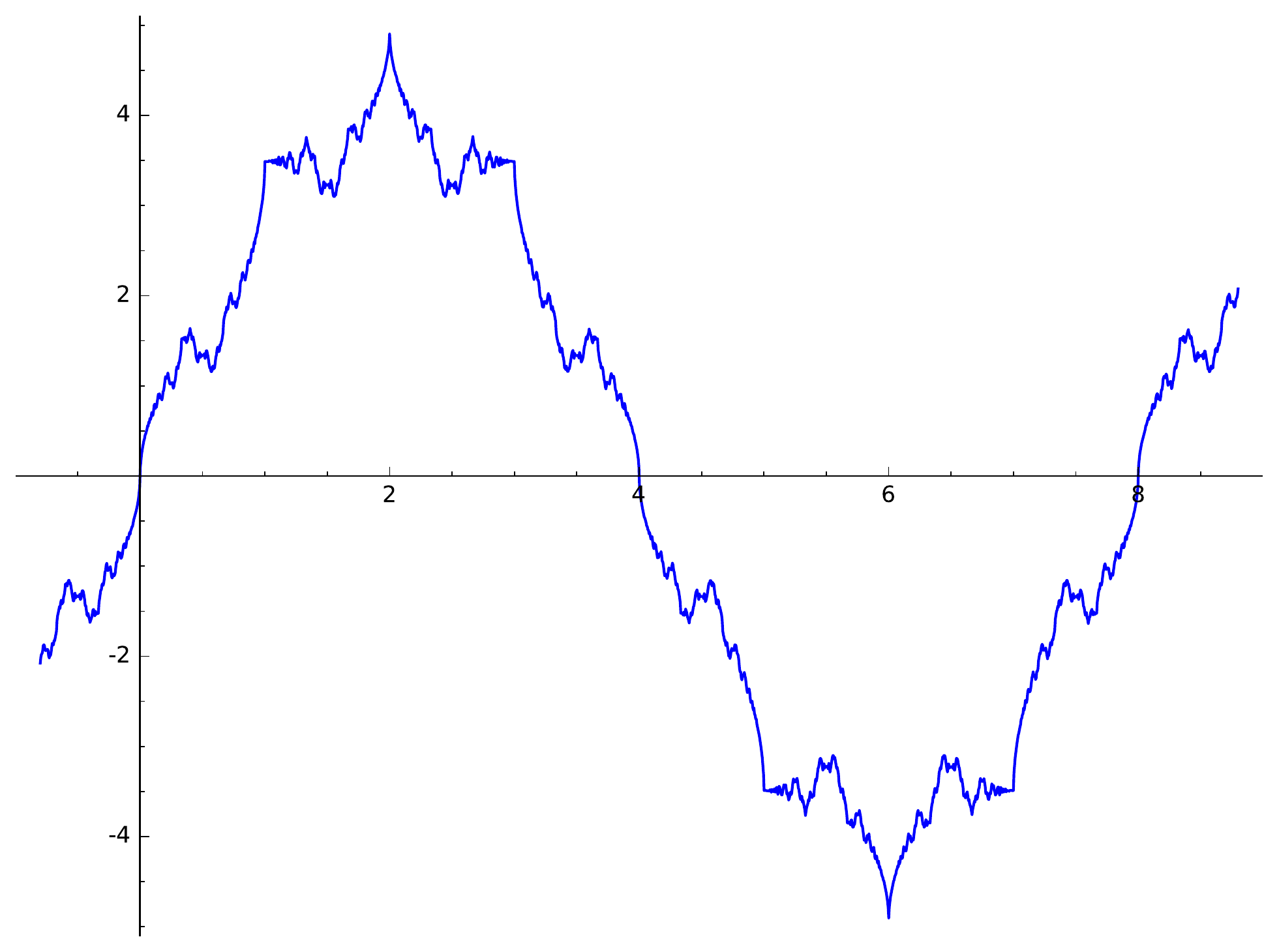}
\caption{Graphs of $\Re \theta_1$ (top-left), $\Im \theta_1$ (top-right), $\Re \theta_1 + \Im \theta_1$ (bottom-left) and $\Im \tilde{\theta}_1$ (bottom-right).}
\label{fig:patterns}
\end{figure}

Given any matrix $\sigma \in \slgroup_2(\mathbb{Z})$ the modular form $\theta^\sigma$ is either a multiple of $\vartheta_2 = \tilde{\theta}$, $\vartheta_3 = \theta$ or $\vartheta_4(z) = \theta(z+1)$, the constant being an 8th root of unity (see theorem~7.1.2 of \cite{rankin}). Since $\theta^\sigma$ is cuspidal at $\infty$ if and only if $\theta\big(\sigma(\infty)\big) = 0$, one concludes that:
\[\theta^\sigma(z) = \begin{cases}
C\theta(z) \text{ or } C\theta(z + 1) & \text{if } \sigma(\infty) \sim 0 \\
C\tilde{\theta}(z) & \text{if } \sigma(\infty) \sim 1.
\end{cases}\]

We now apply theorem~\ref{thm:func_eq} with $\alpha = 1$, $r = 1/2$, to study the behavior of $\varphi = \frac{1}{2} \Im \theta_1$ in the neighborhood of a given rational point $x_0$. The resulting expansion around $x_0$ is of the form:
\[\varphi(x) = \Im\bigg[C \sqrt{x-x_0}\bigg] + \Im\bigg[C' (x-x_0)^{3/2} f_1(\sigma^{-1}x + \tau)\bigg] + h(x).\]
The constant $C$ is nonzero if and only if $x_0 \sim 0$, and in this case $f = \theta$. Otherwise $f = \tilde{\theta}$. The constant $C'$ is always nonzero, and both constants have the argument of an 8th root of unity. Finally, $\tau$ is either $0$ or $1$.

Some deductions are immediate. The first one being that $\varphi$ has singularities of square root type at every rational of the form $\text{odd}/\text{even}$ or $\text{even}/\text{odd}$ (either at one side or both sides of the rational). The second one is that at either side of any rational number $\varphi$ mimics the graph of some periodic function $\Im C' f_1$. Note that as $\sigma^{-1}$ has a simple pole at $x_0$, this pattern repeats indefinitely towards the rational, with its amplitude decreasing as a $3/2$ power of the remaining distance and its frequency roughly proportional to $|x-x_0|^{-1}$. See figure~\ref{fig:details} for some examples of this behavior, where some square root singularities are also clearly visible.

Since the argument of $C'$ is an integer multiple of $\pi/4$ we also deduce that $\Im C' f_1$ is either $\Re{f_1}$, $\Im{f_1}$ or $\Re{f_1} + \Im{f_1}$, or the mirror image of one of these three functions, \emph{i.e.}, the result of performing the change of variables $x \mapsto -x$ either in the domain, in the codomain or both. The situation is even simpler when $f = \tilde{\theta}$, as all these functions are then translates and mirror images of each other (\emph{cf.} theorem~7.1.2 of \cite{rankin}) and therefore we need only to consider $\Im \tilde{\theta}_1$. Hence the graph of $\Im C' f_1$ corresponds, up to symmetries, to one of the four genuinely distinct patterns that appear in figure~\ref{fig:patterns}. Note that in figure~\ref{fig:details} all four patterns appear.

A different kind of self-similarities, modulo a $\mathcal{C}^{1, 0}$ function which need not have any decay, may be found around fixed points of transformations lying in $\Gamma_\theta$, as deduced from theorem~\ref{thm:func_eq} by letting $x$ approach the fixed point of the transformation. Note that for any finite index subgroup $\Gamma$ of $\slgroup_2(\mathbb{Z})$ the set of real points fixed by transformations in $\Gamma$ comprises $\mathbb{Q}$ and all the quadratic surds (fixed respectively by parabolic and hyperbolic transformations). Indeed, if $\sigma \in \slgroup_2(\mathbb{Z})$ is parabolic (hyperbolic) and fixes a real point, some power $\sigma^n$ lies in $\Gamma$, is also parabolic (hyperbolic) and fixes the same point.

\section{Cusp forms for $\Gamma_0(N)$}\label{sec:gamma0}

Fix an arbitrary integer $N \geq 1$ and let $f$ be a cusp form of integer weight $r$ for the group $\Gamma_0(N)$ and trivial multiplier system. Note that $r$ must necessarily be even. For any $\alpha > r/2$ the function $f_\alpha$ is well-defined and we may consider $g = \Re f_\alpha$ or $\Im f_\alpha$. Under these conditions the constant $B$ in theorem~\ref{thm:func_eq} is always positive, and hence for every rational $x_0 \sim \infty$ modulo $\Gamma_0(N)$ we have
\begin{equation}\label{eq:cusp_pattern}
g(x) = B|x-x_0|^{2\alpha - r}g(\sigma^{-1}x) + E(x)
\end{equation}
for some $\sigma \in \slgroup_2(\mathbb{R})$ satisfying $\sigma^{-1} x_0 = \infty$ and the function $E$ lying in the spaces specified by theorem~\ref{thm:func_eq}. An interesting question is whether an approximate functional equation of the form \eqref{eq:cusp_pattern}, with $B$ real and $E$ with the same regularity, relating $g$ with itself, exists for other rational numbers. Note this will happen for the rational $x_0$ as long as we are able to find some $\sigma \in \slgroup_2(\mathbb{R})$ satisfying $\sigma^{-1} x_0 = \infty$ and such that $f^\sigma = f|_\sigma$ equals $Cf$ for a real constant $C$ (and this is likely a necessary condition). In this section we provide sufficient conditions for this to hold and study some examples.

\newsavebox{\boxsigma} % smallmatrix won't work within figure captions, so we must save it first
\savebox{\boxsigma}{$\sigma = \left(\begin{smallmatrix} 7 & 3 \\ 14 & 7 \end{smallmatrix}\right)$}

\begin{figure}[h]
\centering
\includegraphics[width=0.45\textwidth]{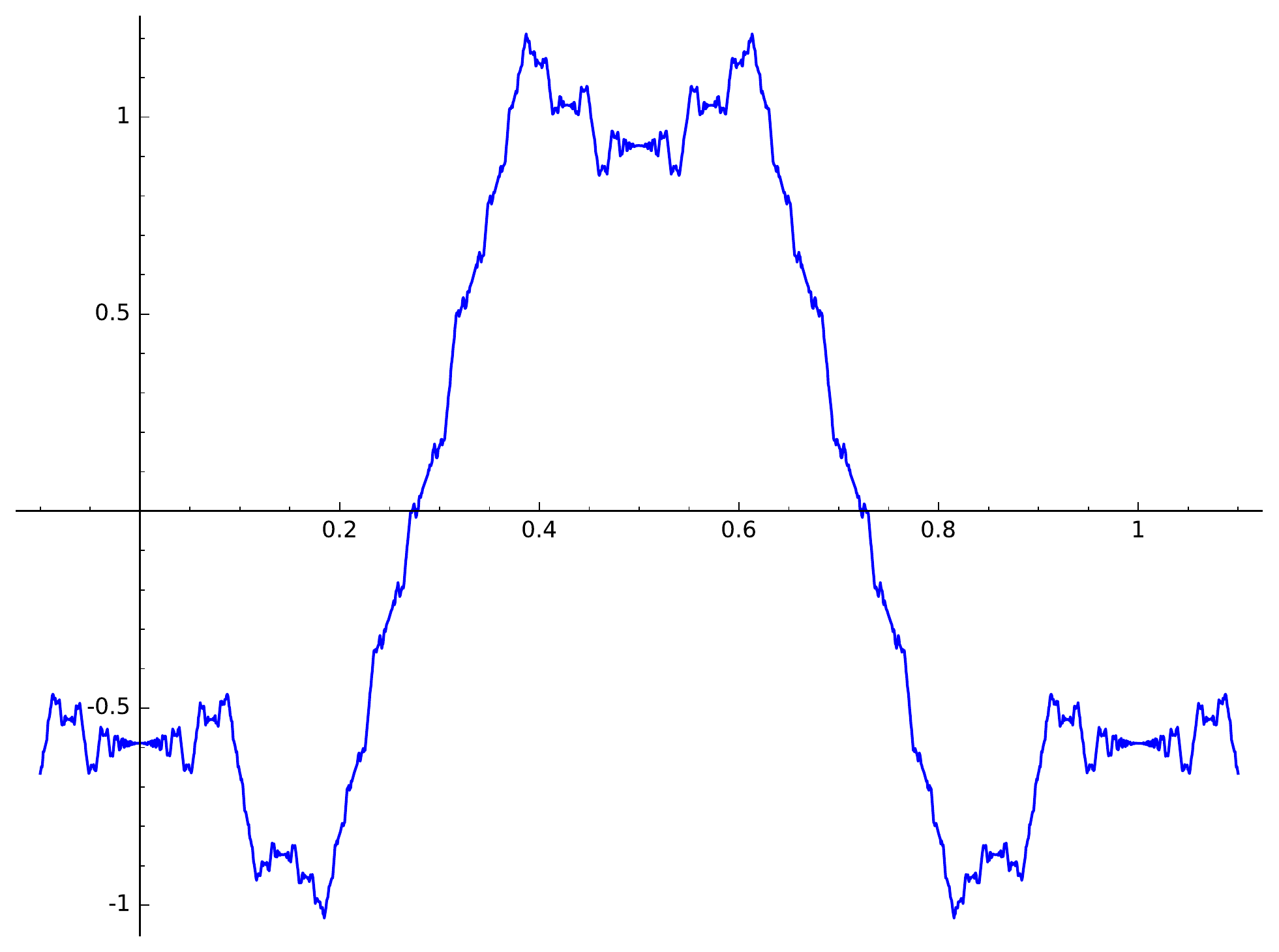}
\includegraphics[width=0.45\textwidth]{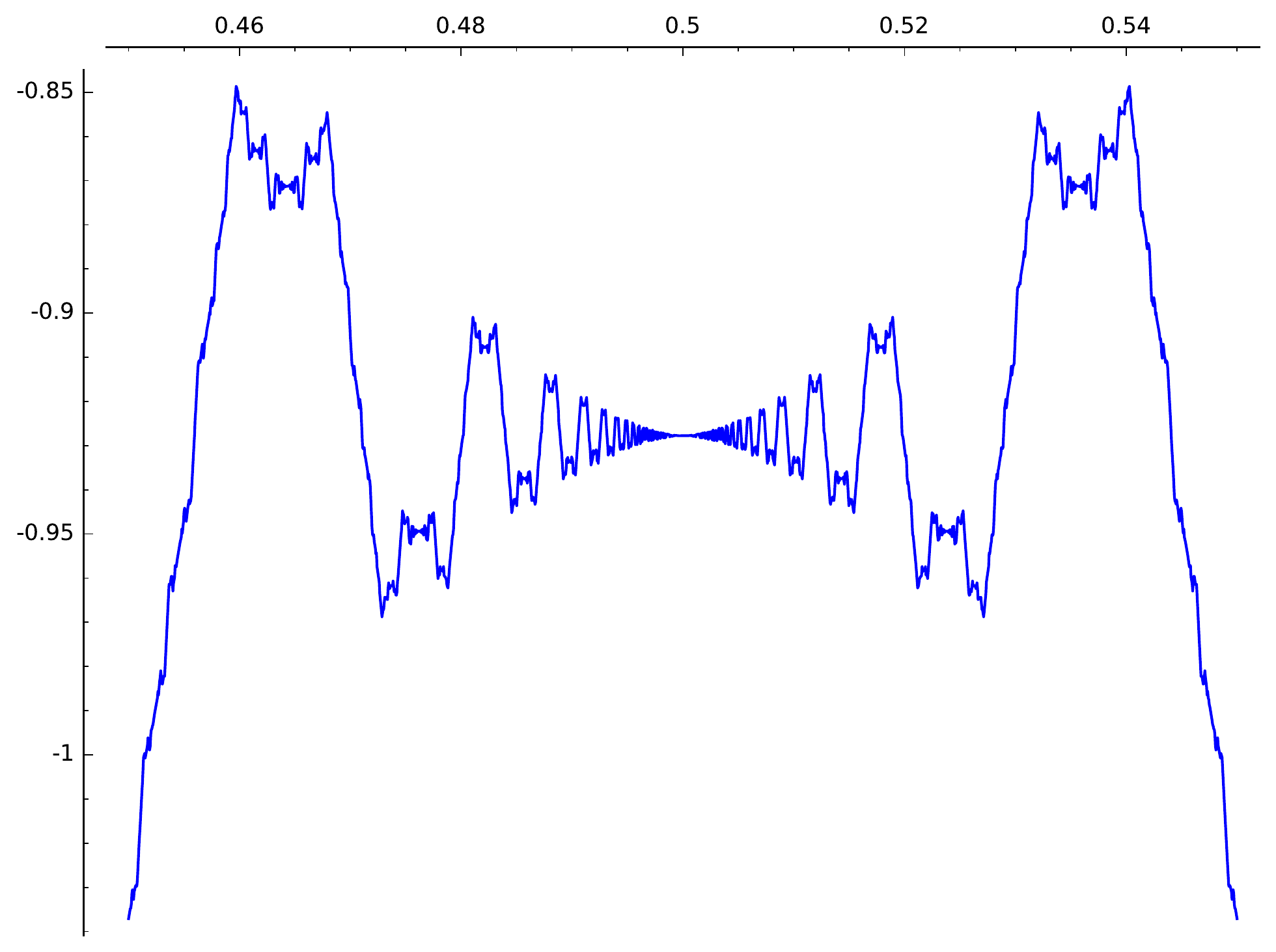}
\caption{Left: Plot of $-\Re f_{9/5}$, where $f$ is the newform on $\Gamma_0(14)$. Right: detail of $\Re f_{9/5}$ at $1/2$. This rational is not in $\Gamma_0(14) \cdot \infty$, but the matrix \usebox{\boxsigma} satisfies $\sigma(\infty) = 1/2$ and $f|_\sigma = -f$.}
\label{fig:e14}
\end{figure}

Some notation first. For any two integers $n$ and $m$ we denote by $(n, m)$ its greatest common divisor, and for every prime $p$ we denote by $[n]_p$ the largest power of $p$ dividing $n$. For every divisor $Q \mid N$ satisfying $(Q, N/Q) = 1$ we define the matrix
\[\omega_Q := \left(\begin{matrix}
Qx & y \\
Nz & Qw
\end{matrix}\right),
\quad \quad x, y, z, w \in \mathbb{Z}, \quad \det \omega_Q = Q,\]
which is unique up to left and right multiplication by elements of $\Gamma_0(N)$. The matrices $\omega_Q$ are called Atkin-Lehner involutions and satisfy $Q^{-1} \omega_Q^2 \in \Gamma_0(N)$ and $\omega_Q \omega_{Q'} =$ some $\omega_{QQ'}$ whenever  $(Q, Q') = 1$. For the sake of clarity we also set $\omega_p := \omega_{[N]_p}$ for each prime $p \mid N$. Finally for any integer $n > 0$ we consider the matrix
\[S_n := \left(\begin{matrix}
1 & 1/n \\
0 & 1
\end{matrix}\right),\]
which corresponds to a translation by $1/n$.

A theorem of Atkin and Lehner stated without proof in \cite{atkin_lehner} assures that when $N$ is not divisible by $4$ nor $9$ the normalizer of $\Gamma_0(N)$ is generated by $\Gamma_0(N)$ and the Atkin-Lehner involutions $\omega_p$ for primes $p \mid N$. When $N$ is divisible by $4$ or by $9$ one has to include some extra generators: $S_2$ if $[N]_2 = 4$ or $8$, $S_4$ if $[N]_2 = 16$ or $32$ and $S_8$ if $64 \mid N$; and $S_3$ if $9 \mid N$. Note that we are considering the normalizer of $\Gamma_0(N)$ as a group of linear fractional transformations, as otherwise one also needs to include any real multiple of the previous generators. This theorem also provides the structure of the quotient group between the normalizer of $\Gamma_0(N)$ and $\Gamma_0(N)$ itself (which we do not need), although this part seems to have some mistakes and a corrected version is proved by Bars in \cite{bars}.

Asai observed in \cite{asai} that the Atkin-Lehner involutions act transitively on $\mathbb{Q}$ if and only if $N$ is square-free. The following proposition is a generalization of this fact.

\begin{proposition}\label{prop:normalizer}
The normalizer of $\Gamma_0(N)$ acts transitively on $\mathbb{Q}$ if and only if $N = 2^a 3^b N'$ for some $a < 8$, $b < 4$ and a square-free integer $N'$ not divisible by $2$ nor $3$.
\end{proposition}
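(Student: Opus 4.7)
The plan is to analyze the action of $\mathcal{N}(\Gamma_0(N))/\Gamma_0(N)$ on the cusps of $\Gamma_0(N)$, which we parametrize in the usual way by pairs $(c,\bar{a})$ where $c$ is a positive divisor of $N$ and $\bar{a} \in (\mathbb{Z}/\gcd(c, N/c)\mathbb{Z})^{*}/\{\pm 1\}$, with $\infty$ corresponding to $c = 1$. The key observation is that the action on the first coordinate $c$ decouples over the primes: an Atkin--Lehner involution $\omega_Q$ (for an exact divisor $Q$ of $N$) induces the map $v_p(c) \mapsto v_p(N) - v_p(c)$ for $p \mid Q$ and fixes $v_p(c)$ otherwise, whereas the extra translations $S_{2^j}$ and $S_3$ act only on $v_2(c)$ and $v_3(c)$, respectively. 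Thus the proof reduces to a prime-by-prime study of how these generators move the value $v_p(c)$ inside $\{0, 1, \ldots, v_p(N)\}$.

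For the ``only if'' direction this decoupling immediately produces an obstruction when some prime $p \geq 5$ satisfies $v_p(N) \geq 2$: the sole generator acting nontrivially on $v_p(c)$ is $\omega_p$, so the orbit of $0$ is $\{0, v_p(N)\}$ and the cusps with $v_p(c) = 1$ are unreachable from $\infty$. For $p = 2$ with $v_2(N) \geq 8$, the strongest translation provided by the Atkin--Lehner theorem is $S_8$, and a direct calculation shows that the iterates $S_{8}^{k}(0)$ produce cusps with $v_2(c) \in \{0,1,2,3\}$; combining with $\omega_{2^{v_2(N)}}$ mirrors this range to obtain $\{v_2(N)-3,\ldots,v_2(N)\}$, so the reachable set is $\{0,1,2,3\} \cup \{v_2(N)-3,\ldots,v_2(N)\}$, which misses the value $4$ precisely when $v_2(N) = 8$. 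An analogous argument with $S_3$ yields the bound $v_3(N) \leq 3$.

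For the ``if'' direction one inverts the above counting. When $p \mid N'$, the involution $\omega_p$ swaps $v_p(c) \in \{0,1\}$ and nothing more is needed. When $p = 2$ with $v_2(N) = a \leq 7$, the Atkin--Lehner theorem provides an $S_{2^j}$ with $j \geq \lceil (a-1)/2 \rceil$, so that $a \leq 2j+1$; the iterates $S_{2^j}^k(0)$ cover $v_2(c) \in \{0, 1, \ldots, j\}$, a further application of $\omega_{2^a}$ covers $\{a-j, \ldots, a\}$, and these two ranges together exhaust $\{0,1,\ldots,a\}$. The case $p = 3$ with $b \leq 3$ is identical using $S_3$. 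The main obstacle, which I expect to require a somewhat tedious case-by-case inspection, is to verify that the second coordinate $\bar{a}$ is also completely reached: one must track the numerators produced by the iterates of $S_{2^j}$ and $S_3$, show that they range over the relevant residue classes modulo $\gcd(c, N/c)$, and combine this with the numerator action of the Atkin--Lehner involutions to obtain the full set of cusps.
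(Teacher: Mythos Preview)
Your approach and the paper's are close in spirit: both reduce the question to tracking the $p$-adic valuation of the denominator under the normalizer's generators, prime by prime. The paper, however, works directly with rationals $u/v \in \mathbb{Q}$ rather than with $\Gamma_0(N)$-cusp classes. For the ``if'' direction it shows that any $u/v$ can be carried by the normalizer to a rational with $N \mid v$ (hence $\Gamma_0(N)$-equivalent to $\infty$): a single Atkin--Lehner involution $\omega_Q$ fixes the square-free factor $N'$; then $\omega_2$ together with the available $S_{2^j}$ first drives $[v]_2$ down to $1$ and then up to $2^a$; and the same for $3^b$. For the ``only if'' direction the paper exhibits an explicit invariant proper subset of $\mathbb{Q}$---for instance, when $2^8 \mid N$ and $[N]_2 = 2^a$, the rationals whose denominator has $2$-valuation equal to $\lfloor a/2 \rfloor$ or $\lceil a/2 \rceil$.

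Passing to the cusp set buys you nothing here and costs you the second coordinate $\bar{a}$, which you correctly flag as the main obstacle but do not resolve. This is a genuine gap in your ``if'' direction: reaching every value of $v_p(c)$ shows only that each divisor $c \mid N$ occurs among the cusps in the orbit of $\infty$, not that every class $(c,\bar{a})$ does. The paper's formulation never needs to name a cusp class---driving $[v]_p$ up to $[N]_p$ for each $p$ is already the full statement that $u/v$ lies in $\Gamma_0(N)\cdot\infty$. If you insist on your route the $\bar{a}$-check can likely be carried out, but it is extra work that the direct argument avoids. Two smaller corrections: your phrase ``misses the value $4$ precisely when $v_2(N) = 8$'' should read ``for every $v_2(N) \geq 8$''; and in your ``only if'' argument you should phrase things (as the paper does) as exhibiting an invariant set rather than computing an orbit, since you must verify closure under \emph{all} generators acting on arbitrary cusps, not merely list what is reachable from $\infty$.
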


\begin{proof}
Assume first that $N$ is of the prescribed form and let $u/v$ be an arbitrary rational number, $(u, v) = 1$. It suffices to show that $u/v$ is related modulo the normalizer to some $u'/v'$ with $(u', v')= 1$ and $N \mid v'$, as these rationals comprise the orbit of $\infty$ modulo $\Gamma_0(N)$. We do this by stages, first relating it to a rational whose denominator is divisible by $N'$, then adding $2^a$ and finally $3^b$.

Write $N' = p_1 \cdots p_n$ for distinct primes $p_1, \ldots, p_n$. We may assume upon reordering of the $p_i$ that $p_1 \cdots p_m \mid v$ and $p_i \nmid v$ for $m < i \leq n$. Choosing $Q = 2^a 3^b p_{m+1} \cdots p_n$ we have
\[u'/v' = \omega_Q(u/v) = \frac{Q xu + yv}{N\left(zu + w\frac{v}{N/Q}\right)}.\]
The numerator of the right hand side is not divisible by any of the $p_i$ as a consequence of the determinant condition imposed on $\omega_Q$ and therefore $N' \mid v'$.

Hence assume that from the beginning $N' \mid v$. This divisibility property is preserved by $\omega_2$, $S_2$, $S_4$ and $S_8$. We show now we may find a related $u'/v'$ with $2^a N' \mid v'$. Let $2^s = [v]_2$ and assume that $s < a$, since otherwise we are finished. It is easy to check that if $u'/v' = \omega_2(u/v)$ then $[v']_2 = 2^{a-s}$. This means that applying $\omega_2$ if necessary we may assume $s \leq [a/2]$. We now apply repeatedly $S_2$, $S_4$ or $S_8$ to arrive to a rational with $s = 0$, and the image of this rational by $\omega_2$ satisfies $s \geq a$.

The same argument can now be applied \emph{mutatis mutandis} to add the factor $3^b$ to the denominator. This finishes the proof of the direct implication.

To prove that the normalizer action is not transitive when $N$ is not of the prescribed form it suffices to show a proper subset of $\mathbb{Q}$ invariant under this action. Suppose first that for some prime $p \neq 2, 3$ we have $p^2 \mid N$ and $p^c = [N]_p$. Then one such set is that of the rational numbers $u/v$ with $[v]_p = p^s$ and $0 < s < c$. The invariance of this set follows from the following facts: the translations and the Atkin-Lehner involutions $\omega_Q$ with $p \nmid Q$ leave $[v]_p$ invariant, while $[v']_p = p^{c-s}$ for $u'/v' = \omega_Q(u/v)$ with $p \mid Q$.

The remaining cases are $2^8 \mid N$ or $3^4 \mid N$. If $2^8 \mid N$ then $a \geq 8$ and one such set is that of the rational numbers $u/v$ with $[v]_2 = 2^{a/2}$ if $a$ is even and $[v]_2 = 2^{[a/2]}$ or $[v]_2 = 2^{[a/2] + 1}$ if $a$ is odd. An analogous set works when $3^4 \mid N$.
\end{proof}

If a cusp form $f$ satisfies $f|_\sigma = C_\sigma f$, where $C_\sigma$ is a real constant, for every $\sigma$ lying in the normalizer of $\Gamma_0(N)$, then we may guarantee the approximate functional equation \eqref{eq:cusp_pattern} to exist around every rational number in the orbit of $\infty$ modulo this normalizer. If the action of the normalizer is also transitive on $\mathbb{Q}$ then the equation exists for every rational number. Suppose now that $f$ is a newform (for the precise definition see \cite[\S9.4]{rankin}, for example). Atkin and Lehner proved in \cite{atkin_lehner} that $f|_{\omega_p} = \pm f$ for every prime $p \mid N$. In the same paper they also prove that when $4 \mid N$ all the even coefficients of $f$ vanish, and therefore $f|_{S_2} = -f$. If these transformations suffice to generate the normalizer, then the previous remarks apply.

When we have to include $S_3$, $S_4$ or $S_8$ to generate the normalizer, however, this breaks down, as it is not generally true that $f|_{S_n} = Cf$ for a real constant $C$. A workaround exists when the space of cuspidal forms has dimension $1$. In this case, $f|_\eta$ is again a constant multiple of $f$ for any $\eta$ in the normalizer of $\Gamma_0(N)$, and therefore all these matrices commute under the action of the slash operator. As a consequence, $f|_\eta = f|_{\omega_Q S} = \pm f|_S$ for some $Q \mid N$ and some traslation $S$. The matrix $\sigma = \eta S^{-1}$ now lies in the normalizer of $\Gamma_0(N)$ and satisfies $\sigma(\infty) = \eta(\infty)$ and $f|_\sigma = \pm f$. Therefore, if the normalizer acts transitively on $\mathbb{Q}$, so does the subgroup consisting of those matrices $\sigma$ for which $f|_\sigma = \pm f$.

We conclude that the following are sufficient conditions to ensure that there is an approximate functional equation \eqref{eq:cusp_pattern} around every rational number: $N = 2^a N'$ with $a < 4$ and $N'$ odd and square-free, or if the space of cusp forms on $\Gamma_0(N)$ has dimension $1$ and $N = 2^a 3^b N'$ with $a < 8$, $b < 4$ and $N'$ square-free and not divisible by $2$ nor $3$.

\begin{figure}
\centering
\includegraphics[width=0.9\textwidth]{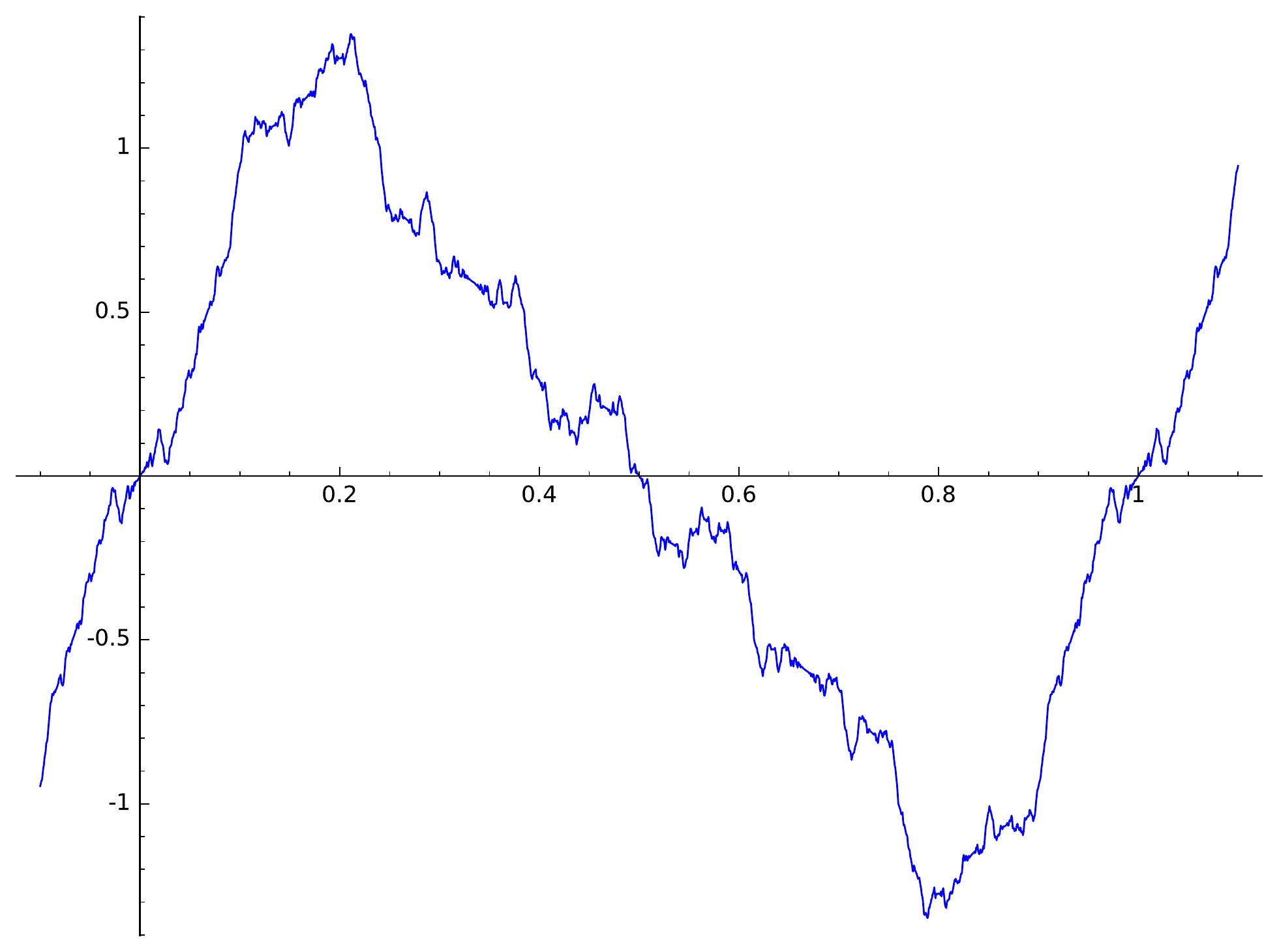}
\caption{Plot of $\Im f_{7/4}$ where $f$ is the newform on $\Gamma_0(45)$.}
\label{fig:newform45}
\end{figure}

\begin{figure}
\centering
\includegraphics[width=0.45\textwidth]{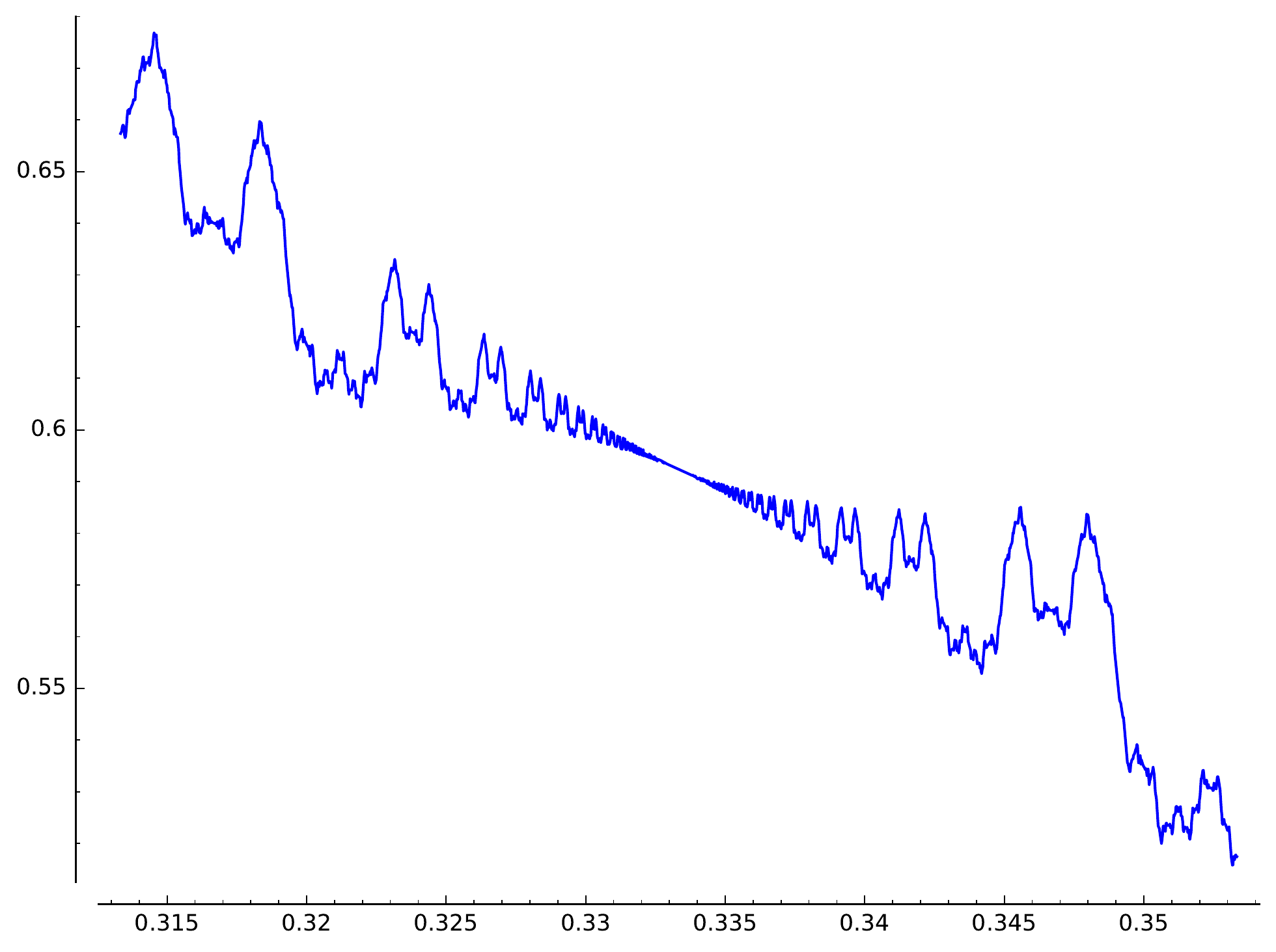}
\includegraphics[width=0.45\textwidth]{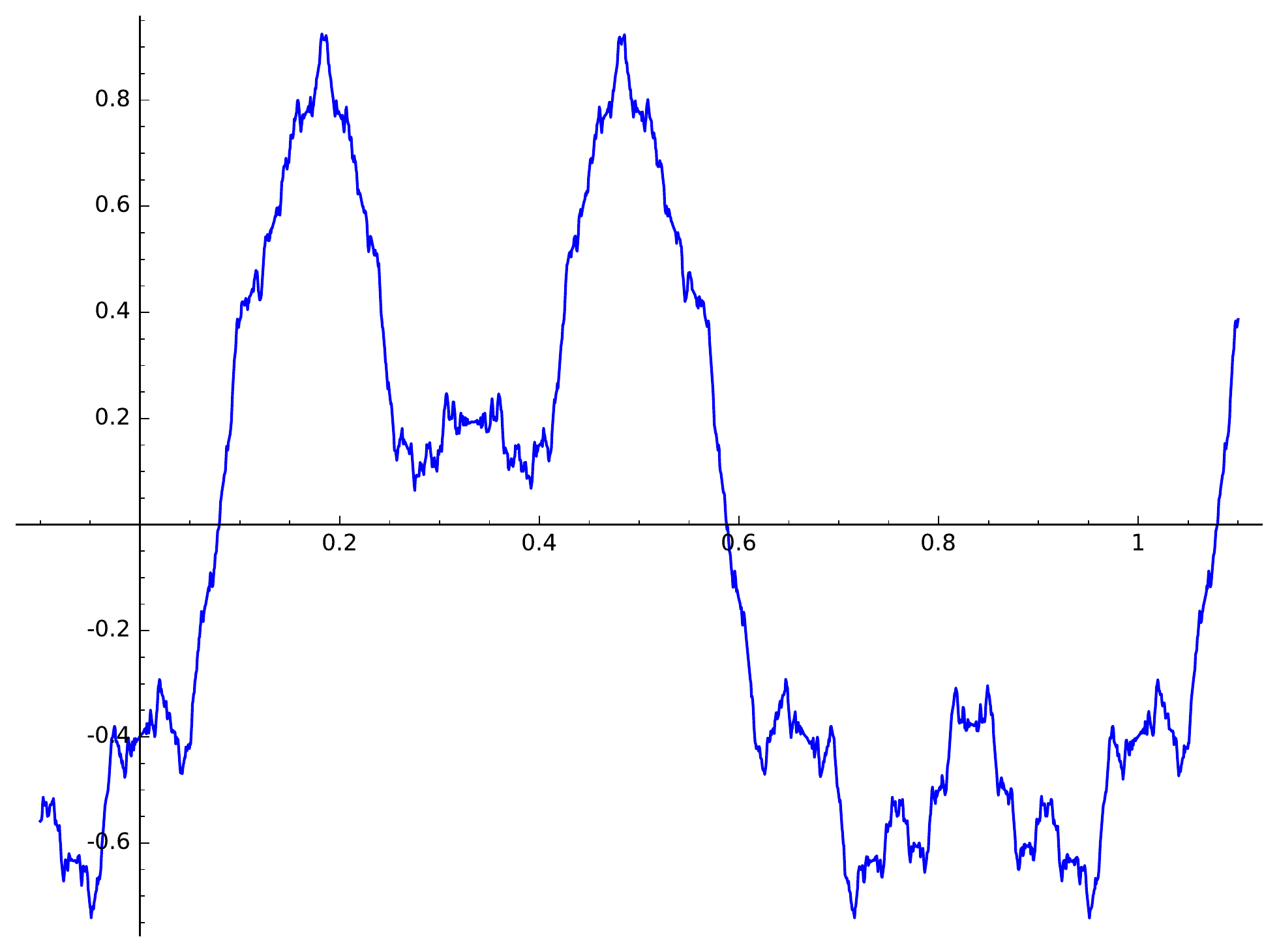}
\caption{Left: Detail of $\Im f_{7/4}$ around $1/3$ where $f$ is the newform on $\Gamma_0(45)$. Right: Graph of the imaginary part of the right hand side of (\ref{eq:elliptic_example}).}
\label{fig:e45_patterns}
\end{figure}

%\looseness=-1 % ADJUSTMENT
We now give some examples of modular forms for which an equation like \eqref{eq:cusp_pattern}, relating $g$ to itself, is unlikely to exist around some rational numbers. These are of weight $2$ and therefore associated to modular abelian varieties over $\mathbb{Q}$. By direct examination of the table of newforms found at \cite{lmfdb} we see that the lowest value of $N$ for which neither of the previous conditions is satisfied is $N = 45$, as the associated space of cusp forms happens to be of dimension $3$, containing an oldclass generated by the newform on $\Gamma_0(15)$. Denote by $f$ the newform on $\Gamma_0(45)$ and by $h$ the one on $\Gamma_0(15)$. These are associated to the isogeny classes of the elliptic curves
\[y^2 + xy = x^3 - x^2 - 5 \quad \text{ and } \quad y^2 + xy + y = x^3 + x^2,\]
respectively. The matrix $\sigma = S_3 \omega_{45}$, where $\omega_{45}$ is the Atkin-Lehner involution determined by $x = w = 0$, $y = 1$ and $z = -1$, lies in the normalizer of $\Gamma_0(45)$ and sends $\infty$ to $1/3$. The function $f|_\sigma$ is therefore again a modular form for $\Gamma_0(45)$, and in fact it has the following decomposition:
\[f|_\sigma(z) = \frac{1}{2} f(z) - i \frac{1}{2\sqrt{3}} h(z) - i \frac{3 \sqrt{3}}{2} h(3z).\]
To obtain the coefficients one first decomposes $f|_{S_3}$ by directly comparing coefficients, and then applies $|_{\omega_{45}}$. The Atkin-Lehner eigenvalues are tabulated in \cite{lmfdb}, and the action of this operator on oldforms is described by lemma~26 of \cite{atkin_lehner}. As an immediate consequence
\begin{equation}\label{eq:elliptic_example}
f_\alpha^\sigma(x) = \frac{1}{2} f_\alpha(x) - \frac{i}{2\sqrt{3}} h_\alpha(x) - \frac{i}{2 \cdot 3^{\alpha - 3/2}} h_\alpha(3x).
\end{equation}
In figure~\ref{fig:newform45} we have plotted $g = \Im f_{7/4}$, while in figure~\ref{fig:e45_patterns} the reader can compare the imaginary part of the right hand side of (\ref{eq:elliptic_example}) for $\alpha = 7/4$ with aspect of the graph of $g$ near $\sigma(\infty) = 1/3$.

The lowest value of $N$ for which the normalizer is not transitive on $\mathbb{Q}$ and there is some nonzero newform is $N = 49$. This newform is associated to the isogeny class of the curve
\[y^2 + xy = x^3 - x^2 - 2x - 1.\]
The cusp $1/7$ is not related to $\infty$, not even by the normalizer, and in figure~\ref{fig:newform49} the reader can appreciate how for $g = \Re f_{7/4}$ the aspect of the repeating pattern around $1/7$ and that of the global graph seem to differ, making it unlikely for a self-similarity relation like \eqref{eq:cusp_pattern} to hold.

To finish this section we note that although Jacobi's theta function $\theta$ is usually presented as the modular form $\theta(2z)$ for the group $\Gamma_0(4)$, the previous analysis would not apply because it is of half-integer weight and the multiplier system is not trivial.

\begin{figure}
\centering
\includegraphics[width=0.45\textwidth]{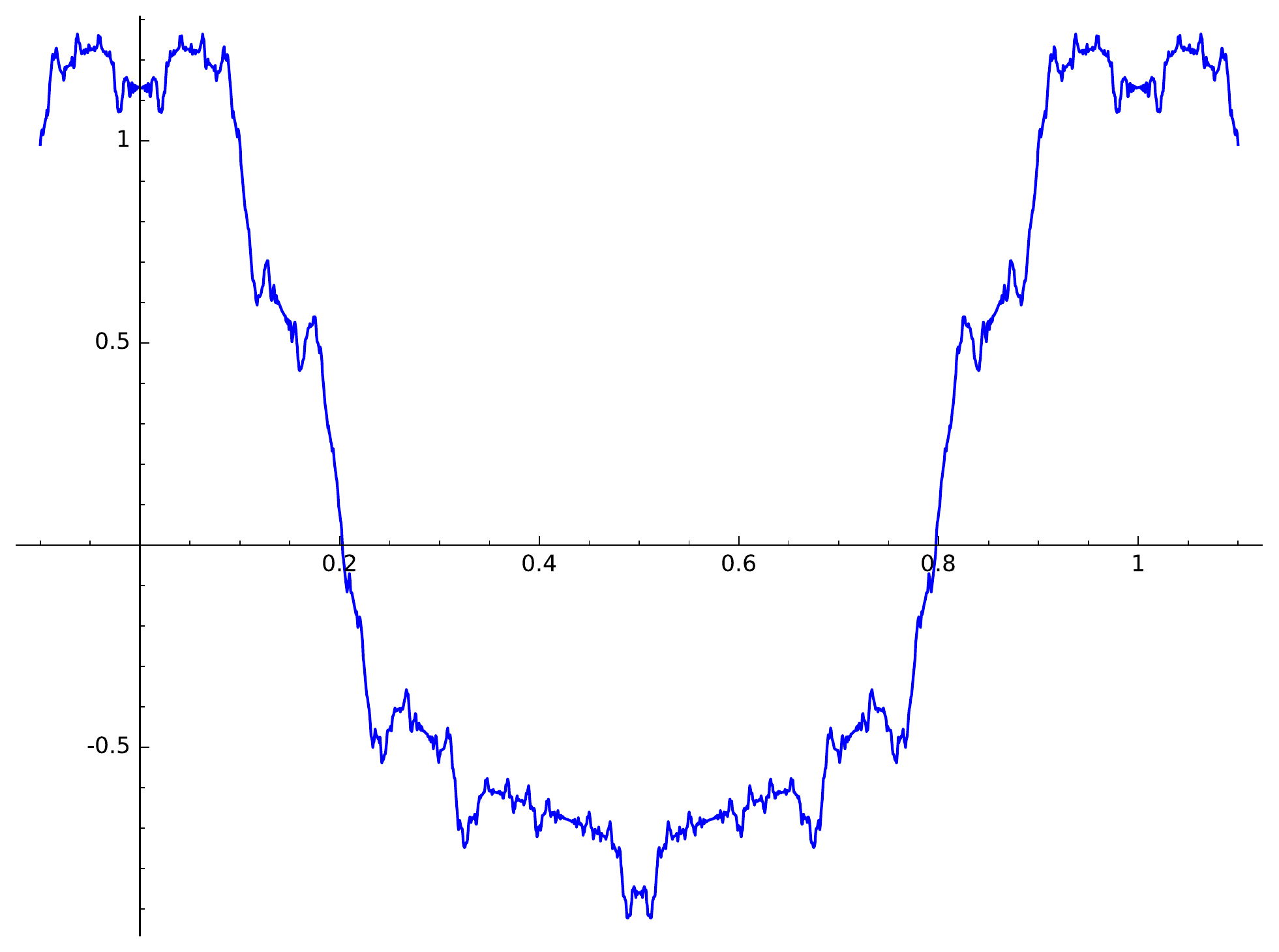}
\includegraphics[width=0.45\textwidth]{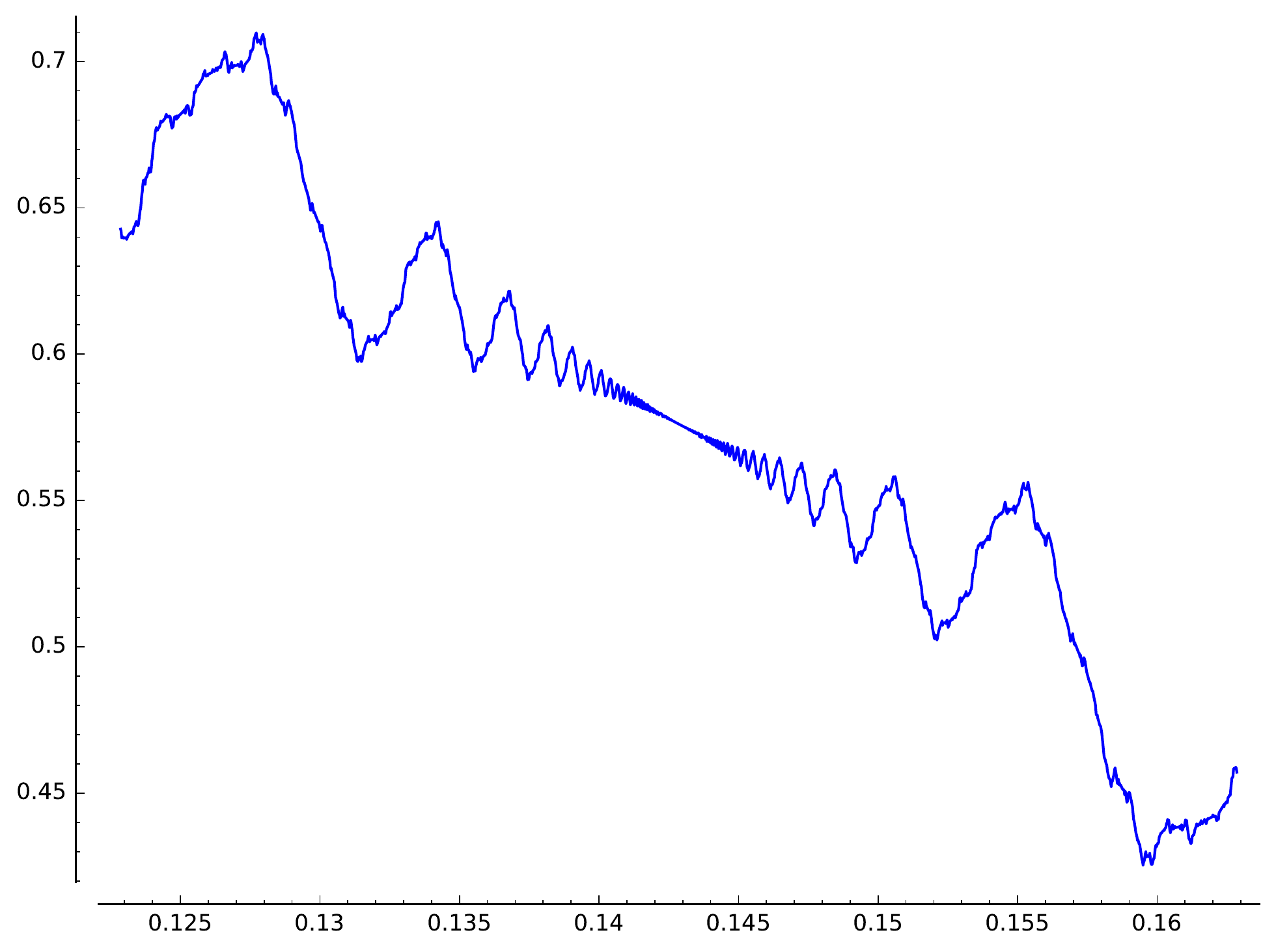}
\caption{Left: Plot of $\Re f_{7/4}$ where $f$ is the newform on $\Gamma_0(49)$. Right: Detail around $1/7$.}
\label{fig:newform49}
\end{figure}

\section*{Acknowledgments}

The author is indebted to Fernando Chamizo for his encouragement and suggestions during the elaboration of this article. This work has been supported by the ''la Caixa''-Severo Ochoa international PhD programme at the Instituto de Ciencias Matemáticas (CSIC-UAM-UC3M-UCM).

The graphics included in the article have been plotted using \emph{SageMath} \cite{sage}, and the same software system has been used to compute the Fourier coefficients of newforms. The partial sums were calculated using simple {\CC} programs.


\begin{thebibliography}{11}

\bibitem{asai}
T. Asai. \emph{On the Fourier coefficients of automorphic forms at various cusps and some applications to Rankin's convolution.} J. Math. Soc. Japan, 28(1):48--60, 1976.

\bibitem{atkin_lehner}
A. O. L. Atkin, J. Lehner. \emph{Hecke operators on $\Gamma_0(m)$.} Math. Ann., 185:134--160, 1970.

\bibitem{bars}
F. Bars. \emph{The group structure of the normalizer of $\Gamma_0(N)$.} arXiv:math/0701636v1.

%\bibitem{bringmann_rolen}
%K. Bringmann, L. Rolen. \emph{Half-integral weight Eichler integrals and quantum modular forms.} arXiv:1409.3781v4, 2015.

\bibitem{butzer_stark}
P. I. Butzer, E. I. Stark. \emph{``Riemann's example'' of a continous nondifferentiable function in the light of two letters (1865) of Christoffel to Prym.} Bull. Soc. Math. Belg., 38:45--73, 1986.

\bibitem{chamizo}
F. Chamizo. \emph{Automorphic Forms and Differentiability Properties.} Trans. Amer. Math. Soc., 356(5):1909--1935 (electronic), 2004.

\bibitem{chamizo_petrykiewicz_ruiz}
F. Chamizo, I. Petrykiewicz, S. Ruiz-Cabello. \emph{The Hölder exponent of some Fourier series.} J. Fourier Anal. Appl., 23(4):758--777, 2017.

\bibitem{duistermaat}
J. J. Duistermaat. \emph{Selfsimilarity of ``Riemann's Nondifferentiable Function''.} Nieuw Arch. Wisk., 9(3):303--337, 1991.

\bibitem{eichler}
M. Eichler. \emph{Eine Verallgemeinerung der Abelschen Integrale.} Math. Z. 67:267--298, 1957.

%\bibitem{falconer}
%K. Falconer. \emph{Fractal geometry.} John Wiley and sons, 2003.

\bibitem{ford}
J. R. Ford. \emph{Fractions.} Amer. Math. Monthly, 45(9):586--601, 1938.

\bibitem{gerver}
J. Gerver. \emph{The differentiability of the Riemann function at certain rational multiples of $\pi$.} Amer. J. Math., 92:33--55, 1970.

\bibitem{gerver2}
J. Gerver. \emph{More on the differentiability of the Riemann function.} Am. J. Math., 93(1):33--41, 1971.

\bibitem{hardy}
G. H. Hardy. \emph{Weierstrass's nondifferentiable function.} Trans. Amer. Math. Soc., 17(3):301--325, 1916.

\bibitem{holschneider_tchamitchian}
M. Holschneider, Ph. Tchamitchian. \emph{Pointwise analysis of Riemann's ``nondifferentiable'' function.} Invent. Math., 105(1):157--175, 1991.

\bibitem{iorio_iorio}
R. Iorio, V. Iorio. \emph{Fourier Analysis and Partial Differential Equations.} Vol. 70 of \emph{Cambridge Stud. Adv. Math.}. Cambridge Univ. Press, 2001.

\bibitem{iwaniec}
H. Iwaniec. \emph{Topics in Classical Automorphic Forms.} Vol. 17 of \emph{Graduate Studies in Mathematics}, Amer. Math. Soc., 1997.

\bibitem{jaffard}
S. Jaffard. \emph{The spectrum of singularities of Riemann's function.} Rev. Mat. Iberoamericana, 12(2):441--460, 1996.

\bibitem{jaffard2}
S. Jaffard. \emph{Local behavior of Riemann's function.} In \emph{Harmonic analysis and operator theory (Caracas, 1994)}, vol. 189 of \emph{Contemp. Math.}, pp 287--307. Amer. Math. Soc, 1995.

\bibitem{jarnik}
V. Jarník. \emph{Über die simultanen diophantischen Approximationen.} Math. Z., 33(1):505--543, 1931.

\bibitem{lmfdb}
The LMFDB Collaboration. \emph{The L-functions and Modular Forms Database.} \url{http://www.lmfdb.org}, 2013. [Online; accessed 4 March 2016].

\bibitem{patterson}
S. J. Patterson. \emph{Diophantine approximation in Fuchsian groups.} Phil. Trans. R. Soc. Lond. A, 262:527--563, 1976.

\bibitem{rankin}
R. A. Rankin. \emph{Modular forms and functions.} Cambridge Univ. Press, 1977.

\bibitem{sage}
\emph{{S}ageMath, the {S}age {M}athematics {S}oftware {S}ystem ({V}ersion
  8.0)}, The Sage Developers, 2017, {\tt http://www.sagemath.org}.

\bibitem{seuret_vehel}
S. Seuret, J. L. Véhel. \emph{The local Hölder function of a continuous function.} Appl. Comput. Harmon. Anal., 13(3):263--276, 2002.

\bibitem{velani}
S. L. Velani. \emph{Diophantine approximation and Hausdorff dimension in Fuchsian groups.} Math. Proc. Cam. Phil. Soc., 113:343--354, 1993.

\bibitem{weierstrass}
K. Weierstrass. \emph{Über continuierliche Functionen eines reellen Arguments, die für keinen Werth des letzteren einen bestimmten differentialquotienten besitzen.} In \emph{Mathematische Werke II}, pp 71-74. Königl. Akad. Wiss., 1872.

\bibitem{whittaker_watson}
E. T. Whittaker, G. N. Watson. \emph{A course in modern analysis.} Cambridge Univ. Press, 1915.

\bibitem{zygmund}
A. Zygmund. \emph{Trigonometric series.} Vol I, II. Cambridge Univ. Press, 2002.
\end{thebibliography}
\end{document}